\documentclass{amsart}
\usepackage{amssymb}
\usepackage{amsfonts}
\usepackage[all]{xy}
\usepackage{amssymb}
\usepackage{amsmath}
\usepackage{amsthm}
\usepackage{enumerate}
\usepackage{tabularx}
\usepackage{pdfpages}
\usepackage{centernot}
\usepackage{mathtools}
\usepackage{amsthm,amssymb}
\usepackage{etoolbox}
\AtBeginEnvironment{proof}{\setlength{\parindent}{0pt}}
\usepackage{url}
\usepackage{tikz}
\usepackage{amssymb}
\usetikzlibrary{matrix}
\usepackage{tikz-cd}
\usepackage{tikz}
\usepackage{marginnote}
\definecolor{mygray}{gray}{0.85}
\usepackage[backgroundcolor=mygray,colorinlistoftodos,prependcaption,textsize=small]{todonotes}
\usepackage{xargs}

\newcommand{\mrm}[1]{\mathrm{#1}}

\renewcommand{\leq}{\leqslant}
\renewcommand{\geq}{\geqslant}

\newcommand{\cupdot}{\mathbin{\dot{\cup}}}

\makeatletter
\def\subsection{\@startsection{subsection}{3}%
  \z@{.5\linespacing\@plus.7\linespacing}{.3\linespacing}%
  {\bfseries\centering}}
\makeatother

\makeatletter
\def\subsubsection{\@startsection{subsubsection}{3}%
  \z@{.5\linespacing\@plus.7\linespacing}{.3\linespacing}%
  {\centering}}
\makeatother

\makeatletter
\def\myfnt{\ifx\protect\@typeset@protect\expandafter\footnote\else\expandafter\@gobble\fi}
\makeatother

\newcommand{\cC}{\mathcal{C}}

\newcommand{\Op}{\mathcal{O}}
\newcommand{\isom}{\cong}
\newcommand{\tensor}{\otimes}

\newcommand{\dis}{\displaystyle}

\renewcommand{\restriction}{ {\upharpoonright} }
\newcommand*{\tp}{\mathrm{tp}}
\newcommand*{\End}{\mathrm{End}}
\newcommand*{\Cl}{\mathrm{Cl}}
\newcommand*{\Pun}{\mathrm{Pun}}

\newcommand*{\op}{\mathrm{op}}
\newcommand*{\Th}{\mathrm{Th}}
\newcommand*{\Zg}{\mathrm{Zg}}
\newcommand*{\gra}{\alpha}
\newcommand*{\grb}{\beta}
\newcommand*{\grd}{\delta}
\newcommand*{\grf}{\varphi}
\newcommand*{\grg}{\gamma}

\newcommand*{\grD}{\Delta}
\newcommand*{\grl}{\lambda}

\newcommand*{\gro}{\omega}

\newcommand*{\bfa}{{\bf a}}
\newcommand*{\bfb}{{\bf b}}
\newcommand*{\bfu}{{\bf u}}
\newcommand*{\bfv}{{\bf v}}
\newcommand*{\bfw}{{\bf w}}

\newcommand*{\bfzero}{{\bf 0}}
\newcommand*{\RMod}{R\mbox{-}\mathrm{Mod}}
\newcommand*{\RProj}{R\mbox{-}\mathrm{Proj}}
\newcommand*{\Ab}{\mathrm{Ab}}

\newtheorem{theorem}{Theorem}[section]

\usepackage[colorlinks,citecolor=blue,urlcolor=blue, linkcolor=blue]{hyperref}
\newtheorem{corollary}[theorem]{Corollary}
\newtheorem{lemma}[theorem]{Lemma}
\newtheorem{proposition}[theorem]{Proposition}
\newtheorem{example}[theorem]{Example}

\newtheorem{fact}[theorem]{Fact}
\newtheorem*{repeatedresult}{\repeatedresultheading}
\newenvironment{restatement}[2]
  {\def\repeatedresultheading{#1~\ref{#2}}\begin{repeatedresult}}
  {\end{repeatedresult}}

\theoremstyle{definition}

\newtheorem{definition}[theorem]{Definition}
\newtheorem{remark}[theorem]{Remark}

\newtheorem{observation}[theorem]{Observation}
\newtheorem{notation}[theorem]{Notation}

\usepackage[backgroundcolor=mygray,colorinlistoftodos,prependcaption,textsize=small]{todonotes}
\usepackage{xcolor}

\newcommand{\pureindep}[1][]{%
  \mathrel{
    \mathop{
      \vcenter{
        \hbox{\oalign{\noalign{\kern-.3ex}\hfil$\vert$\hfil\cr
              \noalign{\kern-.7ex}
              $\smile$\cr\noalign{\kern-.3ex}}}
      }
    }\displaylimits_{#1}
  }
}

\numberwithin{equation}{section}

\begin{document}

\begin{abstract}
Let $R$ be a countable ring. Given an $R$-module $A$, we call a decomposition
$A = \bigoplus_{i \in I} N_i$ a \emph{Kaplansky decomposition} if each $N_i$ is
countable. We characterize the uncountable Polish $R$-modules that admit a
Kaplansky decomposition: they are exactly the modules of the form
$B \oplus M^{\omega}$, where $B$ and $M$ are countable and $M$ is
$\Sigma$-algebraically compact. The countable summand $B$ may moreover be taken
to be an elementary submodule satisfying a closure condition, which makes
$M^{\omega}$ unique up to isomorphism, and hence an invariant of $A$.
We use this to characterize the countable rings admitting a free uncountable
Polish $R$-module, generalizing results of Shelah and Solecki. This class of
rings has a purely ring-theoretic description: it consists exactly of the
countable left perfect and right coherent rings, i.e.\ the rings identified by
Chase's theorem on products of projective modules. We observe that these are
also exactly the countable $F$-rings, i.e.\ those countable rings $R$ for which $R^{\omega}$ is
free. Finally, we give a ring-theoretic characterization of the countable rings
$R$ for which there exists an uncountable projective Polish $R$-module.
\end{abstract}

\title{Kaplansky decompositions of Polish modules}

\thanks{No. 1279 on Shelah's publication list. Research of G. Paolini was  supported by project PRIN 2022 ``Models, sets and classifications", prot. 2022TECZJA, and by INdAM Project 2024 (Consolidator grant) ``Groups, Crystals and Classifications''. INdAM Project 2024 also supported a summer visit connected with this project. Research of S. Shelah was supported by SF 2320/23: The Israel Science Foundation (ISF) (2023-2027).}

\author{Ivo Herzog}
\address{Department of Mathematics, The Ohio State University at Lima, 4240 Campus Drive, Lima, OH 45804, USA.}
\email{herzog.23@osu.edu}

\author{Gianluca Paolini}
\address{Department of Mathematics ``Giuseppe Peano'', University of Torino, Via Carlo Alberto 10, 10123 Torino, Italy.}
\email{gianluca.paolini@unito.it}

\author{Saharon Shelah}
\address{Einstein Institute of Mathematics, The Hebrew University of Jerusalem, Edmond J. Safra Campus, Givat Ram, Jerusalem 9190401, Israel \and Department of Mathematics, Rutgers University, Hill Center, 110 Frelinghuysen Road, Piscataway, NJ 08854-8019, USA.}
\email{shelah@math.huji.ac.il}

\makeatletter
\patchcmd{\@maketitle}{\global\topskip42\p@\relax}
  {\global\topskip24\p@\relax}{}{}
\patchcmd{\@setauthors}{\@topsep30\p@\relax}
  {\@topsep20\p@\relax}{}{}
\patchcmd{\@maketitle}{\dimen@34\p@}
  {\dimen@22\p@}{}{}
\makeatother
\date{\today}
\maketitle

\setcounter{tocdepth}{2}
\vspace{-1.5em}
\tableofcontents

\newpage 
\section{Introduction}
\begingroup
\setlength{\parskip}{\smallskipamount}

The question of whether algebraically free objects can carry Polish group
topologies arises in descriptive set theory. In 1997, D.\ Evans asked whether an
uncountable free group could be non-Archimedean Polish~\cite[Introduction, footnote~1]{1121}, while H.\ Becker and
A.\ S.\ Kechris asked the corresponding question without the
non-Archimedean assumption~\cite{BeckerKechris}; see also~\cite[Introduction]{Shelah2011}. Shelah answered these questions negatively
\cite{Shelah2003,Shelah2011}, and Solecki proved that no uncountable free
abelian group admits a Polish group topology \cite{Solecki}. Both nonexistence results also follow from Dudley's earlier automatic
continuity theorem \cite{Dudley}, which forces any completely metrizable group
topology on a free or free abelian group to be discrete. These consequences in the Polish setting were already noted by Rosendal~\cite[pp.~198--199, Theorem~3.2 and Corollary~3.3]{Rosendal}. These results form part of a line of work in
which topological and descriptive-set-theoretic methods impose strong
restrictions on the underlying algebraic structure of a Polish group or module~\cite{PS17,1121,Rosendal}. More recently, Bogopolski and Corson~\cite{BogopolskiCorson} proved related impossibility results for acylindrically hyperbolic groups; in particular, such groups admit no nondiscrete completely metrizable group topology.

Recall that a Polish $R$-module is an $R$-module endowed with a separable completely metrizable topology for which addition and scalar multiplication are continuous. It is non-Archimedean if the underlying topological group has a countable neighborhood basis at the identity consisting of open subgroups. We write $\mathfrak{c}=2^{\aleph_0}$ for the cardinality of the continuum.

A largely separate line of research in module theory studied the behavior of
direct products and direct-sum decompositions. Products of modules rarely
decompose as direct sums, and one may ask more generally when an additive
subcategory of modules is closed under products. Beginning with the work of
Chase~\cite{chase,chase1} and Zimmermann-Huisgen~\cite{ZH80}, and continuing
in, among others, \cite{kra_sao,okoh,ON84,oneill}, this program led to structural
criteria involving coherence, perfectness, and the theory of $F$- and
$P$-rings.
In this terminology, an $F$-ring is a ring $R$ for which the $R$-module $({}_{R}R)^\omega$ is free, while a $P$-ring is one for which $({}_{R}R)^\omega$ is projective; see also~\cite{chase,chase1,lenzing,ON84,oneill} on this topic.

 The present paper brings these two lines of research together. The bridge is
that a countable product of countable discrete modules carries a natural non-Archimedean Polish $R$-module topology, whereas a decomposition of a Polish module into countable summands
can be constrained by Baire-category and descriptive-set-theoretic arguments.
Positive-primitive formulae and the model theory of modules translate these
topological constraints into algebraic information about decompositions. The main contribution of the present paper is to carry out this translation: we characterize Polish modules admitting a direct-sum decomposition into countable summands and resolve the existence questions for free and projective Polish modules over countable rings. In particular, we extend the free-abelian problem settled by Solecki~\cite{Solecki} to free modules over arbitrary countable rings.

 Soon after Kaplansky~\cite{Kap} proved that every projective $R$-module is a direct sum of countably generated ones, Chase~\cite[Theorem~3.3]{chase} characterized the rings for which arbitrary products of projective modules are projective: they are exactly the right coherent, left perfect rings. These conditions have useful model-theoretic interpretations. Right coherence means that the pp-definable subgroups of ${}_{R}R$ in one variable are precisely its finitely generated right ideals~\cite[Theorem~14.16]{prest_first_book}. Under this hypothesis, the Bass--Bj\"ork minimum condition on finitely generated right ideals identifies left perfectness with $\Sigma$-algebraic compactness of ${}_{R}R$~\cite{Bass,Bjork}.

 The purpose of this paper is to establish that analogous properties hold for an uncountable Polish module ${_R}A$ over a countable ring $R$, assuming there exists a direct-sum decomposition ${_R}A = \bigoplus_{i \in I} \; A_i$ into countable summands; we call such a decomposition a \textbf{Kaplansky decomposition}\footnote{Throughout, the summands in indexed direct sums are assumed to be nonzero.}.
 
Informally, our main result says that the uncountable Polish modules with a Kaplansky decomposition are exactly those of the form $B\oplus M^\omega$, where $B$ and $M$ are countable and $M$ is nonzero and $\Sigma$-algebraically compact. The last condition means that descending chains of pp-definable subgroups stabilize. Thus, up to a countable direct summand, these modules have the algebraic form of a countable product of one countable module. The theorem concerns the underlying module; its given Polish topology need not be a product topology.

Our first main result is the following descending chain condition (DCC) criterion.

\begin{theorem}\label{Thm_DCC}
	Let ${_R}A =  \bigoplus_{i \in I} \; A_i$ be a Kaplansky decomposition of an uncountable Polish module over a countable ring $R$. There is a partition of the index set $I = I_1 \cupdot I_2$ with $I_1$ countable such that the uncountable direct summand $U$ of 
\begin{equation} \label{decomp 1}
A = (\bigoplus_{i \in I_1} \; A_i) \; \oplus \; (\bigoplus_{i \in I_2} \; A_i) = B \oplus U
\end{equation}
is $\Sigma$-algebraically compact. 
\end{theorem}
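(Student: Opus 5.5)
The plan is to reduce $\Sigma$-algebraic compactness of $U$ to the descending chain condition on pp-definable subgroups in one variable, and to play two facts against each other: the Baire-category behaviour of pp-definable subgroups of $A$, and the fact that every element of $\bigoplus_{i\in I}A_i$ has finite support.

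\textbf{Step 1 (topology of pp-subgroups).} Since $R$ is countable, there are only countably many pp formulas. For a pp formula $\varphi(x)=\exists \bar y\,\theta(x,\bar y)$, the solution set of $\theta$ in $A^{1+n}$ is a closed subgroup, hence Polish. So $\varphi(A)$ is the image of a Polish group under a continuous homomorphism, and is therefore a Polishable subgroup of $A$. If $\psi\leq\varphi$, then $\psi(A)$ is an analytic subgroup of the Polishable group $\varphi(A)$, so it has the Baire property there. Pettis' theorem then gives a dichotomy: either $\psi(A)$ is open in $\varphi(A)$, so $[\varphi(A):\psi(A)]\leq\aleph_0$, or $\psi(A)$ is meager in $\varphi(A)$, so $[\varphi(A):\psi(A)]=\mathfrak c$.

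\textbf{Step 2 (choosing $I_1$).} pp-subgroups commute with direct sums, so
\[
\varphi(A)/\psi(A)\cong\bigoplus_{i\in I}\varphi(A_i)/\psi(A_i).
\]
Hence when the index is countable, only countably many $i$ satisfy $\varphi(A_i)\neq\psi(A_i)$. Let $I_1$ be the union, over the countably many pairs $\psi\leq\varphi$ with countable index, of these exceptional index sets. Set $B=\bigoplus_{I_1}A_i$ and $U=\bigoplus_{I_2}A_i$. Then for every pair $\psi\leq\varphi$, either $\varphi(U)=\psi(U)$, or $\varphi(A)/\psi(A)$ has size $\mathfrak c$ and $\psi(A)$ is meager in $\varphi(A)$. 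The countable set $B$ is harmless.

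\textbf{Step 3 (the contradiction, and the main obstacle).} Suppose $\varphi_0(U)\supsetneq\varphi_1(U)\supsetneq\cdots$, with the $\varphi_n$ pp formulas and, after taking conjunctions, $\varphi_{n+1}\leq\varphi_n$. By Step 2, each $\varphi_{n+1}(A)$ is meager in $\varphi_n(A)$. Fix compatible complete metrics $d_n$ on the Polishable groups $\varphi_n(A)$; the topologies get finer as $n$ grows. I will build $a_n\in\varphi_n(A)$ with $d_k(a_n,0)<2^{-n}$ for all $k\leq n$. Then the tail sums $\sum_{m\geq n}a_m$ converge in $\varphi_n(A)$, and the full sum $x=\sum_n a_n$ exists in $A$, with $x-(a_0+\dots+a_{n-1})\in\varphi_n(A)$ for every $n$.

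The limit $x$ has finite support $F\subseteq I$. The aim is to choose the $a_n$ so that this is impossible. Specifically, at stage $n$ I will require the partial sum to avoid the countably many meager sets
\[
\textstyle\bigl(\bigoplus_{i\in F'}A_i\bigr)+\varphi_{n+1}(A)
\]
in $\varphi_n(A)$, where $F'$ ranges over finite subsets of a countable set $J$ of indices fixed in advance. These sets are meager because $\bigoplus_{F'}A_i$ is countable and $\varphi_{n+1}(A)$ is meager in $\varphi_n(A)$. The smallness requirement only restricts $a_n$ to a nonempty open set, so the Baire category theorem lets me meet both conditions.

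The delicate point, which I expect to be the main obstacle, is to do this bookkeeping so that it actually prevents $x\in\bigoplus_F A_i+\varphi_n(A)$ for all $n$ simultaneously. The simplest route is a fusion argument: take $J$ to be a countable set containing the supports of all the $a_n$, and arrange that no finite $F\subseteq J$ works at every level. If this fails, I would instead apply Baire category directly to the Polish space of convergent sequences $(a_n)$ built above, showing that the set of sequences whose limit has support contained in a fixed finite $F$ is meager. This yields that $\Sigma$-DCC holds in $U$, so $U$ is $\Sigma$-algebraically compact.
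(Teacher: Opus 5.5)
Your Steps 1 and 2 are sound. Step 2 is a neat replacement for the paper's bin: a strict chain in $U$ forces each $\varphi_{n+1}(A)$ to be meager in $\varphi_n(A)$. But Step~3 carries the whole weight of the theorem, it is left open, and neither route you sketch works as written.

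First, the stated aim of preventing $x\in\bigoplus_F A_i+\varphi_n(A)$ cannot be met, since $x\in\bigoplus_F A_i$ for its own support $F$. What must be excluded is $s_n\in\bigoplus_F A_i+\varphi_n(A)$ for the partial sums $s_n=a_0+\cdots+a_{n-1}$; this does follow from $x-s_n\in\varphi_n(A)$.

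Second, the fusion route is circular: $J$ is fixed in advance but must contain the supports of $a_n$ chosen later. More seriously, it tacitly places the support $F$ of $x$ inside $J$. Nothing justifies this. The $A_i$ need not be closed and the coordinate projections need not be continuous, so a limit of elements of $\bigoplus_J A_i$ can have coordinates anywhere in $I$.

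Third, the fallback fails by counting. There are $\mathfrak c$ finite $F\subseteq I$, and meagerness for each fixed $F$ says nothing about their union.

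The missing idea is to dovetail the supports and then project. Let $J_n$ be the finite union of the supports of $a_0,\dots,a_{n-1}$. Choose $a_n$ in your small ball but outside $\bigl(\bigoplus_{J_n}A_i+\varphi_{n+1}(A)\bigr)\cap\varphi_n(A)$, which is a countable union of cosets of the meager subgroup $\varphi_{n+1}(A)$. Put $J=\bigcup_n J_n$, and let $\pi_J$ be the projection of $A$ onto $\bigoplus_J A_i$ along $\bigoplus_{I\setminus J}A_i$.
\begin{itemize}
\item Being $R$-linear, $\pi_J$ preserves $\varphi_{n+1}(A)$.
\item $\pi_J(x)$ has finite support in $J$, hence in some $J_N$.
\item Since $s_{n+1}\in\bigoplus_J A_i$, for $n\geq N$ we get $a_n=\pi_J(x)-s_n-\pi_J(x-s_{n+1})\in\bigoplus_{J_n}A_i+\varphi_{n+1}(A)$, contradicting the choice of $a_n$.
\end{itemize}
Two smaller repairs are also needed. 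The $d_n$ must be translation-invariant, since otherwise $d_n(a_m,0)<2^{-m}$ does not make the series converge. The claim that the inclusions $\varphi_{n+1}(A)\hookrightarrow\varphi_n(A)$ are continuous needs a closed-graph argument.

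Completed this way, your proof genuinely differs from the paper's, which never topologizes pp-subgroups or uses Pettis here. The paper instead takes a bin and $\aleph_1$ disjoint countable elementary summands $K_\gamma\models\Th_{\Pun}(A)$. It uses pigeonhole in the metric of $A$, applied both to the chain witnesses and to their existential witnesses, to build $2^{\omega}$ limits $x_\eta$. It then contradicts finite support with a $\Delta$-system argument.
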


Equivalently, after removing countably many of the given summands, every descending chain of pp-definable subgroups of $U$ stabilizes. No closedness of the summands or continuity of the coordinate projections is assumed.

This allows us to invoke Garavaglia's decomposition theory of $\Sigma$-algebraically compact modules~\cite{garavaglia} and refine the decomposition of the uncountable summand $U = \bigoplus_{i \in I_2} \; A_i$ into a Krull--Schmidt decomposition, i.e., as a direct sum of indecomposable modules with local endomorphism rings. Such a decomposition is essentially unique~\cite[\S V.5]{stenstrom} and, after collecting isomorphic indecomposable summands into homogeneous components, it can be expressed as a direct sum of the form:
\begin{equation} \label{decomp 2}
U = \bigoplus_{j \in J} \;\; U_j^{(\lambda_j)},
\end{equation}
where each $U_j$ is indecomposable with a local endomorphism ring (and $i \neq j$ implies $U_i \not\cong U_j$).  

  Although the decomposition from Theorem~\ref{Thm_DCC} is not unique up to isomorphism, we can extract an important invariant using some basic model theory of modules. Recall that if $\vdash \psi (\bfu) \to \varphi (\bfu)$ is an implication of pp-formulae in the language of $R$-modules, then the quotient $\varphi(M)/\psi(M)$ has a cardinal index in any module ${_R}M$. The Baur--Monk invariants record its exact value when this index is finite and record only that it is infinite otherwise; the collection of these finite-or-infinite invariants determines the complete theory $T=\Th(M)$.

An important feature (Corollary~\ref{Polish_invariants}) of a Polish $R$-module $A$ is that if the index $[\varphi (A) \colon \psi(A)]$ is uncountable, then it has the cardinality of the continuum. We may therefore associate with it the \textbf{Polish unlimited theory of} $A$, given by the axioms 
\begin{equation}\Th_{\Pun} (A) \vdash [\varphi \; \colon \psi] = 
\left\{ \begin{array}{ll} \infty & \mbox{if} \;\; [\grf (A) \colon \psi (A)] = \mathfrak{c} ;\\
1 & \mbox{if} \;\; [\grf (A) \colon \psi (A)] \leq \aleph_0 ,\end{array} \right.
\end{equation}
which is consistent: if $B\preccurlyeq A$ is a bin as in Definition~\ref{the bin}, then Proposition~\ref{bin_quotient} shows that $A/B\models\Th_{\Pun}(A)$. Theorem~\ref{Thm_DCC} therefore implies that the theory $\Th_{\Pun} (A)$ is totally transcendental, when the Polish module $A$ admits a Kaplansky decomposition. 

Another purpose of the paper is to place the basic ideas from the model theory of modules into the context of descriptive set theory. Quantifier-free pp-formulae define closed subgroups of finite powers of a Polish module. More generally, every pp-definable subgroup is Borel and carries a Polish group topology obtained as a quotient of a closed subgroup.
\noindent This topology may be finer than the inherited topology, but it gives exactly the same analytic subsets; see Lemma~\ref{pp_polish}. This allows the meager/nonmeager dichotomy of Baire-category arguments to play itself out in several ways (Theorems~\ref{Thm_Second_qf} and~\ref{analytic_dichotomy}) in a section of the paper devoted to the model theory of Polish modules. The theorems of Pettis~\cite[9.9]{K1} and Mycielski~\cite[19.1]{K1} are applied to establish a remarkable saturation property of Polish modules, which controls pp-indices arising from countable ascending unions in the following way: if a pp-type that is given by a single pp-formula and omits an ascending union of pp-subformulae remains consistent with the Polish unlimited theory, then not only the set of its realizations has cardinality $\mathfrak c$, but the remaining quotient must have continuum many cosets. This is the content of the next theorem.

\begin{theorem} \label{Thm_CH}
	Let $R$ be a countable ring, ${_R}A$ an uncountable Polish module and consider an ascending chain of pp-definable subgroups
	\begin{equation}\psi_0 (A) \subseteq \psi_1 (A) \subseteq \cdots \subseteq \psi_i (A) \subseteq \cdots \subseteq \varphi (A) \subseteq A^n\end{equation}
	that is bounded above by the pp-definable subgroup $\varphi (A)$. If the pp-type 
	\begin{equation}q(\bfu) = \{ \varphi (\bfu) \} \cup \{ \neg \psi_i (\bfu) \colon i < \omega \}\end{equation}
	is $\Th_{{\rm Pun}} (A)$-consistent, then the index 
	$[\varphi (A) \; \colon \Sigma_i \; \psi_i (A)]$ has the cardinality of the continuum $\mathfrak{c}$.	
\end{theorem}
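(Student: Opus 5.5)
The plan is to combine two facts. First, consistency of $q$ with $\Th_{\Pun}(A)$ forces every index $[\varphi(A):\psi_i(A)]$ to be $\mathfrak c$. Second, a Baire-category argument in the Polish group topology on $\varphi(A)$ from Lemma~\ref{pp_polish} shows that a countable increasing union of subgroups of small index cannot leave a quotient of size below $\mathfrak c$.

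\textbf{Step 1: translate consistency into indices.} Suppose some index $[\varphi(A):\psi_i(A)]$ were countable. Then $\Th_{\Pun}(A)$ contains the axiom $[\varphi:\psi_i]=1$, i.e.\ $\varphi\to\psi_i$, and this contradicts $q$. So for every $i$ we have $[\varphi(A):\psi_i(A)]>\aleph_0$. By Corollary~\ref{Polish_invariants} each of these indices equals $\mathfrak c$. Consistency of $q$ with $\Th_{\Pun}(A)$ is used only in this form; finitely many conditions $\neg\psi_i$ reduce to the largest one by the chain condition.

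\textbf{Step 2: pass to a Polish group.} By Lemma~\ref{pp_polish}, $G=\varphi(A)$ carries a Polish group topology, finer than the inherited one but with the same analytic sets. Each $H_i=\psi_i(A)$ is a Borel (in particular analytic) subgroup of $G$, hence has the Baire property in $G$. By Pettis' theorem, a non-meager subgroup with the Baire property is open, so it has countable index in the separable group $G$. Step~1 therefore shows that every $H_i$ is meager in $G$. Consequently $H=\bigcup_i H_i=\Sigma_i\psi_i(A)$ is a meager $F_\sigma$-type analytic subgroup, and it is in fact Borel.

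\textbf{Step 3: produce continuum many cosets.} Consider the coset equivalence relation $E_H$ on $G$. It is an analytic, indeed Borel, subset of $G\times G$, being the preimage of $H$ under the continuous map $(x,y)\mapsto x-y$. It is also meager in $G^2$: by Kuratowski–Ulam, each section $x+H$ is meager. Mycielski's theorem (Kechris 19.1) then gives a perfect set $P\subseteq G$ whose distinct points are pairwise $E_H$-inequivalent, since the complement of $E_H$ minus the diagonal is comeager. Hence $[G:H]\geq\mathfrak c$, and $|G|\leq\mathfrak c$ gives equality. This is the route suggested by the introduction's reference to Pettis and Mycielski.

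\textbf{Main obstacle.} The delicate point is Step~2: working in the correct topology. The subgroups $\psi_i(A)$ are not closed in the subspace topology of $A^n$, and $G$ itself might be meager there. I would therefore rely on Lemma~\ref{pp_polish}, using that the identity map from the new topology on $\varphi(A)$ to $A^n$ is continuous and that analytic sets agree, so the $\psi_i(A)$ are analytic in the new topology. I would also check that Corollary~\ref{Polish_invariants} applies to the index $[\varphi(A):\psi_i(A)]$ in the form needed. Once Step~2 is secured, Step~3 is the standard Mycielski argument.
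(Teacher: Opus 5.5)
Your proof is correct and is essentially the paper's argument, namely Theorem~\ref{Thm_Second_qf} combined with Corollary~\ref{sum_dichotomy}, which is the meager case of Theorem~\ref{analytic_dichotomy}: Pettis' theorem together with the consistency of $q$ rules out a non-meager $\psi_i(A)$, and then Kuratowski--Ulam and Mycielski produce $\mathfrak{c}$ cosets. The paper first proves that the index of $\Sigma_i\psi_i(A)$ is uncountable via the Baire category theorem and then applies the dichotomy, whereas you get meagerness of $\Sigma_i\psi_i(A)$ directly as a countable union of meager subgroups. The one detail to add is that Mycielski's theorem (Lemma~\ref{mycielski_lemma}) requires $\varphi(A)$ to be perfect; this holds because $\varphi(A)$ is uncountable by your Step~1, and a Polish group with an isolated point is discrete, hence countable.
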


The decomposition of Theorem~\ref{Thm_DCC} can be adjusted so that $A = B \oplus U$, where $B$ is a countable elementary summand with a closure condition, i.e., a {\em bin,} and $U \models \Th_{\Pun}(A)$. Theorem~\ref{Thm_CH} can then be used to show that the isomorphism types of the indecomposable summands $U_j$ in Decomposition~(\ref{decomp 2}) are precisely the points of the totally transcendental closed subset $\Cl (\Th_{\Pun} (A)) \subseteq \Zg (R)$ of the Ziegler spectrum associated with the theory $\Th_{\Pun}(A)$, and that each occurs with multiplicity $\grl_j=\mathfrak{c}$. Thus this closed set records exactly the indecomposable summands occurring in $U$, and the unlimited summand $U$ is unique up to isomorphism. In order to state our characterization of Polish modules with a Kaplansky decomposition, let us associate to any totally transcendental closed subset $\cC \subseteq \Zg (R)$ of the Ziegler spectrum the countable module $M_{\cC} := \bigoplus \, \{ U \colon U \in \cC \}$ with one copy of $U$ chosen for each point of $\cC$. The closed set $\cC \subseteq \Zg (R)$ is totally transcendental if and only if $M_{\cC}$ is $\Sigma$-algebraically compact.

\begin{theorem}[Main Theorem]\label{main_th} 
	Let $A$ be an $R$-module with a Kaplansky decomposition $A = \bigoplus_{i \in I} A_i$. Then $A$ admits a Polish $R$-module structure if and only if there is a partition $I = I_1 \cupdot I_2$ and a totally transcendental closed set $\cC \subseteq \Zg (R)$ such that $A = B \oplus U$, where
	\begin{enumerate}[(1)]
		\item $B = \bigoplus_{i \in I_1} A_i$ is a bin (equivalently, a countable elementary submodule such that $A/B \models \Th_{\Pun}(A)$, see Definition~\ref{the bin}) of $A$ (so that $I_1$ is necessarily countable); and 
		\item $U = \bigoplus_{i \in I_2} A_i \cong (M_{\cC})^{(\mathfrak{c})}$.
	\end{enumerate}
	Furthermore, $\cC = \Cl (\Th_{\Pun} (A))$ and $U \cong {(M_{\cC})}^{\gro}$ are invariants associated to $A$.
\end{theorem}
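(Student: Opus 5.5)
We prove the two directions separately, and then the invariance statement.

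For the ``if'' direction, suppose $A = B \oplus U$ with $B$ countable and $U \cong (M_{\cC})^{(\mathfrak{c})}$, where $M_{\cC}$ is countable and $\Sigma$-algebraically compact. The first step is to show $(M_{\cC})^{(\mathfrak{c})} \cong (M_{\cC})^{\omega}$. Since $M_{\cC}$ is $\Sigma$-algebraically compact, $(M_{\cC})^{\omega}$ is again $\Sigma$-algebraically compact, being in the definable closure of $M_{\cC}$ and satisfying the same DCC on pp-subgroups. Garavaglia's theorem then writes $(M_{\cC})^\omega$ as a direct sum of indecomposables with local endomorphism rings. These indecomposables are points of $\cC$: the theory of $(M_{\cC})^\omega$ has the same unlimited indices as $M_{\cC}$, and the indecomposable summands of a $\Sigma$-algebraically compact module are exactly the points of the closed set it determines. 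Each point $N \in \cC$ is a summand of $M_{\cC}$, so $N^\omega$ is a summand of the product. Comparing the sizes of Krull--Schmidt homogeneous components, each of which must have size $\mathfrak{c}$, gives the isomorphism. Once we have this, $A \cong B \oplus M_{\cC}^\omega$ carries the non-Archimedean Polish topology: discrete on $B$, and the product of discrete topologies on $M_{\cC}^\omega$.

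For the ``only if'' direction, let $A$ be Polish with a Kaplansky decomposition. If $A$ is countable, take $I_2 = \emptyset$ (or $\cC = \emptyset$); we should check that $A$ itself is then a bin, by checking Definition~\ref{the bin} directly. If $A$ is uncountable, Theorem~\ref{Thm_DCC} gives a countable $I_1^0$ such that $\bigoplus_{I\setminus I_1^0} A_i$ is $\Sigma$-algebraically compact. Next we enlarge $I_1^0$ to a countable $I_1$ so that $B = \bigoplus_{I_1} A_i$ is a bin. To do this we close off under countably many requirements: for each pp-pair $\psi \le \varphi$ with countable index in $A$, $B$ must contain representatives of all cosets of $\psi(A)$ in $\varphi(A)$, and it must witness the Tarski--Vaught test. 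Each requirement involves countably many elements, each lying in finitely many summands, so an $\omega$-step closure argument produces a countable union of summands. Proposition~\ref{bin_quotient} then gives $U \cong A/B \models \Th_{\Pun}(A)$.

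The main obstacle is identifying $U$, which is $\Sigma$-algebraically compact and hence a direct sum of indecomposables $U_j^{(\lambda_j)}$. Here we show two things. First, the set of $U_j$ is exactly $\Cl(\Th_{\Pun}(A))$. Every $U_j$ lies in this closed set because $U \models \Th_{\Pun}(A)$. Conversely, every point of this totally transcendental set is the hull of a pp-type isolated by a minimal pp-pair that is open in $\Th_{\Pun}(A)$, and $U$ realizes that pair, so the corresponding indecomposable must occur as a summand. Second, every $\lambda_j = \mathfrak{c}$. We argue with Theorem~\ref{Thm_CH}. The isolating pair $\varphi/\psi$ for $U_j$ has index $\mathfrak{c}$ in $A$; by the choice of bin, the same index is realized in $U$. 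Only summands isomorphic to $U_j$ contribute to the corresponding minimal quotient, since the others have trivial quotient or are absorbed by the ascending union of pp-subformulae. This forces $\lambda_j \ge \mathfrak{c}$, while $|A| \le \mathfrak{c}$ gives $\lambda_j \le \mathfrak{c}$. We expect the delicate part to be arranging the pp-type so that Theorem~\ref{Thm_CH} applies: the type $\{\varphi\}\cup\{\neg\psi_i\}$ must be $\Th_{\Pun}(A)$-consistent, where the $\psi_i$ enumerate the pp-formulae below $\varphi$ that are not satisfied by the generic of $U_j$. Once this holds, $U \cong (M_{\cC})^{(\mathfrak{c})}$, and invariance follows from the Krull--Schmidt uniqueness together with $\cC = \Cl(\Th_{\Pun}(A))$, which depends only on $A$.
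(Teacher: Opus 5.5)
Your outline follows the paper's route: the DCC criterion, a summand bin, the Garavaglia decomposition of $U$, a $T$-minimal pp-formula $\grf_W$ with the ascending union $H_{\grf_W}=\sum_i\psi_i$ of its subformulae vanishing on $W$, and Theorem~\ref{Thm_CH} plus the dichotomy to get index $\mathfrak c$. However, the step showing that every point of $\cC=\Cl(\Th_{\Pun}(A))$ occurs as a summand of $U$ rests on a false claim.

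\emph{Points are not isolated.} A point of a totally transcendental closed set need not be isolated by a minimal pair. In Cohn's example at the end of the paper, $\Cl(R_R)=\{R_R,D_R\}$ is the closure of the point $R_R$, so every basic open set containing $D_R$ also contains $R_R$. In the totally transcendental case the positive part of the type of $W$ is generated by the single formula $\grf_W$. Its negative part, i.e.\ $H_{\grf_W}$, is in general not pp-definable. That is exactly why Theorem~\ref{Thm_CH} is needed rather than Corollary~\ref{Polish_invariants}.

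\emph{Realizing the pair does not force occurrence.} The step ``$U$ realizes the pair, so the hull occurs'' fails for such points. The module $(R_R)^{(\mathfrak c)}$ is $\Sigma$-algebraically compact, all its pp-indices are $1$ or infinite, and its closed set is $\{R_R,D_R\}$. Yet it has no summand isomorphic to $D_R$, although it is elementarily equivalent to the Polish module $(R_R)^\omega\cong(R_R)^{(\mathfrak c)}\oplus(D_R)^{(\mathfrak c)}$. So occurrence is exactly where the Polish hypothesis must enter.

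\emph{Repair.} Run your index argument for \emph{every} $W\in\cC$, not only for the $U_j$ already present. The consistency you call delicate is immediate. With $\theta_n=\sum_{i\le n}\psi_i$, the pair $\grf_W/\theta_n$ is nonzero on $W\in\Cl(T)$, so its $T$-index is infinite, and finitely many $\neg\psi_i$ are handled by a single $\theta_n$. Theorem~\ref{Thm_CH} then gives $[\grf_W(A):H_{\grf_W}(A)]=\mathfrak c$. Since $B$ is countable, some summand $V$ of $U$ has $\grf_W(V)\neq H_{\grf_W}(V)$.

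\emph{The unproved lemma.} That last inference, like your assertion that ``only summands isomorphic to $U_j$ contribute,'' needs the paper's key lemma, and you give no argument for it. Suppose $V\in\cC$, $a\in\grf_W(V)\setminus H_{\grf_W}(V)$, and $0\neq w\in\grf_W(W)$. For each pp-formula $\rho$, minimality gives one of two cases.
\begin{itemize}
\item $\grf_W\wedge\rho\equiv_T\grf_W$. Then the pair vanishes at every point of $\Cl(T)$, so both $a$ and $w$ satisfy $\rho$.
\item $\grf_W\wedge\rho$ is some $\psi_j$. Then neither $a$ nor $w$ satisfies $\rho$.
\end{itemize}
Hence $\tp^+(V,a)=\tp^+(W,w)$, and Fact~\ref{pointed_hull_fact} gives $V\cong W$.

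\emph{The ``if'' direction.} The same lemma is what this direction lacks. Occurrence there is fine, since each point of $\cC$ is a summand of $M_\cC$ and Krull--Schmidt--Azumaya applies. The general principle you quote is false, though: $R_R$ above is a counterexample. The real gap is multiplicity. Knowing that $N^\omega$ is a summand of $(M_\cC)^\omega$ does not show that $N$ occurs $\mathfrak c$ times, because $N^\omega$ may have other indecomposable summands, as $(R_R)^\omega$ does. You need $|\grf_W(P)/H_{\grf_W}(P)|=\mathfrak c$ for $P=(M_\cC)^\omega$. The paper obtains this from Theorem~\ref{Thm_CH} applied to the Polish module $P$. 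It also follows directly, since this quotient maps onto $(\grf_W(M_\cC)/H_{\grf_W}(M_\cC))^\omega$ and $\grf_W(W)\neq0$. The lemma then identifies the quotient with $\grf_W(W)^{(\mu_W)}$, and since $W$ is countable, $\mu_W=\mathfrak c$.
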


Notice  that the topology obtained from the backward implication of the Main Theorem is non-Archimedean: the countable module $B$ is given the discrete topology, while $U \cong (M_{\cC})^{\gro}$ carries the product topology. Consequently, an $R$-module with a Kaplansky decomposition admits a Polish $R$-module topology if and only if it admits a non-Archimedean one. This is an immediate \mbox{corollary of Theorem~\ref{main_th}.}

 In Section~\ref{sec_applications} we apply the Main Theorem to the existence of uncountable free Polish modules, returning to the questions about free groups and free abelian groups studied by Shelah and Solecki~\cite{Shelah2003,Shelah2011,Solecki}. For an uncountable free Polish module, removing countably many free summands leaves an isomorphic module, so the DCC criterion (Theorem~\ref{Thm_DCC}) forces ${}_{R}R$ to be $\Sigma$-algebraically compact. A further application of Theorem~\ref{Thm_CH} gives right coherence. Conversely, Chase's theorem and O'Neill's freeness criterion show that, over a countable right coherent, left perfect ring, $R^\omega\cong R^{(\mathfrak{c})}$. We obtain the following characterization, whose equivalence (2)$\Leftrightarrow$(3) is Chase's theorem.

\begin{theorem} \label{Free_Polish} The following are equivalent for a countable ring $R$:
\begin{enumerate}[(1)]
	\item there exists an uncountable free $R$-module with a Polish topology;
	\item the ring $R$ is right coherent and left perfect;
	\item the class $\RProj$ of projective $R$-modules is closed under products; and
	
	\item $({}_{R}R)^{\gro} \isom ({}_{R}R)^{(\mathfrak{c})}$ is a free $R$-module.
	
\end{enumerate}  
\end{theorem}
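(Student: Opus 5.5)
We prove the cycle (1)$\Rightarrow$(2)$\Rightarrow$(4)$\Rightarrow$(1), and get (2)$\Leftrightarrow$(3) from Chase's theorem~\cite[Theorem~3.3]{chase}.

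\emph{(4)$\Rightarrow$(1).} The product $({}_RR)^{\gro}$ with the product of discrete topologies is a non-Archimedean Polish $R$-module, since $R$ is countable. It has cardinality $\mathfrak c$, so it is uncountable. By (4) it is free.

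\emph{(1)$\Rightarrow$(2), $\Sigma$-algebraic compactness.} Let $A=R^{(\kappa)}$ with $\kappa$ uncountable carry a Polish topology. The decomposition into copies of $R$ is a Kaplansky decomposition. Theorem~\ref{Thm_DCC} gives a countable $I_1$ such that $U=R^{(\kappa\setminus I_1)}$ is $\Sigma$-algebraically compact. Since $U$ contains ${}_RR$ as a direct summand, ${}_RR$ is $\Sigma$-algebraically compact. This yields the DCC on pp-definable subgroups of ${}_RR$, and in particular on principal right ideals and on finite matrix subgroups. By Bj\"ork~\cite{Bjork} (the DCC on finitely generated right ideals), $R$ is left perfect.

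\emph{(1)$\Rightarrow$(2), right coherence.} This is the main obstacle. The pp-definable subgroups of ${}_RR$ in one variable are the right ideals $\varphi(R)$. Coherence asks that each of these be finitely generated. Equivalently, by~\cite[Theorem~14.16]{prest_first_book}, for a free module $F$ the subgroup $\varphi(F)$ should equal $\varphi(R)F$, i.e.\ $\sum_{r\in\varphi(R)}rF$. In general $\varphi(R)$ is a countable directed union of finitely generated right ideals $\psi_i(R)$, where $\psi_i(u)$ says $u\in r_1R+\dots+r_iR$, a pp-formula. Since $\varphi(R)$ is pp-definable, the chain $\psi_i(A)\subseteq\varphi(A)$ sits inside the Polish module $A$.

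Suppose the chain does not stabilize. In a free module an element lies in $\psi_i(A)$ coordinatewise, so $\sum_i\psi_i(A)=\varphi(A)$ because every element has finite support. Hence the index $[\varphi(A):\Sigma_i\psi_i(A)]$ equals $1$, which is not $\mathfrak c$. Theorem~\ref{Thm_CH} then says the type $q=\{\varphi\}\cup\{\neg\psi_i\}$ is inconsistent with $\Th_{\Pun}(A)$.

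The remaining task is to derive a contradiction by showing $q$ is consistent with $\Th_{\Pun}(A)$. Take a bin $B$, so that $A/B\models\Th_{\Pun}(A)$ (Proposition~\ref{bin_quotient}). Choose $B$, via the Main Theorem, to be a sum of countably many of the free summands, so that $A/B\cong R^{(\mathfrak c)}$ is free and contains $R$ as an elementary-equivalent summand. Since $\Th(R^{(\mathfrak c)})=\Th(R^{(\aleph_0)})$ and this theory does not prove $\varphi\to\psi_i$ for any $i$, compactness realizes $q$ in a model of $\Th_{\Pun}(A)$. This is the contradiction. It is the step needing care: if the chain in $R$ does not stabilize, its failure in $R$ must persist to an elementary extension where $q$ is realized. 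Compactness gives exactly this, using that $\psi_i(R)\subsetneq\varphi(R)$ for all $i$ holds in $R$ itself. So $\varphi(R)$ is finitely generated, and $R$ is right coherent.

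\emph{(2)$\Rightarrow$(4).} Chase gives that $R^{\gro}$ is projective. For a left perfect ring, projectives are direct sums of the cyclic indecomposable projectives $Re$. O'Neill's freeness criterion then reduces freeness of $R^\omega$ to checking that each $Re$ occurs with multiplicity $\mathfrak c$. This holds because $R^\omega\cong(R^\omega)^{\omega}$, so each multiplicity is uncountable, and it is at most $|R^\omega|=\mathfrak c$. Hence $R^\omega\cong R^{(\mathfrak c)}$.
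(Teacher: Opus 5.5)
Your (4)$\Rightarrow$(1) and (1)$\Rightarrow$(2) essentially follow the paper: Theorem~\ref{Thm_DCC} plus a DCC argument on right ideals for left perfectness, and Theorem~\ref{Thm_CH} against the finite-support identity $\varphi(A)=\bigcup_i\psi_i(A)$ for right coherence. Two side claims there are wrong, though neither carries weight in the argument:
\begin{itemize}
\item $R$ need not be elementarily equivalent to $R^{(\mathfrak c)}$; for a finite field $R$ the former is finite. You only need $R$ to be a direct summand to realize the finite parts of $q$.
\item $\varphi(F)=\varphi(R)F$ holds for every free (indeed every flat) $F$, so it is not a reformulation of coherence.
\end{itemize}
The detour through a summand bin is also unnecessary, since Proposition~\ref{discrete_product} already gives $\Th(A)=\Th_{\Pun}(A)$ for $A=R^{(\mathfrak c)}$.

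The genuine gap is in (2)$\Rightarrow$(4), at the step ``each multiplicity is uncountable because $R^\omega\cong(R^\omega)^{\omega}$.'' Direct-sum decompositions do not commute with products. So this isomorphism tells you nothing about the multiplicity $\lambda_e$ of $Re$ in $R^\omega\cong\bigoplus_j(Re_j)^{(\lambda_j)}$. What you get cheaply is only this:
\begin{itemize}
\item each $\lambda_j$ is infinite, from $R^\omega\cong R^\omega\oplus R^\omega$;
\item \emph{some} $\lambda_j$ equals $\mathfrak c$, from $|R^\omega|=\mathfrak c$.
\end{itemize}
Nothing excludes $\lambda_e=\aleph_0$ for a particular primitive $e$. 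Trying to bootstrap through the summand $(Re)^\omega$ of $(R^\omega)^\omega$ just returns you to the unknown multiplicity of $Re$ in $(Re)^\omega$.

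The missing idea is the radical computation that the paper isolates as Lemma~\ref{third_condition_pleonastic} and feeds into O'Neill's criterion (Fact~\ref{oneill_freeness}, used in Corollary~\ref{F_theorem}). Take a projective $P\cong\bigoplus_j(Re_j)^{(\lambda_j)}$ over a left perfect ring, with $e$ primitive. Then $\lambda_e$ is the dimension of $e(P/JP)=(eP+JP)/JP$ over the countable division ring $eRe/eJe$. Hence $\lambda_e=\mathfrak c$ if and only if $|(eR^\omega+JR^\omega)/JR^\omega|=\mathfrak c$. This last equality holds because $JR^\omega\subseteq J^\omega$ and $e\notin J$. The $\mathfrak c$ vectors with entry $e$ on a set $X\subseteq\omega$ and $0$ elsewhere are therefore pairwise incongruent modulo $JR^\omega$. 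With this step supplied, your reduction to multiplicities completes (2)$\Rightarrow$(4).
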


Thus the rings in Theorem~\ref{Free_Polish} are exactly the countable $F$-rings. We also observe that the $F$-ring and $P$-ring conditions, studied by O'Neill~\cite{ON84,oneill}, coincide for countable rings; see Corollary~\ref{F_theorem}.

To illustrate Theorem~\ref{Free_Polish}, the free $\mathbb Z$-module $\mathbb Z^{(\mathfrak{c})}$ admits no Polish group topology, by results of Dudley~\cite{Dudley} and Solecki~\cite{Solecki}. In contrast, for every countable field $K$, the uncountable free $K$-module $K^{(\mathfrak{c})}\cong K^\omega$ admits a Polish module topology, transported from the product of countably many copies of the discrete field $K$. When $K$ is finite, this topology is compact.

For projective modules, we obtain the following characterization. Condition~(4) of Theorem~\ref{Polish projective existence} gives a direct construction by tensoring a countable power of the corner ring $eRe$ with $Re$. The converse is the main point: the Main Theorem supplies an indecomposable endofinite projective summand, from which we obtain the artinian corner in (3). An idempotent is called \emph{irreducible} (or \emph{primitive}) if it is nonzero and cannot be written as a sum of two nonzero orthogonal idempotents, where idempotents $e$ and $f$ are orthogonal if $ef=fe=0$.

\begin{theorem} \label{Polish projective existence}
	The following are equivalent for a countable ring $R$:
	\begin{enumerate}[(1)]
		\item there exists an uncountable Polish projective $R$-module;
		\item there is an irreducible idempotent $e \in R$ such that the projective module $Re$ is of finite length as a right $eRe$-module, with the action given by multiplication;
		\item there is an irreducible idempotent $e \in R$ such that the corner ring $eRe$ is right artinian and the right $eRe$-module $Re$ is of finite length; and
		\item there is a nonzero idempotent $e \in R$ such that the corner ring $eRe$ is a left $F$-ring and $Re$ is a finitely presented right $eRe$-module.
	\end{enumerate} 
\end{theorem}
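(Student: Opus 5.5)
The plan is to prove the cycle (4)$\Rightarrow$(1)$\Rightarrow$(2)$\Rightarrow$(3)$\Rightarrow$(4), leaning on the Main Theorem~\ref{main_th} for the hard direction and on Theorem~\ref{Free_Polish} (applied to the corner ring) for the constructive one.

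\textbf{(4)$\Rightarrow$(1).} Set $S=eRe$, a countable ring. By Theorem~\ref{Free_Polish}(4), $S^{\omega}\cong S^{(\mathfrak c)}$ as left $S$-modules. Now consider $P=Re\otimes_S S^{\omega}$. Since $Re$ is a finitely presented right $S$-module, tensoring with it commutes with direct products, so $P\cong (Re\otimes_S S)^{\omega}=(Re)^{\omega}$. On the other hand $P\cong Re\otimes_S S^{(\mathfrak c)}\cong (Re)^{(\mathfrak c)}$, which is projective and uncountable as $e\neq 0$. Give $(Re)^{\omega}$ the product topology of discrete countable $Re$; this is a Polish $R$-module structure.

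\textbf{(1)$\Rightarrow$(2).} Let $A$ be an uncountable Polish projective module. By Kaplansky's theorem it has a Kaplansky decomposition into countably generated projective summands, so the Main Theorem~\ref{main_th} gives $A=B\oplus U$ with $U\cong (M_{\cC})^{(\mathfrak c)}$ and $\cC$ totally transcendental and nonempty (since $U$ is uncountable). Pick $N\in\cC$. Then $N$ is an indecomposable direct summand of the projective $U$, hence projective. Indecomposable with local endomorphism ring, and a summand of a direct sum of countably generated projectives, it is cyclic of the form $N\cong Re$ with $e$ an irreducible (in fact local) idempotent. Because $N$ is $\Sigma$-algebraically compact, and in fact appears in a countable $\Sigma$-pure-injective module, it satisfies DCC on pp-definable subgroups. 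The pp-definable subgroups of $Re$ contain the subgroups $Re\,I$ for finitely generated right ideals $I$ of $S=eRe$ (e.g. $Re\cdot s$ is defined by $\exists v\,(u=vs\wedge v=ve)$). This gives DCC on finitely generated $S$-submodules of $Re_S$.

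To upgrade this to finite length, I expect to use Theorem~\ref{Thm_CH}, in analogy with how right coherence was derived in Theorem~\ref{Free_Polish}. The idea is that an ascending chain of finitely generated submodules of $Re_S$ whose union is not pp-definable yields an index of size $\mathfrak c$ inside $U$. This must contradict the structure $U\cong (M_\cC)^{(\mathfrak c)}$, where indices are controlled by $\Th_{\Pun}(A)$. The outcome would be ACC on finitely generated submodules, and combined with DCC this gives finite length. Alternatively, one can note that $Re$ is endofinite: an indecomposable $\Sigma$-pure-injective module that is a point of a totally transcendental set and has uncountable multiplicity, with $\End(Re)\cong S^{\op}$. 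Endofiniteness means finite length over the endomorphism ring, which is exactly $Re_S$ of finite length. \textbf{This step is the main obstacle.} I would first try proving endofiniteness through the multiplicity $\mathfrak c$ and Theorem~\ref{Thm_CH}.

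\textbf{(2)$\Rightarrow$(3).} Since $e\in Re$, the module $eS=eRe$ is a right $S$-submodule (indeed a summand) of $Re_S$, via $Re=eRe\oplus(1-e)Re$. So $S_S$ has finite length, i.e. $S$ is right artinian.

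\textbf{(3)$\Rightarrow$(4).} A right artinian ring is left perfect and right noetherian, hence right coherent. Then Theorem~\ref{Free_Polish} shows $S$ is a left $F$-ring. Moreover, a finite-length module over a right noetherian ring is finitely generated and hence finitely presented.
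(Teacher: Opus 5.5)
Your implications (2)$\Rightarrow$(3)$\Rightarrow$(4)$\Rightarrow$(1) are essentially the paper's: $eRe$ is a summand of $Re_{eRe}$; a right artinian ring is a left $F$-ring; and you tensor $(eRe)^\omega\cong(eRe)^{(\mathfrak c)}$ with the finitely presented $Re$. The gap is in (1)$\Rightarrow$(2), and it is more than the technical obstacle you flag. You take an \emph{arbitrary} point $N\in\cC$ and try to prove it endofinite, and that claim is false even when $A$ is projective.

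For a counterexample, let $L\subsetneq K$ be countable fields with $[K:L]=\infty$ (e.g.\ $K=\mathbb Q(s,t)$, $L=\mathbb Q(t)$), and let $T=\begin{pmatrix}K&K\\0&L\end{pmatrix}$. Left $T$-modules are triples $(V_1,V_2,\phi\colon K\otimes_L V_2\to V_1)$, and such a module is projective exactly when $\phi$ is injective. This holds for $T^\omega$, so $T^\omega\cong(Te_{11})^{(\mathfrak c)}\oplus(Te_{22})^{(\mathfrak c)}$ is an uncountable projective Polish module. Its unlimited summand $U$ has $Te_{22}$ as a direct summand, so $Te_{22}\in\Cl(U)=\cC$. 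But $e_{22}Te_{22}\cong L$, and as a right $L$-module $Te_{22}\cong K_L\oplus L_L$, which has infinite length.

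So neither Theorem~\ref{Thm_CH} nor the multiplicity $\mathfrak c$ can make an arbitrary point of $\cC$ endofinite. Your alternative principle, that a point of a totally transcendental closed set occurring with multiplicity $\mathfrak c$ is endofinite, fails in this example. It also fails for $\mathbb Z_{p^\infty}$ inside $(\mathbb Z_{p^\infty})^\omega$. There is a further problem: $\exists v\,(u=vs\wedge v=ve)$ is not a formula in the language of left $R$-modules, which has no right multiplication. So your claimed DCC on the subgroups $Re\,I$ is also unsupported.

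The missing idea is to choose the point. Lemma~\ref{tt_endofinite_point} supplies an endofinite $V\in\cC$. Its proof uses quasi-compactness of $\Zg(R)$ to get a minimal nonempty closed subset of $\cC$. Ziegler's theorem gives that subset an isolated point, so it is a single closed point, which is endofinite by Fact~\ref{fact_closed_point}. In the example above, this picks $Te_{11}$.

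From there the argument is short. The Main Theorem makes $V$ a direct summand of $U$, hence projective. Your local-endomorphism argument, which is the paper's Lemma~\ref{endofinite_projective_idempotent}, gives $V\cong Re$ with $e$ primitive. Since $\End_R(Re)\cong eRe$ acts by right multiplication, endofiniteness of $V$ is exactly statement (2). No chain-condition argument on $Re_{eRe}$ is needed.
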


Finally, the Main Theorem allows us to characterize the existence of uncountable Polish modules with a Kaplansky decomposition. This condition on $R$ is left-right symmetric.

\begin{theorem} \label{Polish existence}
	Let $R$ be a countable ring. There exists an uncountable Polish left $R$-module with a Kaplansky decomposition if and only if there exists a nonzero indecomposable endofinite left $R$-module. Equivalently, if and only if there exists an indecomposable endofinite right $R$-module.
\end{theorem}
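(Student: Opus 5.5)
We derive both directions from the Main Theorem (Theorem~\ref{main_th}), plus standard facts about endofinite modules and the Ziegler spectrum.

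\emph{Backward direction.} Let $N$ be a nonzero indecomposable endofinite left $R$-module. We may take $N$ countable: an endofinite module is of finite length over its endomorphism ring, so it is generated over $\End(N)$ by finitely many elements. Moreover, an endofinite module is $\Sigma$-algebraically compact, since the pp-definable subgroups of $N$ are $\End(N)$-submodules and so satisfy the descending chain condition. Its endomorphism ring is local, so $N$ is a point of $\Zg(R)$. The set $\{N\}$ is closed: the closed set $\Cl(N)$ consists of indecomposable summands of modules elementarily equivalent to $N^{(\omega)}$, and for an endofinite $N$ these are all isomorphic to $N$. Hence $\cC=\{N\}$ is a totally transcendental closed set with $M_\cC=N$. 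Now put $A=N^{\omega}$ with the product of discrete topologies. This is an uncountable Polish module. Because $N$ is $\Sigma$-algebraically compact, $N^\omega$ is a direct sum of indecomposables with local endomorphism rings, by Garavaglia's decomposition. Every indecomposable summand of $N^\omega\equiv N^{(\omega)}$ lies in $\Cl(N)=\{N\}$. So $N^\omega\cong N^{(\mathfrak c)}$, which is a Kaplansky decomposition. Alternatively, one can invoke the backward implication of Theorem~\ref{main_th} with $B=0$.

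\emph{Forward direction.} Suppose $A$ is uncountable Polish with a Kaplansky decomposition. Theorem~\ref{main_th} gives $A=B\oplus U$ with $U\cong (M_\cC)^{(\mathfrak c)}$, and $\cC=\Cl(\Th_{\Pun}(A))$ nonempty since $U$ is uncountable. A nonempty totally transcendental closed subset of $\Zg(R)$ has an isolated point, because its CB-rank (equivalently, the m-dimension of the t.t.\ theory) is defined. We would then show that such an isolated point $N$ of $\cC$ is endofinite. This is the main obstacle. Two routes are available.
\begin{enumerate}[(a)]
\item Use Theorem~\ref{Thm_CH} and the Polish structure. Every point of $\cC$ has multiplicity $\mathfrak c$, and every pp-index in $U$ is either $1$ or $\mathfrak c$. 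From this we would argue that every pp-definable subgroup of $N$ has finite length over $\End(N)$.
\item Use the known fact that in a t.t.\ (indeed $\Sigma$-pure-injective) closed set, a point $N$ isolated in $\cC$ has a minimal pair. Over the countable module $M_\cC$, which has DCC on pp subgroups, one would then aim to show that $N$ is of finite length over its endomorphism ring.
\end{enumerate}
An alternative we would try first is Crawley-Boevey's theorem: the closed points of $\Zg(R)$ are in bijection with the indecomposable endofinite modules. Take a point of $\cC$ of minimal CB-rank, or more directly a point that is closed in $\Zg(R)$. Such a point exists since $\cC$ is a nonempty compact t.t.\ set: the minimal nonempty closed subsets of a compact space are points when the space is $T_0$ and the set is t.t. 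That point is then endofinite.

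\emph{Left--right symmetry.} The final equivalence with right modules is the standard duality of Herzog/Prest. Elementary duality gives a bijection between the indecomposable endofinite left and right modules: $N\mapsto$ the indecomposable summand of the local dual $\mathrm{Hom}_{\End(N)}(N,E)$, where $E$ is an injective cogenerator over $\End(N)$. Endofiniteness is preserved because the lattice of pp-definable subgroups is anti-isomorphic under duality, and lengths are preserved.
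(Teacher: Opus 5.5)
Your backward direction and the last route you give for the forward direction are the paper's argument. In one direction, $N^\omega\cong N^{(\mathfrak c)}$ follows from Garavaglia and $\Cl(N)=\{N\}$ (Lemma~\ref{endofinite_power}). In the other, quasi-compactness plus Zorn gives a minimal nonempty closed $\mathcal D\subseteq\cC$, which is a single closed point and hence endofinite by Fact~\ref{fact_closed_point} (Lemma~\ref{tt_endofinite_point}). For the left--right symmetry you use local duality on the module $N$ itself (Crawley-Boevey, Herzog). The paper instead dualizes the theory $\Th(N)$ and extracts an indecomposable summand of a $\Sigma$-algebraically compact model of the dual theory. Either is fine at the level of citation. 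Several steps as written, however, are wrong.

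The strategy you call the main obstacle aims at a false statement: an isolated point of a totally transcendental closed set need not be endofinite.
\begin{itemize}
\item Over $R=\mathbb Z$, take $A=(\mathbb Z_{p^\infty})^\omega$. Being divisible with $p$-primary torsion, it is a direct sum of copies of $\mathbb Z_{p^\infty}$ and $\mathbb Q$, so it is an uncountable Polish module with a Kaplansky decomposition. Here $\cC=\Cl(\Th_{\Pun}(A))=\{\mathbb Z_{p^\infty},\mathbb Q\}$.
\item The pair $(pu\doteq0)/(u\doteq0)$ isolates $\mathbb Z_{p^\infty}$ in $\cC$.
\item Yet the subgroups defined by $p^nu\doteq0$ form an infinite strictly ascending chain of $\End(\mathbb Z_{p^\infty})$-submodules, so $\mathbb Z_{p^\infty}$ is not endofinite.
\end{itemize}
Routes (a) and (b) therefore cannot work. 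In particular, the Polish data in (a) (multiplicity $\mathfrak c$, indices $1$ or $\mathfrak c$) hold for every point of $\cC$ and cannot single out the endofinite ones. For the same reason, ``a point of minimal CB-rank'' is the wrong choice; here it is $\mathbb Z_{p^\infty}$. You need a closed point, for example one in the last nonempty Cantor--Bendixson derivative.

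Two further corrections to the forward direction. First, $\Zg(R)$ need not be $T_0$ (see the indiscrete rings in the introduction). So the fact that $\mathcal D$ is a singleton should come from $\mathcal D$ having an isolated point $V$: then $\mathcal D\setminus\{V\}$ is a proper closed subset, hence empty. Second, the implication closed $\Rightarrow$ endofinite is exactly where countability of $R$ enters, via Ziegler's rank theorem. The paper notes it is open for uncountable rings, so you should not quote it as a general bijection.

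In the backward direction, finite generation over $\End(N)$ gives no bound on $|N|$. Over $R=\mathbb Q$, the space $\mathbb Q^{(\aleph_1)}$ is simple over its endomorphism ring, hence endofinite, yet uncountable. You need countability for the product of discrete copies to be Polish, and it comes from indecomposability. A countable elementary submodule of $N$ is again $\Sigma$-algebraically compact, hence pure-injective and pure in $N$, hence a direct summand, hence all of $N$ (Lemma~\ref{tt_closed_set}). Finally, you cannot invoke the Main Theorem with $B=0$: $0$ is not an elementary submodule of a nonzero module, so it is not a bin. Your direct product-topology argument already suffices.
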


A class of examples where the condition of Theorem~\ref{Polish existence} fails is provided by the countable indiscrete rings of Prest, Rothmaler, and Ziegler~\cite{PRZ} which are not simple artinian~\cite[\S 8.2.12]{prest_second_book}. By definition, their Ziegler spectra are indiscrete topological spaces. Simple artinian rings are also indiscrete, but their spectra consist of a single point, represented by the unique simple module; these rings must therefore be excluded here. There are von Neumann regular examples, such as the direct limit
 $R=\varinjlim_n M_{2^n}(\grD)$ over a countable division ring $\grD$, with unital embeddings $a\mapsto\operatorname{diag}(a,a)$. There are also indiscrete examples which are not von Neumann regular; indiscrete rings belong to the broader class of \emph{almost regular} rings discussed in the same section. The indiscrete rings which are not simple artinian have more than one point in their spectra and hence no closed point. By Fact~\ref{fact_closed_point}, a countable such ring admits no indecomposable endofinite module. For the displayed direct limit, one can also exclude any nonzero endofinite module directly. Indeed, if $0\neq V$ had finite length over $S=\End_R V$, the diagonal matrix units of $M_{2^n}(\Delta)$ would decompose $V$ into $2^n$ nonzero right $S$-submodules. These submodules are isomorphic via the matrix units, whose actions commute with $S$. Thus the length of $V_S$ would be at least $2^n$ for every $n$, a contradiction. By Theorem~\ref{Polish existence}, such a countable indiscrete ring admits no uncountable Polish module with a Kaplansky decomposition.

There is also a version of Theorem~\ref{Polish existence} for a prescribed complete first-order theory $T$ of $R$-modules. Recall that the \textbf{unlimited part of $T$} is the complete theory defined as
\begin{equation}\label{def_unlimited_theory} T_u \vdash [\varphi \; \colon \psi] = 
	\left\{ \begin{array}{cc} \infty & \mbox{if} \;\; T \vdash [\grf \; \colon \psi] = \infty ;\\
		1 & \mbox{if} \;\; T \vdash [\grf \; \colon \psi] < \infty. \end{array} \right.
\end{equation} 
The theory $T_u$ is consistent, for if $M \prec N$ is an $|M|^+$-saturated extension of $M \models T,$ then $N/M \models T_u.$ The important property that we will need is that any point $V \in \Cl (T_u) \subseteq \Cl (T) \subseteq \Zg (R)$ has the property that $\grf (V) = \psi (V)$ for every pp-pair $\grf/\psi$ whose index is finite in $T.$ It follows that if $B \models T,$ then $B \oplus V \models T.$

\begin{corollary}\label{theory_polish_existence}
Let $T$ be a complete first-order theory of modules over a countable ring $R$. Then $T$ has an uncountable Polish model admitting a Kaplansky decomposition if and only if there exists a nonzero indecomposable endofinite $R$-module $V \in \Cl (T_u) \subseteq \Zg (R).$ In that case, $B\oplus V^\omega \models T$ is such a Polish model for every countable $B\models T$.
\end{corollary}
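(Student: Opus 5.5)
We prove the two implications separately, using the Main Theorem (Theorem~\ref{main_th}) as the engine in the forward direction and a direct product construction in the backward one.

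For the backward direction, fix a nonzero indecomposable endofinite module $V\in\Cl(T_u)$ and a countable $B\models T$. Since $R$ is countable and $V$ has finite length over its endomorphism ring, $V$ is countable. It is also $\Sigma$-algebraically compact, because endofinite modules satisfy the DCC on pp-definable subgroups. Give $B$ the discrete topology and $V^\omega$ the product topology of discrete copies of $V$. Then $A=B\oplus V^\omega$ is a non-Archimedean Polish module. It is uncountable as $V\neq 0$.

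Next we exhibit a Kaplansky decomposition. Since $V$ is $\Sigma$-algebraically compact, $V^\omega$ is a direct summand of a module in which products and sums agree elementarily. More concretely, $V^\omega$ is $\Sigma$-algebraically compact, hence a direct sum of indecomposables with local endomorphism rings by Garavaglia. Each such indecomposable lies in the closure of $V$, which is the point $V$ itself, since an endofinite point is closed. Thus $V^\omega\cong V^{(\mathfrak c)}$, and $A=B\oplus V^{(\mathfrak c)}$ is a Kaplansky decomposition.

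It remains to check $A\models T$. Because $V^\omega\equiv V^{(\omega)}$ (as $V$ is $\Sigma$-pure-injective, or directly from Baur--Monk invariants of powers), it suffices to show $B\oplus V^{(\kappa)}\models T$. For each pp-pair $\varphi/\psi$, the invariant of $B\oplus V^{(\kappa)}$ is the product of those of $B$ and of $V^{(\kappa)}$. If $T\vdash[\varphi:\psi]<\infty$, then $V\in\Cl(T_u)$ gives $\varphi(V)=\psi(V)$, so the index is unchanged. If $T\vdash[\varphi:\psi]=\infty$, the index in $B$ is already infinite. Hence the invariants agree with those of $T$, and by Baur--Monk $A\models T$. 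This is exactly the remark preceding the corollary, applied to $V^{(\mathfrak c)}$, whose points all equal $V$.

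For the forward direction, let $A\models T$ be uncountable Polish with a Kaplansky decomposition. By the Main Theorem, $A=B\oplus U$ with $B$ a bin and $U\cong (M_\cC)^{(\mathfrak c)}$, where $\cC=\Cl(\Th_{\Pun}(A))$. The set $\cC$ is nonempty because $U$ is uncountable. Since $\cC$ is totally transcendental, it has a point that is isolated in a minimal closed subset. I would take $V$ to be a closed point of $\cC$, which exists because the Cantor--Bendixson/DCC argument in a t.t.\ closed set yields minimal nonempty closed subsets that are singletons (cf.\ Fact~\ref{fact_closed_point}); closed points of the t.t.\ part are endofinite. The main step is then to show $\cC\subseteq\Cl(T_u)$, i.e.\ that $\Th_{\Pun}(A)$ opens only pairs unbounded in $T$. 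If $[\varphi(A):\psi(A)]=\mathfrak c$, then the index is infinite, so $T\vdash[\varphi:\psi]=\infty$. Conversely, if $T$ says the index is finite, then it is finite in $A$, hence at most countable, so $\Th_{\Pun}(A)$ closes the pair. Therefore every pair closed in $T_u$ is closed in $\Th_{\Pun}(A)$, giving $\Cl(\Th_{\Pun}(A))\subseteq\Cl(T_u)$.

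I expect the main obstacle to be justifying that the chosen point of $\cC$ is endofinite. The key is the fact that in a totally transcendental closed set, points that are closed are endofinite, which holds because the isolating pp-pair can be shrunk to a minimal pair.
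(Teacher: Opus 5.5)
Your argument is essentially the paper's own proof. The forward direction combines the Main Theorem, Lemma~\ref{tt_endofinite_point} (a minimal nonempty closed subset of the totally transcendental set $\cC$ is a closed point, hence endofinite by Fact~\ref{fact_closed_point}), and the inclusion $\Cl(\Th_{\Pun}(A))\subseteq\Cl(T_u)$. The backward direction is Lemma~\ref{endofinite_power} together with the Baur--Monk comparison of $F(B\oplus V^\omega)\cong F(B)\oplus F(V)^\omega$ with $F(B)$.

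One step is justified incorrectly: the countability of $V$. Finite endolength over a countable ring does not by itself give countability. For example, the $\mathbb{Q}$-vector space $\mathbb{R}$ has endolength $1$, since its endomorphisms act transitively on nonzero vectors, yet it is uncountable. You have to use indecomposability, as the paper does through Lemma~\ref{tt_closed_set}:
\begin{itemize}
\item Take a countable elementary submodule $0\neq V'\preccurlyeq V$. It inherits the DCC on pp-definable subgroups, because strictness of $\grf_{k+1}\subseteq\grf_k$ is expressed by a sentence.
\item Hence $V'$ is pure-injective, and so it is a direct summand of $V$ via the pure inclusion.
\item Since $V$ is indecomposable, $V'=V$.
\end{itemize}
This is not cosmetic. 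Countability of $V$ is what makes $V^\omega$ Polish in the product of discrete topologies, and what makes the summands of $V^{(\mathfrak c)}$ countable. With it in place, and the trivial cardinal count $|V^\omega|=\mathfrak c$ forcing $V^\omega\cong V^{(\mathfrak c)}$, your proof is complete.

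Your closing heuristic about ``shrinking the isolating pp-pair to a minimal pair'' is not what makes closed points endofinite. That is Ziegler's theorem, Fact~\ref{fact_closed_point}, which you should simply cite.
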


\noindent {\em Conventions.} Throughout the paper, $R$ will denote a countable ring with identity $1\neq0$, and all modules will be unital. We give $R$ the discrete topology whenever topology is considered. Unless stated otherwise, $R$-module will refer to a left $R$-module. We write $M^I$ for the direct product and $M^{(I)}$ for the direct sum of $I$ copies of $M$. We will refer to totally transcendental modules\footnote{As the ring $R$ is assumed to be countable throughout, these are just the $\gro$-stable modules.} as $\Sigma$-algebraically compact modules, but their theories as totally transcendental.

\noindent {\em Acknowledgement.} ChatGPT (OpenAI, GPT-6) was used to assist with locating references and revising the exposition, including the presentation of some proofs. Claude (Anthropic) was used for an additional reference check. The authors reviewed and revised the resulting material and take full responsibility for the mathematical content and final text.

 \endgroup

\section{Preliminaries: the model theory of modules}

	The language ${\mathcal L} (\RMod) = (+,-,0,r)_{r \in R}$ for $R$-modules expands the language for abelian groups with a unary function symbol for every $r \in R$; it is therefore also countable. The class $\RMod$ is elementary, axiomatized by the usual axioms ${_R}T \subseteq \Th (\RMod)$ for an $R$-module, which are first-order expressible in ${\mathcal L} (\RMod)$. A tuple $\bfu$ may be considered as a row vector of length $|\bfu|.$

\subsection{Positive-primitive formulae}

Terms in ${\mathcal L} (\RMod)$ are $0$ and linear combinations $r_1 u_1 + r_2 u_2 + \cdots + r_n u_n$ of variables with scalars in the ring, so that atomic formulae are just linear equations. 

	\begin{definition} \label{def_pp_formula} A \textbf{positive-primitive formula (pp-formula)} $\grf (\bfu)$ is an existentially quantified finite system of linear equations, displayed as
		\begin{equation} \label{pp_form} \grf (\bfu) = \exists \bfv \; (A \bfu^t \doteq B \bfv^t),\end{equation}
	with free variables $\bfu = (u_1, \ldots, u_n)$, existentially bound variables $\bfv = (v_1, \ldots, v_k)$, and matrices $A$ and $B$, with entries in $R$, of sizes $m \times n$ and $m \times k$, respectively; the superscript $t$ on a tuple of variables indicates that it is a column vector. 
	\end{definition}

Actually, we call a formula $\mrm{pp}$ if it is logically equivalent, relative to the theory ${_R}T$ of $R$-modules introduced above, to a pp-formula. For example, if $\grf_1 (\bfu)$ and $\grf_2 (\bfu)$ are pp-formulae in the same free variables, then the conjunction $(\grf_1 \wedge \grf_2) (\bfu) := \grf_1 (\bfu) \wedge \grf_2 (\bfu)$ has a positive-primitive prenex normal form. 

A pp-formula $\grf (\bfu)$ defines in the $R$-module ${_R}M$ a \textbf{pp-definable subgroup} $\grf(M) \subseteq M^{|\bfu|}.$ The assignment $M \mapsto \grf (M)$ is functorial: if $f \colon {_R}M \to {_R}N$ is a morphism of $R$-modules, then $f^{|\bfu|} \colon M^{|\bfu|} \to N^{|\bfu|}$ denotes the map obtained by applying $f$ coordinatewise, and $\grf (f) \colon \grf (M) \to \grf (N)$ is its restriction, as in the commutative diagram
\begin{equation}\xymatrix{M^{|\bfu|} \ar[r]^{f^{|\bfu|}} & N^{|\bfu|} \\
		\grf (M) \ar[r]^{\grf (f)} \ar@{^{(}->}[u] & \grf (N). \ar@{^{(}->}[u]
}\end{equation}
This functor is denoted by $\grf (-) : \RMod \to \Ab$, its $M$-component by $\grf (M)$.

Modulo equivalence over ${_R}T$, the conjunction operation just defined induces the meet operation $\wedge$ on pp-formulae. There is also a supremum operation on pp-formulae. Given $\grf (\bfu)$ and $\psi (\bfu),$ the formula 
\begin{equation}(\grf + \psi) (\bfu) := \exists \bfv, \bfw \; (\grf (\bfv) \wedge \psi (\bfw) \wedge \bfu \doteq \bfv + \bfw)\end{equation}
is also a pp-formula and it defines in the $R$-module $M$ the pp-definable subgroup \linebreak
$(\grf + \psi) (M) = \grf (M) + \psi (M)$ of $M^{|\bfu|}$.

	\begin{definition} \label{pp-pair}
		A \textbf{pp-pair} $\grf/\psi = (\grf/\psi)(\bfu)$ refers to an implication $${_R}T \vdash \psi (\bfu) \to \grf (\bfu)$$ of pp-formulae. Even if two pp-formulae $\psi$ and $\grf$ are not related by implication, one writes $\grf/\psi$ to mean $\grf/(\grf \wedge \psi)$. If $\grf/\psi$ is a pp-pair, then for every module $M$, $\psi (M) \leq \grf (M)$ and the quotient group is denoted by $(\grf/\psi)(M) := \grf (M)/\psi (M)$. This assignment $(\grf/\psi)(-) \colon M \mapsto (\grf/\psi)(M)$ is also functorial. 
	\end{definition}

\subsection{Purity}\label{sec_purity}

A short exact sequence 
\begin{equation} \label{pex}
	\xymatrix@1{0 \ar[r] & M \ar[r]^f & N \ar[r] & N/M \ar[r] & 0}
\end{equation} is \textbf{pure exact} if for every pp-formula $\grf (\bfu)$, the sequence of abelian groups
\begin{equation}\xymatrix@1{0 \ar[r] & \grf(M) \ar[r]^{\grf(f)} & \grf(N) \ar[r] & \grf(N/M) \ar[r] & 0}\end{equation}
is exact. The embedding of $R$-modules $f\colon M \hookrightarrow N$ that occurs in the pure exact sequence~(\ref{pex}) is called \textbf{pure} or a \textbf{pure embedding.} There are many equivalent conditions for a short exact sequence to be pure exact. The one that will be useful to us is the seemingly stronger one that for every pp-pair $\grf/\psi$, the sequence of abelian groups 
\begin{equation}\label{pure_pp_pair_exact}\xymatrix@1@C=30pt{0 \; \ar[r] & \; (\grf/\psi)(M) \; \ar[r]^{(\grf/\psi)(f)} & \; (\grf/\psi)(N)\; \ar[r] & \;(\grf/\psi)(N/M)\; \ar[r] &\; 0}\end{equation}
is exact. 

A module $M$ is \textbf{pure-injective} if every pure exact sequence~(\ref{pex}) is split exact, meaning that the embedding $f \colon M \hookrightarrow N$ admits a retraction $r \colon N \to M$, $rf = 1_M$ (equivalently, $M$ is a direct summand of $N$).

Pure-injective modules are equivalently \textbf{algebraically compact}: every system of linear equations with constants in the module whose finite subsystems are solvable has a simultaneous solution~\cite[Theorem~3.1]{Zg}.\bigskip 

For $a\in M$, write $\tp^+(M,a):=\{\grf(u):\grf\text{ is pp and }M\models\grf(a)\}$ for its \textbf{positive pp-type} over the empty set. If there is a morphism $f \colon (M,a) \to (N,b)$ of pointed $R$-modules, i.e., a morphism $f \colon {_R}M \to {_R}N$ of $R$-modules such that $f(a) = b,$ then $\tp^+ (M,a) \subseteq \tp^+ (N,b).$ Conversely, if $N$ is pure-injective, and $\tp^+ (M,a) \subseteq \tp^+ (N,b),$ then there exists a morphism $f: (M,a) \to (N,b)$~\cite[Corollary~3.3]{Zg}. The pure-injective hull $H(a) = H(\tp^+(N,a))$ is a pure-injective direct summand of $N$ containing $a$ with the property that the restriction to $H(a)$ of any morphism $g: (N,a) \to (K,c)$ with $\tp^+ (N,a) = \tp^+ (K,c)$ is a pure embedding, and therefore split. These considerations lead to the following fact.

\begin{fact}[{\cite[Theorem~3.6 and Lemma~4.1]{Zg}}]\label{pointed_hull_fact}
Let $N$ and $N'$ be indecomposable pure-injective modules, and let $0\neq a\in N$ and $0\neq a'\in N'$. If $\tp^+(N,a)=\tp^+(N',a')$, then there is an isomorphism $N\cong N'$ taking $a$ to $a'$.
\end{fact}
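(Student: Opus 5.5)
Since $N$ and $N'$ are pure-injective and $\tp^+(N,a)=\tp^+(N',a')$, the criterion recalled just before the statement gives morphisms $f\colon (N,a)\to(N',a')$ and $g\colon(N',a')\to(N,a)$. The plan is to show that $f$ is already an isomorphism, using indecomposability on both sides.

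First we show that $f$ is a split embedding. By the hull property, the restriction of $f$ to the pure-injective hull $H(a)\subseteq N$ is a pure embedding, because $f$ preserves the positive type of $a$. Since $H(a)$ is pure-injective, this pure embedding splits, so $f(H(a))$ is a direct summand of $N'$. Next, $H(a)$ is a nonzero direct summand of $N$: it contains $a\neq 0$ and is a pure-injective summand by construction. As $N$ is indecomposable, $H(a)=N$. Hence $f$ is a split monomorphism $N\to N'$ whose image is a nonzero direct summand of $N'$.

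Indecomposability of $N'$ then forces $f(N)=N'$, so $f$ is an isomorphism with $f(a)=a'$.

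The main point to check is that $H(a)$ really is a direct summand of $N$ with the stated hull property for arbitrary morphisms preserving the type. This is exactly Ziegler's construction of hulls of pp-types, \cite[Theorem~3.6]{Zg}, which we take as given; uniqueness of hulls \cite[Lemma~4.1]{Zg} is consistent with this argument but not needed beyond it. Alternatively, one could argue with $gf\in\End(N)$, using that indecomposable pure-injectives have local endomorphism rings and that $gf$ fixes $a$. However, the direct route via the hull avoids any need for locality.
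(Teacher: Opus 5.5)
Your proof is correct and follows essentially the paper's route. Both arguments use indecomposability to identify $N$ with the hull $H(a)$. Where the paper also identifies $N'$ with $H(a')$ and invokes uniqueness of pointed hulls, you use the splitting property of the hull (recalled just before the statement) together with indecomposability of $N'$, which is exactly the mechanism behind that uniqueness.

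The only slip is bibliographic. The paper cites \cite[Theorem~3.6]{Zg} for uniqueness of hulls and \cite[Lemma~4.1]{Zg} for identifying an indecomposable pure-injective with the hull of a nonzero element, which is the reverse of your attributions.
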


\begin{proof}
The pointed pure-injective hull of $a$ is a direct summand of $N$. Since $a\neq0$ and $N$ is indecomposable, this hull is all of $N$, by~\cite[Lemma~4.1]{Zg}. Likewise, $N'$ is the pointed pure-injective hull of $a'$. The equal positive pp-types identify the two pointed hulls, by the uniqueness assertion in~\cite[Theorem~3.6]{Zg}, giving an isomorphism taking $a$ to $a'$.
\end{proof}

\subsection{Complete theories of modules}

If $\grf/\psi$ is a $\mrm{pp}$-pair, then the sentence 
\begin{equation}[\grf \; \colon \psi] \geq n := \exists \bfu_1, \ldots, \bfu_n \; (\bigwedge_i \grf (\bfu_i) \wedge \bigwedge_{i<j} \neg \psi (\bfu_i - \bfu_j))\end{equation}
is first-order expressible in the language ${\mathcal L}(\RMod)$. It follows that if ${_R}M \equiv {_R}N$ are elementarily equivalent $R$-modules, then the groups 
$(\grf/\psi)(M)$ and $(\grf/\psi)(N)$ are either both of the same finite order or are both infinite. This is denoted by  
\begin{equation}\label{BGM} [\grf (M) \; \colon  \psi (M)] \; \equiv \; [\grf (N) \; \colon  \psi (N)] \!\!\!\! \pmod{\infty}.\end{equation}
The Baur--Monk theorem (see the original proof in~\cite{Baur} and~\cite[Corollary~2.15]{prest_first_book}) gives the converse: if $M$ and $N$ are $R$-modules such that condition~\eqref{BGM} holds for every pp-pair $\grf/\psi$, then $M \equiv N$. \bigskip

If $M \subseteq N$ is a pure submodule, then the complete theory of the quotient module $N/M$ can sometimes be determined using the following proposition.

\begin{proposition}[{\cite[Proof of Corollary~2.2(1)]{Zg}}]\label{Ziegler Form 2} If the short exact sequence~(\ref{pex})
	is pure exact, then for every pp-pair $\grf/\psi$, 
	\begin{equation} [\grf (N) \; \colon  \psi (N)] = [\grf (M) \; \colon  \psi (M)] \cdot [\grf (N/M) \; \colon  \psi (N/M)].\end{equation}
\end{proposition}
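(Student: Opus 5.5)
The plan is to use the exactness of~(\ref{pure_pp_pair_exact}) for the pp-pair $\grf/\psi$, which the paper takes as the working characterization of purity. Once that sequence
\[
0 \to (\grf/\psi)(M) \to (\grf/\psi)(N) \to (\grf/\psi)(N/M) \to 0
\]
is known to be exact, the claim is just cardinal arithmetic for a short exact sequence of abelian groups. If $0\to G_1\to G\to G_2\to 0$ is exact, then $G$ is partitioned into the cosets of the image of $G_1$, and these cosets are in bijection with $G_2$. Hence $|G|=|G_1|\cdot|G_2|$, which holds for finite and infinite cardinals alike. Taking $G=\grf(N)/\psi(N)$, $G_1=\grf(M)/\psi(M)$ and $G_2=\grf(N/M)/\psi(N/M)$ gives the displayed equality of indices.

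To avoid quoting the "seemingly stronger" form blindly, I would derive it from the basic definition of purity, namely exactness of $0\to\grf(M)\to\grf(N)\to\grf(N/M)\to0$ for each single pp-formula. The argument has three steps.

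\emph{Surjectivity.} This follows at once from the surjectivity of $\grf(N)\to\grf(N/M)$.

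\emph{Exactness in the middle.} Let $c\in\grf(N)$ map into $\psi(N/M)$. Its image also lies in $\grf(N/M)$, so it lies in $(\grf\wedge\psi)(N/M)$; here I use the convention of Definition~\ref{pp-pair}, replacing $\psi$ by $\grf\wedge\psi$. By purity applied to $\grf\wedge\psi$, there is $d\in(\grf\wedge\psi)(N)$ with $c-d\in M^n$. Since $c-d\in\grf(N)\cap M^n=\grf(M)$, the class of $c$ modulo $\psi(N)$ comes from $(\grf/\psi)(M)$.

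\emph{Injectivity.} If $a\in\grf(M)$ lies in $\psi(N)$, then $a\in\psi(N)\cap M^n=\psi(M)$ by purity, so $a$ is zero in $(\grf/\psi)(M)$.

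The only delicate point is the middle exactness. There one needs purity for the conjunction $\grf\wedge\psi$, which is pp, and not merely for $\grf$ and $\psi$ separately.
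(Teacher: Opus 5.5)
Your proof is correct and follows the same route the paper relies on: the paper gives no argument beyond citing Ziegler and the pp-pair exactness (\ref{pure_pp_pair_exact}), from which the index formula is the cardinal identity $|G|=|G_1|\cdot|G_2|$ for a short exact sequence, exactly as you argue. Your derivation of (\ref{pure_pp_pair_exact}) from the single-formula definition of purity, which the paper only asserts, is also correct. When ${_R}T\vdash\psi\to\grf$, purity for $\psi$ alone already suffices in the middle-exactness step, since $\grf\wedge\psi$ is then equivalent to $\psi$.
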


\begin{definition}[{\cite{garavaglia}}]\label{gava_ex}
	A module $M$ is said to be $\Sigma$\textbf{-algebraically compact} if the direct sum of any number of copies of $M$ is pure-injective. $\Sigma$-algebraically compact modules are characterized by the descending chain condition on pp-definable subgroups. In other words, given a descending chain of pp-definable subgroups,
	\begin{equation}M^n = \grf_0 (M) \supseteq \grf_1 (M) \supseteq \cdots \supseteq \grf_i (M) \supseteq \cdots,\end{equation}
	there is an $m$, such that for all $k \geq m$, $\grf_k (M) = \grf_{k+1}(M)$. Replacing each $\grf_k$ by $\bigwedge_{i \leq k}\grf_i$ does not change its $M$-component, so we may take each $\grf_k/\grf_{k+1}$ to be a pp-pair and say that $\Th (M) \models [\grf_k \; \colon \grf_{k+1}] = 1$, for all $k \geq m$.
\end{definition}

\subsection{The Ziegler spectrum}\label{sec_ziegler}

The Ziegler spectrum $\Zg(R)$ of a ring $R$ is the topological space whose points are the (isomorphism types of) indecomposable pure-injective $R$-modules. A basis for its topology is given by the sets
\[
	\Op(\grf/\psi):=\{U\in\Zg(R)\mid(\grf/\psi)(U)\neq 0\},
\]
as $\grf/\psi$ ranges over pp-pairs.
Given a complete theory $T\supseteq {_R}T$, let $\Cl(T)\subseteq\Zg(R)$ be defined by specifying its complement:
\[
	\Cl(T)^c:=\bigcup \; \{\Op(\grf/\psi)\mid T\models[\grf \; \colon \psi]=1\}.
\]
Since every $\Op(\grf/\psi)$ is open, $\Cl(T)^c$ is open. Thus $\Cl(T)$ is a closed subset of $\Zg(R)$, called {\em the closed subset associated with $T$}. For a module $M$, we define $\Cl(M):=\Cl(\Th(M))$. For $M\models T$, put $T^{\gro}:=\Th(M^{\gro})$. Since $(\grf/\psi)(M^{\gro})\cong((\grf/\psi)(M))^{\gro}$, its pp-invariants are $1$ where those of $T$ are $1$, and infinite otherwise; Baur--Monk therefore shows that this definition is independent of $M$. Evidently, $\Cl(T)=\Cl(T^{\gro})$. Finally, associated with a closed set $\cC\subseteq\Zg(R)$ is the module $M_{\cC}:= \; \bigoplus \;\{U\mid U\in\cC\}$.

\begin{fact}[{\cite[Theorem~4.9 and Corollary~4.10]{Zg},~\cite[\S 5.1]{prest_second_book}}]\label{Ziegler_topology}
	The rule $T \mapsto \Cl (T)$ is a bijective correspondence between complete theories of modules for which every pp-invariant is either $1$ or infinite and closed subsets $\cC \subseteq \Zg (R)$; the inverse rule is given by $\cC \mapsto \Th((M_{\cC})^{\gro})$.
\end{fact}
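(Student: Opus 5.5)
We prove the two compositions separately. First, a closed set $\cC$ is recovered from $\Th((M_{\cC})^{\gro})$. Second, a theory $T$ with all invariants $1$ or $\infty$ is recovered from $\Cl(T)$.

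\emph{Step 1: $\cC \mapsto T_{\cC} := \Th((M_{\cC})^{\gro})$ lands in the right class and $\Cl(T_{\cC})=\cC$.} Pp-definable subgroups commute with direct sums and direct products. Hence for any pp-pair,
\[
(\grf/\psi)\big((M_{\cC})^{\gro}\big)\cong\Big(\bigoplus_{U\in\cC}(\grf/\psi)(U)\Big)^{\gro}.
\]
This group is trivial if $(\grf/\psi)(U)=0$ for every $U\in\cC$, and infinite otherwise. So every invariant of $T_{\cC}$ is $1$ or $\infty$, and $T_{\cC}\models[\grf\colon\psi]=1$ iff $\Op(\grf/\psi)\cap\cC=\emptyset$. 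It follows that $\Cl(T_{\cC})^c$ is the union of all basic open sets $\Op(\grf/\psi)$ disjoint from $\cC$. Since $\cC$ is closed and the sets $\Op(\grf/\psi)$ form a basis, this union is exactly $\cC^c$. Thus $\Cl(T_{\cC})=\cC$, and the rule $T\mapsto\Cl(T)$ is surjective.

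\emph{Step 2: $T=T_{\Cl(T)}$ whenever all invariants of $T$ are $1$ or $\infty$.} This gives injectivity. By Baur--Monk and Step 1, it suffices to show
\[
T\models[\grf\colon\psi]=1 \iff (\grf/\psi)(U)=0 \text{ for all } U\in\Cl(T).
\]
The forward implication is the definition of $\Cl(T)$. For the converse, suppose $T\models[\grf\colon\psi]=\infty$; we must produce $U\in\Cl(T)$ with $(\grf/\psi)(U)\neq0$.

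\begin{itemize}
\item Take a pure-injective model $M\models T$, for instance an $\aleph_1$-saturated one, and pick $\bfa\in\grf(M)\setminus\psi(M)$.
\item By Zorn's lemma, choose a pp-type $p$ that is maximal among pp-types containing $\grf$, not containing $\psi$, and realized in $M$. Maximality means the condition ``realized in $M$'' is preserved under unions of chains, which uses algebraic compactness of $M$ together with the compactness theorem. Let $\bfb$ realize $p$ in $M$.
\item Let $U=H(\bfb)$ be the pure-injective hull of $\bfb$. It is a direct summand of $M$, as recalled in \S\ref{sec_purity}.
\item Since $U$ is a direct summand of $M$, each $(\grf'/\psi')(U)$ is a direct summand of $(\grf'/\psi')(M)$. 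So every pp-pair that is closed in $T$ vanishes on $U$, and $U\in\Cl(T)$ provided $U$ is indecomposable.
\item Moreover $\bfb\in\grf(U)\setminus\psi(U)$, so $(\grf/\psi)(U)\neq0$.
\end{itemize}

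\emph{Main obstacle.} The crux is showing that $H(\bfb)$ is indecomposable. Suppose $H(\bfb)=H_1\oplus H_2$ and write $\bfb=\bfb_1+\bfb_2$. Both $\bfb_i$ satisfy every formula of $p$, since pp-types only grow under the projections. If neither $\bfb_i$ satisfied $\psi$, maximality of $p$ would force $\tp^+(\bfb_i)=p$. The hull property then yields a pure, hence split, embedding $H(\bfb)\to H_i$, which contradicts minimality of the hull unless the other summand is zero. If instead some $\bfb_i$ satisfies $\psi$, one reruns the maximality argument with the other component, which gives the same conclusion. This is Ziegler's argument for hulls of maximal (``irreducible'') types~\cite[\S4]{Zg}. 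If the direct route stalls, I will simply cite it, since the rest of the proof is formal.
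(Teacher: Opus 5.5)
The paper gives no proof of this Fact; it cites Ziegler~\cite[Theorem~4.9 and Corollary~4.10]{Zg} and Prest~\cite[\S 5.1]{prest_second_book}. Your proposal is correct and is essentially the standard argument from those sources. Surjectivity is the basis computation in Step~1. Injectivity reduces, via Baur--Monk, to showing that every pp-pair $\varphi/\psi$ of infinite index in $T$ is open on some point of $\Cl(T)$, and you obtain that point as the hull of a $\psi$-maximal pp-type.

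There are two remarks.

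First, the upper bound for chains in your Zorn step comes from saturation, not from algebraic compactness. A parameter-free pp-type is always realized by $\mathbf{0}$. What you actually need is a realization of the countable set $p\cup\{\neg\psi\}$. It is finitely satisfiable in $M$, so it is realized because $M$ is $\aleph_0$-saturated. Thus $M$ must be taken saturated, for example $\aleph_1$-saturated, which also makes it pure-injective; being merely pure-injective is not enough. Otherwise your argument would give every nonzero pure-injective an indecomposable direct summand, which fails for superdecomposable ones. Pure-injectivity of $M$ is used only to make $H(\mathbf{b})$ a direct summand of $M$.

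Second, your \emph{main obstacle} paragraph already finishes the argument, so you do not need to fall back on a citation. Since $\mathbf{b}\notin\psi(M)$, at least one component $\mathbf{b}_i$ lies outside $\psi(M)$. That component also lies in $\varphi(M)$, so maximality gives $\tp^+(M,\mathbf{b}_i)=p$. Now extend the projection $H(\mathbf{b})\to H_i$ by $0$ on a complement of $H(\mathbf{b})$ in $M$. The hull property recalled in \S\ref{sec_purity} makes this map injective on $H(\mathbf{b})$, hence $H_{3-i}=0$.
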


\begin{definition}
	A closed subset $\cC \subseteq \Zg (R)$ is \textbf{totally transcendental} if $M_{\cC}$ is $\Sigma$-algebraically compact. Equivalently, any theory $T$ for which $\cC = \Cl (T)$ satisfies the descending chain condition on pp-formulae (since $T^{\gro}=\Th((M_{\cC})^{\gro})$, and a descending chain of pp-definable subgroups stabilizes in $M_{\cC}$ if and only if it stabilizes in any nonempty power of $M_{\cC}$).
\end{definition}

\begin{lemma} \label{tt_closed_set}
	If $R$ is countable, then every totally transcendental closed subset $\cC \subseteq \Zg (R)$ is countable and every point $U \in \cC$ is a countable indecomposable module. The module $M_{\cC}$ is therefore countable.
\end{lemma}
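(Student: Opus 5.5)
We need to prove that if $R$ is countable and $\cC\subseteq\Zg(R)$ is totally transcendental, then $\cC$ is countable, each $U\in\cC$ is countable, and hence $M_{\cC}$ is countable. Throughout, put $M=M_{\cC}$, which is $\Sigma$-algebraically compact by definition, and let $T=\Th(M^{\gro})$, which satisfies the DCC on pp-definable subgroups.

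\textbf{Each point is countable.} Let $U\in\cC$ and fix $0\neq a\in U$. Consider $p=\tp^+(U,a)$. Since $R$ is countable, there are only countably many pp-formulae in one variable. By DCC the intersection of the (countably many) formulae in $p$ is attained by a single formula $\grf_a\in p$: the finite conjunctions of members of $p$ form a downward directed family of pp-subgroups of $U$, and DCC gives a minimal one.

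Now let $U_0\preccurlyeq U$ be a countable elementary submodule containing $a$, which exists by downward L\"owenheim--Skolem. Elementary submodules are pure, and $U_0\equiv U$ is $\Sigma$-algebraically compact, since DCC is expressed by the theory (the pp-indices being $1$ in the theory). Hence $U_0$ is pure-injective, so $U_0$ is a direct summand of $U$. As $U$ is indecomposable and $U_0\neq0$, we get $U=U_0$, which is countable.

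\textbf{The closed set is countable.} Each point $U\in\cC$, together with a nonzero $a\in U$, is determined up to isomorphism by $\tp^+(U,a)$, by Fact~\ref{pointed_hull_fact}. It therefore suffices to show that only countably many pp-types arise. By the DCC argument above, each such type $p$ is generated by a single pp-formula $\grf_a$: every $\psi\in p$ satisfies $\grf_a(U)\subseteq\psi(U)$. We need this inclusion to be decided by $\grf_a$ alone, independently of $U$.

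Here we use the fact that $M$ itself satisfies DCC. For a pp-formula $\grf$, consider the pp-formulae of the form $\grf\wedge\psi$. In any $U\in\cC$, which is a direct summand of $M$, the relation $\grf(U)\subseteq\psi(U)$ is equivalent to $(\grf/\grf\wedge\psi)(U)=0$, but this depends on $U$.

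A cleaner route is to choose the element $a$ so that its type is determined. Take $\grf$ minimal, relative to $\cC$, among pp-formulae with $(\grf/\bfzero)(U)\neq0$. Minimal pairs in the tt setting exist by DCC, and then the type of any nonzero $a\in\grf(U)$ is
\[
p=\{\psi : \grf(M)\subseteq\psi(M)\}
\]
by minimality: if $\psi\notin p$ held for some $U$, then $\grf\wedge\psi$ would be smaller yet nonzero on $U$. This assigns to each $U$ a pp-formula $\grf_U$ such that the type of a suitable $a\in U$ equals $p_{\grf_U}$, which is determined by $\grf_U$ and $M$.

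The delicate step, and the main obstacle, is the minimality argument: one needs $\grf$ minimal with $\grf(U)\neq0$ and also $\grf\wedge\psi$ nonzero on $U$ forcing equality in $M$. I would instead follow Ziegler's approach, as in~\cite{Zg} and~\cite{garavaglia}: in a tt theory each indecomposable is the hull of a type isolated by a single pp-formula. This yields an injection $\cC\to\{\text{pp-formulae}\}$, which is a countable set.

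Finally, $M_{\cC}$ is a countable direct sum of countable modules, and is therefore countable.
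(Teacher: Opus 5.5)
Your proof is correct and is essentially the paper's. Each point is countable because a countable elementary submodule is pure and $\Sigma$-algebraically compact, hence a nonzero summand of the indecomposable $U$. The set $\cC$ is countable because, relative to the totally transcendental theory, every pp-type is generated by a single pp-formula, which by Fact~\ref{pointed_hull_fact} determines the point.

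The minimality step you call delicate is already complete as you wrote it, so you need not fall back on citing Ziegler. If $a\in\psi(U)$, then $\grf\wedge\psi$ is nonzero on $U$, so minimality of $\grf(M)$ gives $\grf(M)\subseteq\psi(M)$. Conversely, $\grf(M)\subseteq\psi(M)$ gives $a\in\psi(U)$ because $U$ is a summand of $M$.
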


\begin{proof}
	To see that $\cC$ is countable, let $T \supseteq {_R}T$ be a complete theory for which $\cC = \Cl (T)$. Then $T$ is totally transcendental and if $U \in \cC$ and $a \in U$ is nonzero, there is a pp-formula $\grf (u)$ such that 
	$$T \vdash \tp^+ (U,a) \leftrightarrow \grf (u).$$ Since there are only countably many pp-formulae, there can only be countably many pp-types $p(u)$, relative to $T$, and every $U \in \Cl (T)$ is the pure-injective hull $H(p)$ of such a pp-type.  
	
\smallskip \noindent 	Every $U \in \cC$ is $\Sigma$-algebraically compact, so if $V \prec U$ is a countable elementary submodule, then it too is $\Sigma$-algebraically compact. But an elementary embedding is pure, so that $V$ is necessarily a direct summand of the indecomposable $U$, whence $U = V$ is countable. 
\end{proof}

Let ${_R}M$ be a module and put $S=\End_R M$. We write multiplication in $S$ in the order $fg=g\circ f$, so that $af=f(a)$ defines a right action on $M$. Endomorphisms preserve pp-formulae, and hence every pp-definable subgroup $\grf(M)\subseteq M^n$ is a right $S$-submodule under the coordinatewise action. The same is true of the quotient $(\grf/\psi)(M)$ associated with a pp-pair.

A module $V$ is \emph{endofinite} if its \emph{endolength}, the length of $V$ as a right module over $S=\End_R V$, is finite. Finite length means that there is a finite composition series $0=V_0\subsetneq V_1\subsetneq\cdots\subsetneq V_\ell=V$ of right $S$-submodules, with each quotient $V_i/V_{i-1}$ simple (nonzero and with no proper nonzero submodule). Equivalently, $V_S$ satisfies both the ACC and the DCC on submodules. In particular, every strict chain of submodules has finitely many inclusions.

We will see, in Section~\ref{sec_main_theorem}, that the existence of an uncountable Polish module with a Kaplansky decomposition is equivalent to the existence of a nonempty totally transcendental closed subset $\cC \subseteq \Zg (R)$. We collect some well-known equivalent conditions needed below. The first equivalence of the next fact is the $\mrm{CB}$-$\mrm{rk}=0$ case of a deep result~\cite[8.6]{Zg} of Ziegler that equates the $\mrm{CB}$-rank~\cite[\S 6.C]{K1} of $\Op (\grf/\psi) \cap \Cl (T)$ and the $m$-dimension of the interval $[\psi, \grf]_T$ in the lattice of pp-formulae, modulo logical equivalence relative to $T$; it is not known if the forward direction holds when the ring $R$ is uncountable. 

\begin{fact}[{\cite[8.6]{Zg},~\cite[\S 4.4.3]{prest_second_book}}]\label{fact_closed_point} Let $R$ be countable. The following are equivalent for a point $U \in \Zg (R)$:
	\begin{enumerate}[(1)]
		\item $U$ is closed, i.e., the singleton $\{ U \} \subseteq \Zg (R)$ is a closed set;
		\item the module ${_R}U$ has the ACC and the DCC on pp-definable subgroups;
		\item ${_R}U$ is endofinite, i.e., of finite length as a right module over the endomorphism ring $S = \End_R U$.
	\end{enumerate} 
\end{fact}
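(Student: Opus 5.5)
The plan is to treat (2)$\Leftrightarrow$(3) by a direct module-theoretic argument, and to reduce (1)$\Leftrightarrow$(2) to Ziegler's theorem~\cite[8.6]{Zg} relating Cantor--Bendixson rank and $m$-dimension. Countability of $R$ is needed only in the second part.

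\emph{(3)$\Rightarrow$(2).} Let $S=\End_R U$. As recalled above, every pp-definable subgroup $\grf(U)\subseteq U^n$ is a right $S$-submodule of $U^n$. If $U_S$ has length $\ell$, then $U^n_S$ has length $n\ell$. Hence any strictly monotone chain of pp-definable subgroups of $U^n$ has at most $n\ell$ strict inclusions, which gives both ACC and DCC.

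\emph{(2)$\Rightarrow$(3).} By the DCC, $U$ is $\Sigma$-algebraically compact, in particular pure-injective.
\begin{enumerate}[(i)]
\item Fix $a\in U$. Since $U$ is pure-injective, \cite[Corollary~3.3]{Zg} gives
\[aS=\{b\in U\colon \tp^+(U,a)\subseteq\tp^+(U,b)\}.\]
\item By the DCC, the type $\tp^+(U,a)$ is equivalent in $U$ to a single pp-formula $\grf_a$, namely a minimal finite conjunction of its members. Hence $aS=\grf_a(U)$ is pp-definable.
\item Every finitely generated $S$-submodule $a_1S+\cdots+a_kS=(\grf_{a_1}+\cdots+\grf_{a_k})(U)$ is therefore pp-definable.
\item The ACC on pp-definable subgroups makes $U_S$ noetherian, so every $S$-submodule is finitely generated and hence pp-definable.
\item The DCC on pp-definable subgroups now makes $U_S$ artinian, so $U_S$ has finite length.
\end{enumerate}

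\emph{(1)$\Leftrightarrow$(2).} I will invoke Ziegler's theorem in its rank-$0$ case, since the singleton $\{U\}$ is closed exactly when $U$ is isolated in its own closure $\Cl(U)=\Cl(\Th(U))$.
\begin{enumerate}[(i)]
\item For countable $R$, \cite[8.6]{Zg} equates the CB-rank of $\Op(\grf/\psi)\cap\Cl(T)$ with the $m$-dimension of the interval $[\psi,\grf]_T$ of pp-formulae modulo $T$.
\item $m$-dimension $0$ for every interval $[\psi,\grf]_{\Th(U)}$ means these intervals are of finite length. This is exactly ACC together with DCC on pp-definable subgroups of $U$, in every number of variables.
\end{enumerate}

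It remains to match "$U$ closed" with "$\Cl(U)$ has CB-rank $0$". First suppose $U$ is closed. Then $\Cl(U)=\{U\}$, every nonempty basic set $\Op(\grf/\psi)\cap\Cl(U)$ equals $\{U\}$, and so has CB-rank $0$. Conversely, suppose all intervals have finite length. Pick $0\neq a\in U$ and let $\grf_a$ generate $\tp^+(U,a)$ modulo $\Th(U)$. Choose $\psi<\grf_a$ maximal below it, which exists by the ACC. Then $\Op(\grf_a/\psi)\cap\Cl(U)$ is a rank-$0$ set, that is, a finite discrete set. A point $V$ of this set carries an element with pp-type generated by $\grf_a$ in the theory $\Th(U)$. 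By Fact~\ref{pointed_hull_fact}, $V\cong U$.

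The main obstacle is the direction (2)$\Rightarrow$(1) without leaning wholly on~\cite{Zg}. Concretely, one must show that every $V\in\Cl(U)$ realizes, in some nonzero element, a pp-type that is also realized in $U$. I would try to get this from the fact that $\Cl(U)$ is totally transcendental, so each point is the hull of a pp-type isolated by one formula (as in Lemma~\ref{tt_closed_set}). Failing that, I would simply cite the rank-$0$ case of~\cite[8.6]{Zg}, as the paper does.
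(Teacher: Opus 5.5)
The paper gives no proof of this Fact. It cites Ziegler's rank theorem~\cite[8.6]{Zg} for (1)$\Leftrightarrow$(2), together with~\cite[\S 4.4.3]{prest_second_book}, so your overall plan matches it.

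Your direct proof of (2)$\Leftrightarrow$(3) is correct: identifying $aS=\grf_a(U)$ from pure-injectivity and the DCC, and then getting noetherianity from the ACC on finitely generated $S$-submodules, is the standard argument. Your proof of (1)$\Rightarrow$(2) via the rank-$0$ case is also fine.

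The gap is in (2)$\Rightarrow$(1).
\begin{itemize}
\item Your opening reduction is false. You claim $\{U\}$ is closed exactly when $U$ is isolated in $\Cl(U)$, but $\Zg(R)$ need not be $T_1$. In $\Zg(\mathbb{Z})$ the Pr\"ufer group $\mathbb{Z}_{p^\infty}$ has closure $\{\mathbb{Z}_{p^\infty},\mathbb{Q}\}$. It is isolated in that closure by $\Op(pu\doteq 0/u\doteq 0)$, yet it is not closed.
\item Your $\grf_a$-argument proves only this isolation. It identifies the points of one neighbourhood of $U$ and says nothing about points of $\Cl(U)$ outside $\Op(\grf_a/\psi)$. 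That is exactly the obstacle you flag.
\item Even the isolation step is not justified as written. An element of $\grf_a(V)\setminus\psi(V)$ realizes $\tp^+(U,a)$ only because $\psi$ is the \emph{unique} maximal proper subformula of $\grf_a$ modulo $\Th(U)$. This uniqueness holds because $\grf_a(U)=aS$ is cyclic over the local ring $S$. Hence every proper pp-definable subgroup of $aS$ lies in $aJ(S)$, which is itself pp-definable by your step~(iv).
\end{itemize}

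With the citation you already use, the repair is immediate. The set $\Op(u\doteq u/u\doteq 0)\cap\Cl(U)=\Cl(U)$ has rank $0$. So every $V\in\Cl(U)$ has an open neighbourhood meeting $\Cl(U)$ only in $V$. Since $V\in\Cl(U)=\overline{\{U\}}$, that neighbourhood contains $U$, and therefore $V\cong U$.

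The elementary route you hinted at also works. It shows that (2)$\Rightarrow$(1) needs neither~\cite[8.6]{Zg} nor countability, consistent with the paper's remark that only the forward direction is in doubt for uncountable $R$. Here are the steps.
\begin{itemize}
\item Take $0\neq b\in V\in\Cl(U)$ and put $q=\tp^+(V,b)$. Every pp-pair vanishing on $U$ vanishes on $V$, so $q$ is upward closed modulo $\Th(U)$.
\item The DCC in $U$ then gives $\grf_b\in q$ with $\grf_b(U)\subseteq\rho(U)$ for all $\rho\in q$. By pure-injectivity, as in your step~(i), $\grf_b(V)=b\End_R V$.
\item For $\rho\notin q$, each $(\grf_b\wedge\rho)(V)$ is a proper submodule of the cyclic module $b\End_R V$ over the local ring $\End_R V$. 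It therefore lies in $bJ(\End_R V)$, so finite sums of such formulae stay outside $q$.
\item The ACC in $U$ gives a largest such sum $\psi_b$ modulo $\Th(U)$.
\item Now $(\grf_b/\psi_b)(V)\neq 0$ forces $(\grf_b/\psi_b)(U)\neq 0$. Any $c\in\grf_b(U)\setminus\psi_b(U)$ then satisfies $\tp^+(U,c)=q$, so Fact~\ref{pointed_hull_fact} gives $V\cong U$.
\end{itemize}
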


The implication ($1$) $\Rightarrow$ ($2$) of Fact~\ref{fact_closed_point} implies that if $U \in \Zg (R)$ is a closed point, then the closed subset $\{ U \} \subseteq \Zg (R)$ is totally transcendental. The converse is another application of Ziegler's Theorem~\cite[8.6]{Zg}, which implies more generally that if $T$ is a complete theory in which there does not exist a dense chain of pp-definable subgroups, then $\Cl (T)$ contains a closed point. A totally transcendental theory clearly satisfies this condition so we can state this as follows. \bigskip

\begin{fact}[{\cite[8.6]{Zg}}] Let $R$ be countable.
	There exists a nonempty totally transcendental closed set $\cC \subseteq \Zg (R)$ if and only if there exists a closed point $U \in \Zg (R)$.
\end{fact}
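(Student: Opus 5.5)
The plan is to prove the two directions separately. The backward direction is almost formal. The forward direction reduces to finding a closed point inside a minimal nonempty closed subset of $\cC$.

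\smallskip\noindent\emph{($\Leftarrow$)} Let $U\in\Zg(R)$ be a closed point. Then $\{U\}$ is a closed subset and $M_{\{U\}}=U$. By the implication (1)$\Rightarrow$(2) of Fact~\ref{fact_closed_point}, $U$ has the DCC on pp-definable subgroups. By the characterization recalled in Definition~\ref{gava_ex}, this means $U$ is $\Sigma$-algebraically compact. Hence $\{U\}$ is a nonempty totally transcendental closed set.

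\smallskip\noindent\emph{($\Rightarrow$)} Let $\cC\neq\emptyset$ be totally transcendental.

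\begin{enumerate}[(1)]
\item \emph{A minimal nonempty closed subset.} The Ziegler spectrum is quasi-compact, so its closed subset $\cC$ is quasi-compact. The intersection of a chain of nonempty closed subsets of $\cC$ is therefore nonempty. By Zorn's lemma there is a minimal nonempty closed subset $\mathcal D\subseteq\cC$.

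\item \emph{$\mathcal D$ is totally transcendental.} The module $M_{\mathcal D}$ is a direct summand of $M_{\cC}$. Pp-definable subgroups of a summand are the corresponding components of those of $M_{\cC}$, so the DCC passes to $M_{\mathcal D}$.

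\item \emph{$\mathcal D$ is the closure of each of its points.} By minimality, for every $V\in\mathcal D$ the closure of $\{V\}$ equals $\mathcal D$.

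\item \emph{An isolated point of $\mathcal D$.} Put $T=\Th((M_{\mathcal D})^{\gro})$; this theory has the DCC on pp-formulae. Choose a pp-pair $\grf/\psi$ with $T\vdash[\grf\colon\psi]=\infty$. Using the DCC, shrink $\grf$ to a pp-formula $\grf'$ that is minimal, modulo $T$, among pp-formulae strictly above $\psi$ in the interval $[\psi,\grf]_T$. This makes $\grf'/\psi$ a \emph{minimal pair} for $T$. I would then invoke the standard consequence of Ziegler's analysis~\cite{Zg} (see also~\cite[\S 5.3]{prest_second_book}): a minimal pair for $T$ isolates exactly one point of $\Cl(T)=\mathcal D$. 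Concretely, $\Op(\grf'/\psi)\cap\mathcal D=\{U\}$. The argument is that two points opened by a minimal pair realize the same pp-type of a witness $a$ with $\grf'(a)\wedge\neg\psi(a)$. Minimality forces that pp-type to be generated by $\grf'$ together with the negations of formulae below it. One then concludes with Fact~\ref{pointed_hull_fact}.

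\item \emph{Conclusion.} Let $V\in\mathcal D$. By step~(3), $U$ lies in the closure of $\{V\}$. So every open set containing $U$ contains $V$; in particular $V\in\Op(\grf'/\psi)\cap\mathcal D=\{U\}$. Hence $\mathcal D=\{U\}$ is closed, and $U$ is a closed point.
\end{enumerate}

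The main obstacle is the isolation statement in step~(4): that a minimal pair cuts out a single point of $\Cl(T)$. This is where Ziegler's theory, the $\mrm{CB}$-rank~$0$ case of~\cite[8.6]{Zg}, is really used, and it is the step I would cite rather than reprove. If a direct argument were required, I would identify the pp-type of $a$ and apply Fact~\ref{pointed_hull_fact}, as outlined in step~(4).
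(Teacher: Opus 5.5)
Your proposal is correct and is essentially the paper's argument. The ($\Leftarrow$) direction is exactly the remark preceding the Fact. Your ($\Rightarrow$) direction follows the proof of Lemma~\ref{tt_endofinite_point}: quasi-compactness and Zorn give a minimal nonempty closed $\mathcal D\subseteq\cC$, the DCC passes to the summand $M_{\mathcal D}$, and an isolated point of $\mathcal D$ forces $\mathcal D$ to be a singleton. Your step~(5), via $\overline{\{V\}}=\mathcal D$, is equivalent to the paper's observation that $\mathcal D\setminus\{V\}$ would otherwise be a proper nonempty closed subset.

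The only real difference is where the isolated point comes from. The paper cites Ziegler's rank theorem~\cite[Lemma~8.4(2) and Theorem~8.6]{Zg}. You instead read a minimal pair directly off the DCC and use the more elementary fact that a minimal pair opens exactly one point of $\Cl(T)$. That is legitimate, and this input does not even need $R$ countable.

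Be careful, however, with the direct justification you sketch for that fact. For a general lower formula $\psi$, minimality does \emph{not} determine the pp-type of a witness. Take $R=M_2(k)$ with $k$ a countable field and $S=k^2$ the simple module. The one-variable pp-lattice modulo $\Th(S^{\gro})$ is the subspace lattice of $k^2$. The pair $(u\doteq u)/(e|u)$, with $e$ a rank-one idempotent, is minimal, yet witnesses in $S\setminus eS$ lying on different lines have different pp-types.

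Your sketch does work if you take $\psi$ to be $u\doteq 0$ in one variable, so that $\grf'$ is a $T$-minimal formula with $\grf'\not\equiv 0$. Then for every pp $\rho$, $\grf'\wedge\rho$ is $T$-equivalent either to $\grf'$ or to $0$. Hence every nonzero $a\in\grf'(N)$, $N\in\mathcal D$, has pp-type $\{\rho : T\vdash\grf'\to\rho\}$, and Fact~\ref{pointed_hull_fact} gives uniqueness. This is the same argument the paper runs with $\grf_W$ in the proof of Theorem~\ref{main_th_sec}.

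For a general minimal pair one argues differently. Minimality plus algebraic compactness give maps $f\colon N\to N'$ and $g\colon N'\to N$ with $(g\circ f)(a)\in a+\psi(N)$. Since $a\notin\psi(N)$ and endomorphisms preserve $\psi(N)$, locality of $\End_R N$ forces $g\circ f$ to be an automorphism, whence $N\cong N'$.
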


Finally, let us note that the existence of an endofinite point $U \in \Zg (R)$ is a left-right symmetric condition for the ring $R$.

\begin{proposition}\label{endofinite_left_right}
Let $R$ be countable. The Ziegler spectrum $\Zg(R)$ contains an endofinite point if and only if $\Zg(R^{\op})$ does.
\end{proposition}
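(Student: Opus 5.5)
The plan is to use elementary duality, due to Prest, for pp-formulae, together with Herzog's induced correspondence between the Ziegler spectra of $R$ and $R^{\op}$. By symmetry it suffices to prove one direction, so suppose $U\in\Zg(R)$ is an endofinite point.

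The first step translates endofiniteness into model theory. By Fact~\ref{fact_closed_point}, $U$ is a closed point, and $U$ has both the ACC and the DCC on pp-definable subgroups of $U^n$, for every $n$. Put $T=\Th(U^{\gro})$. Then $\Cl(T)=\{U\}$, by Fact~\ref{Ziegler_topology}. For each $n$, the lattice $L_n(T)$ of pp-formulae in $n$ free variables, modulo equivalence in $T$, is isomorphic to the lattice of pp-definable subgroups of $U^n$. Hence $L_n(T)$ has both chain conditions.

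The second step applies duality. Prest's elementary duality $\grf\mapsto D\grf$ is an anti-isomorphism between the lattice of left pp-formulae in $n$ variables and the lattice of right pp-formulae in $n$ variables. Herzog showed that it induces a lattice isomorphism between the open sets of $\Zg(R)$ and of $\Zg(R^{\op})$, with $\Op(\grf/\psi)\mapsto\Op(D\psi/D\grf)$. Passing to complements, each closed set $\cC\subseteq\Zg(R)$ corresponds to a dual closed set $D\cC\subseteq\Zg(R^{\op})$. If $T'$ is the $1$-or-$\infty$ theory with $\Cl(T')=D\cC$, given by Fact~\ref{Ziegler_topology}, then $L_n(T')$ is anti-isomorphic to $L_n(T)$. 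This holds because a pp-pair is closed in $T$ exactly when its open set misses $\cC$, and duality preserves this property.

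Since $U\neq0$, some pp-pair $\grf/\psi$ has $U\in\Op(\grf/\psi)$. The dual pair $D\psi/D\grf$ is then not collapsed in $T'$, so $D\{U\}\neq\emptyset$. Because $L_n(T)$ satisfies the ACC, its opposite $L_n(T')$ satisfies the DCC. So the nonempty closed set $D\{U\}$ is totally transcendental.

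The final step invokes the Fact stated just before this proposition, which uses Ziegler's~\cite[8.6]{Zg} for countable rings; $R^{\op}$ is countable. It gives a closed point $V\in D\{U\}\subseteq\Zg(R^{\op})$, and $V$ is endofinite by Fact~\ref{fact_closed_point}. In fact $L_n(T')$ has both chain conditions, so one expects $D\{U\}$ to be the single point $V$. We only need existence, however.

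The main obstacle is the second step. We must verify precisely that the duality of closed sets yields the anti-isomorphism $L_n(T')\cong L_n(T)^{\op}$, and that $D\{U\}$ is nonempty. I would quote Herzog's theorem on the lattice isomorphism of Ziegler topologies for this, rather than reproving it. As an alternative route, one could take $V=\operatorname{Hom}_S(U,E)$, where $S=\End_R U$ and $E$ is an injective cogenerator for right $S$-modules. Its pp-subgroups are annihilators of those of $U$, so endofiniteness transfers. The difficulty there is that this dual need not be indecomposable, and one would then pass to an indecomposable summand using the chain conditions.
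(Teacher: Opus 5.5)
Your argument is correct, but it takes a different route from the paper's. The paper works with elementary duality of complete theories, $T=\Th(U)\mapsto T'$, where $T\vdash\psi\to\grf$ if and only if $T'\vdash D\grf\to D\psi$. It takes any model $N\models T'$, which is nonzero and $\Sigma$-algebraically compact. Garavaglia's theorem then gives an indecomposable pure-injective summand $U'$ of $N$. Both chain conditions pass from $T'$ to $U'$, and the implication (2)$\Rightarrow$(3) of Fact~\ref{fact_closed_point} makes $U'$ endofinite.

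You instead use Herzog's isomorphism of the lattices of open sets. You dualize the closed set $\{U\}$ to $D\{U\}$, which requires passing to the $1$-or-$\infty$ theory $\Th(U^{\gro})$ so that Fact~\ref{Ziegler_topology} applies. You need only the ACC on the $R$-side, and you then extract a closed point from Ziegler's rank theorem (the Fact preceding the proposition) for the countable ring $R^{\op}$, finishing with (1)$\Rightarrow$(3).

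The trade-off is as follows. The paper's final step uses only the chain-condition characterization (2)$\Rightarrow$(3), with no isolation or Cantor--Bendixson argument on the $R^{\op}$ side. Your route leans on the deep part of~\cite[8.6]{Zg}. In exchange, it locates the endofinite point inside a specific closed set. Since a lattice isomorphism of open sets preserves coatoms, $D\{U\}$ is a minimal nonempty closed set. Being totally transcendental, it is therefore a singleton, exactly as you anticipated.

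One small point: the Fact as stated only produces a closed point somewhere in $\Zg(R^{\op})$. To place it in $D\{U\}$, cite Lemma~\ref{tt_endofinite_point} instead. Mere existence suffices for the proposition, though.
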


\begin{proof}
Elementary duality~\cite[Theorem~6.6]{Herzog_duality} associates to a complete theory $T$ of left $R$-modules a complete theory $T'$ of right $R$-modules. If $D$ denotes the Prest duality~\cite[\S 1.3.1]{prest_second_book} on pp-formulae, then
\[
 T\vdash\psi\to\grf
 \quad\Longleftrightarrow\quad
 T'\vdash D\grf \to D\psi,
\]
and applying the duality twice returns $T$. Thus the pp-lattices modulo $T$ and $T'$ are order anti-isomorphic, so the ACC and DCC are interchanged.

\medskip \noindent Let $T=\Th(U)$ for a nonzero indecomposable endofinite left $R$-module $U$. By Fact~\ref{fact_closed_point}, both chain conditions hold modulo $T$, and hence also modulo $T'$. The least and greatest pp-formulae in one variable are distinct modulo $T$; their duals are therefore distinct modulo $T'$, so a model $N\models T'$ is nonzero. Its DCC makes $N$ $\Sigma$-algebraically compact. Garavaglia's decomposition theorem~\cite{garavaglia} gives a nonzero indecomposable pure-injective summand $U'$ of $N$. Both chain conditions pass to $U'$, and Fact~\ref{fact_closed_point}, applied to $R^{\op}$, makes $U'$ endofinite. The same argument with $R^{\op}$ proves the converse.

\end{proof}

\subsection{The endomorphism action and endolength}\label{sec_endomorphisms}

We collect further facts about the endomorphism action and finite endolength used in the existence results of Section~\ref{sec_applications}, especially Subsections~\ref{sec_projective} and~\ref{sec_kaplansky_existence}.
Let ${_R}M$ be a module and put $S=\End_R M$, with the multiplication convention fixed in Subsection~\ref{sec_ziegler}.

\begin{lemma}\label{endo_fg_projection}
Suppose that, for every quantifier-free pp-formula $\gra(\bfv)$, the right $S$-module $\gra(M)$ is finitely generated. Then $\grf(M)$ is finitely generated as a right $S$-module for every pp-formula $\grf(\bfu)$.
\end{lemma}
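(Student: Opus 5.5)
The plan is to write a pp-definable subgroup as the image, under an $S$-linear map, of the solution set of a quantifier-free pp-formula; images of finitely generated modules are then finitely generated. Let $\grf(\bfu)=\exists \bfv\,(A\bfu^t\doteq B\bfv^t)$ as in Definition~\ref{def_pp_formula}, with $|\bfu|=n$ and $|\bfv|=k$. I introduce the quantifier-free pp-formula
\[
\gra(\bfu,\bfv):= (A\bfu^t \doteq B\bfv^t)
\]
in the $n+k$ free variables $(\bfu,\bfv)$. Its solution set is $\gra(M)\subseteq M^{n+k}$, and by hypothesis it is a finitely generated right $S$-module, say generated by $(\bfa_1,\bfb_1),\dots,(\bfa_m,\bfb_m)$.

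Next I consider the coordinate projection $\pi\colon M^{n+k}\to M^n$, $(\bfa,\bfb)\mapsto \bfa$. By definition of $\grf$, an $n$-tuple $\bfa$ satisfies $\grf$ in $M$ exactly when some $\bfb$ has $(\bfa,\bfb)\in\gra(M)$. Hence $\pi(\gra(M))=\grf(M)$.

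I also need $\pi$ to be a morphism of right $S$-modules. The $S$-action on $M^{n+k}$ and on $M^n$ is coordinatewise, with $(c_1,\dots,c_{n+k})f=(f(c_1),\dots,f(c_{n+k}))$. Deleting the last $k$ coordinates commutes with applying $f$ in each coordinate, so $\pi$ is $S$-linear. The restriction of $\pi$ to the $S$-submodule $\gra(M)$ is therefore an $S$-linear surjection onto $\grf(M)$. It follows that $\grf(M)$ is generated by $\bfa_1,\dots,\bfa_m$.

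There is no real obstacle. The only point needing care is that $\gra$ is a genuine quantifier-free pp-formula in the enlarged variable tuple, so that the hypothesis applies to it. This is clear, since it is a finite system of linear equations. One also uses that the $S$-module structure on $\gra(M)$ is the restriction of the coordinatewise one, which holds by the remark in Subsection~\ref{sec_ziegler} that endomorphisms preserve pp-definable subgroups.
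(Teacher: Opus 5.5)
Your proposal is correct and matches the paper's proof: you write $\grf(\bfu)=\exists\bfv\,\gra(\bfu,\bfv)$ with $\gra$ quantifier-free, and you observe that the coordinate projection $\gra(M)\to\grf(M)$ is a surjective $S$-linear map. This makes $\grf(M)$ a quotient of the finitely generated right $S$-module $\gra(M)$, hence finitely generated.
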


\begin{proof}
Write $\grf(\bfu)=\exists\bfv\,\gra(\bfu,\bfv)$ with $\gra$ quantifier-free. The projection $\pi\colon\gra(M)\to\grf(M)$, $(\bfa,\bfb)\mapsto\bfa$, is surjective. For $f\in S$ we have $\pi((\bfa,\bfb)f)=\bfa f=\pi(\bfa,\bfb)f$, so $\pi$ is $S$-linear. Thus $\grf(M)$ is a quotient of the finitely generated right $S$-module $\gra(M)$, and is itself finitely generated.
\end{proof}

For an endofinite module $V$, put $S=\End_R V$. If $V_S$ has length $\ell$, then $(V^n)_S$ has length $n\ell$. Its pp-definable subgroups are $S$-submodules, so every descending chain of them stabilizes. Hence an endofinite module is $\Sigma$-algebraically compact. In particular, an indecomposable endofinite module is pure-injective and has a local endomorphism ring~\cite[Theorem~4.3]{Zg}.

\begin{lemma}\label{tt_endofinite_point}
Let $R$ be countable. Every nonempty totally transcendental closed subset $\cC\subseteq\Zg(R)$ contains an indecomposable endofinite module.

\end{lemma}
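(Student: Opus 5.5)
The plan is to find a closed point of $\Zg(R)$ inside $\cC$ and then apply Fact~\ref{fact_closed_point}. Since $\cC$ is closed in $\Zg(R)$, a point that is closed in the subspace $\cC$ is also closed in $\Zg(R)$. By Fact~\ref{fact_closed_point}, such a point is endofinite. It is indecomposable because every point of $\Zg(R)$ is.

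To produce the point, let $T=\Th((M_{\cC})^{\gro})$, so that $\Cl(T)=\cC$ by Fact~\ref{Ziegler_topology}. Since $\cC$ is totally transcendental, $T$ satisfies the DCC on pp-formulae modulo $T$. Because $\cC\neq\emptyset$, there is some pp-pair $\grf/\psi$ in one variable, for instance $(u\doteq u)/(u\doteq 0)$, that is open on $\cC$. By the DCC, I can choose a pp-pair $\grf/\psi$ with $\Op(\grf/\psi)\cap\cC\neq\emptyset$ that is minimal, i.e.\ such that $[\psi,\grf]_T$ is a minimal pair. Concretely, among the pp-formulae $\grf$ with $T\vdash[\grf\colon\psi]=\infty$, the DCC lets me take $\grf$ minimal over $\psi$, and then $\grf/\psi$ has no proper intermediate pp-formula modulo $T$.

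Next I show that $\Op(\grf/\psi)\cap\cC$ is a single point $U$. Take $U,U'$ in this set, and pick $a\in\grf(U)\setminus\psi(U)$ and $a'\in\grf(U')\setminus\psi(U')$. By minimality, for any pp-formula $\chi$ the formula $\psi+(\grf\wedge\chi)$ is either $\psi$ or $\grf$ modulo $T$. I then want to show that $\tp^+(U,a)$ and $\tp^+(U',a')$ can be adjusted, by replacing $a'$ with $a'+b'$ for suitable $b'\in\psi(U')$, to coincide. After that, Fact~\ref{pointed_hull_fact} gives $U\cong U'$. This is the standard Ziegler argument that a minimal pair isolates a point; it is the $\mrm{CB}$-rank $0$ case of~\cite[8.6]{Zg}, which the paper already cites.

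The cleanest route is to quote~\cite[8.6]{Zg} directly, since it is in fact the content of the Fact preceding Proposition~\ref{endofinite_left_right}. Because $\cC$ is totally transcendental, it contains no dense chain of pp-definable subgroups, and therefore it contains a point that is isolated in $\cC$. The remaining step, and the one I expect to be the main obstacle, is upgrading "isolated in $\cC$" to "closed." My first attempt would be to use induction on the (ordinal) $\mrm{CB}$-rank of the compact space $\cC$, which is countable by Lemma~\ref{tt_closed_set}. Every nonempty closed subset of a compact scattered space contains a point of maximal rank, so the last nonempty Cantor--Bendixson derivative $\cC^{(\alpha)}$ is finite by compactness. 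Its points are closed in $\cC$ provided the spectrum restricted to $\cC$ is $T_0$. Should that fail, I would instead invoke the equivalence in the Fact directly: a nonempty totally transcendental $\cC$ contains a closed point $U$. I would then verify that $U\in\cC$ by noting that the Fact's proof takes $U\in\Cl(T)$ for $T$ with $\Cl(T)=\cC$. Finally, Fact~\ref{fact_closed_point} $(1)\Rightarrow(3)$ shows that $U$ is endofinite.
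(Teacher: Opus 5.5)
Your argument is correct in substance, but it organizes the key step differently from the paper.

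\textbf{The two routes.} The paper uses quasi-compactness of $\cC$ and Zorn's lemma to get a minimal nonempty closed subset $\mathcal D\subseteq\cC$. It applies Ziegler's isolation theorem once, to $\mathcal D$, and minimality then forces $\mathcal D=\{V\}$. You instead run the Cantor--Bendixson derivation on $\cC$ and take its last nonempty stage $\cC^{(\alpha)}$.

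Both routes rest on the same two ingredients. The first is quasi-compactness: decreasing intersections of nonempty closed sets stay nonempty. This is what makes your derivation stop at a successor stage, and what makes Zorn applicable in the paper. The second is the existence of isolated points in totally transcendental closed sets. The paper's version is shorter and uses isolation only once. Yours replaces Zorn by an explicit transfinite recursion and gives slightly more information: the top-rank points of $\cC$ are finitely many, and each is closed.

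\textbf{Remarks on your write-up.}

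First, for the derivation to terminate you need an isolated point in every $\cC^{(\beta)}$, not only in $\cC$. So you must observe, as the paper does, that a closed subset of a totally transcendental closed set is again totally transcendental, since its representative module is a direct summand of $M_{\cC}$. Countability of $\cC$ (Lemma~\ref{tt_closed_set}) does not by itself give scatteredness in this non-Hausdorff space.

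Second, your $T_0$ worry is unfounded, and the fallback is unnecessary. Every point of $\cC^{(\alpha)}$ is isolated in $\cC^{(\alpha)}$, so this subspace is discrete and each of its singletons is closed in it. Since $\cC^{(\alpha)}$ is closed in $\Zg(R)$, each such singleton is closed in $\Zg(R)$; finiteness is not even needed. Falling back on the Fact would in any case amount to citing essentially the statement being proved.

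Third, in the minimal-pair sketch you cannot in general adjust $a'$ within $a'+\psi(U')$ to match an \emph{arbitrary} $\tp^+(U,a)$. The standard argument fixes a pp-type $p$ that is maximal among those containing $\grf$ and omitting $\psi$. It then uses minimality of $\grf/\psi$ together with algebraic compactness to move \emph{both} $a$ and $a'$ within their $\psi$-cosets to realizations of $p$. Since you ultimately cite \cite[8.6]{Zg}, as the paper does, this does not affect correctness.
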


\begin{proof}
The Ziegler spectrum is quasi-compact~\cite[Theorem~4.9]{Zg}, so its closed subspace $\cC$ is quasi-compact. Every descending chain of nonempty closed subsets of $\cC$ has nonempty intersection. Zorn's lemma therefore gives a minimal nonempty closed subset $\mathcal D\subseteq\cC$.

\smallskip\noindent
Every closed subset of a totally transcendental closed set is again totally transcendental: its module of representatives is a direct summand of $M_{\cC}$, and thus inherits the DCC on pp-definable subgroups. Ziegler's rank theorem~\cite[Lemma~8.4(2) and Theorem~8.6]{Zg} implies that the nonempty space $\mathcal D$ has an isolated point $V$. If $\mathcal D\setminus\{V\}$ were nonempty, it would be a proper nonempty closed subset of $\mathcal D$, contrary to minimality. Hence $\mathcal D=\{V\}$ is closed in $\Zg(R)$. Fact~\ref{fact_closed_point} now shows that $V$ is endofinite.
\end{proof}

\section{The model theory of Polish modules}

\begin{definition}\label{def:polish-module}
	Let $R$ be a countable ring, endowed with the discrete topology. A
	\emph{Polish $R$-module} is an $R$-module $A$ equipped with a topology
	$\tau$ such that:
	\begin{enumerate}[(1)]
		\item $(A,\tau)$ is a Polish space (i.e., it is separable and completely metrizable);
		\item $(A,+,-,0,\tau)$ is a topological group;
		\item the map $R\times A\to A\colon (r,x)\mapsto rx$ is continuous.
	\end{enumerate}
\end{definition}

The solution set of a quantifier-free pp-formula is the kernel of a continuous homomorphism between finite powers of a Polish module, in more detail:

\begin{lemma} \label{pp_qf} If $A$ is a Polish $R$-module and $\gra (\bfu)$ a quantifier-free pp-formula, then $\gra(A)$ is a closed subgroup of $A^{|\bfu|}$.
\end{lemma}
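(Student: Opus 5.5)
The plan is to exhibit $\gra(A)$ as the kernel of a continuous group homomorphism between finite powers of $A$, and then use that the kernel of a continuous homomorphism into a Hausdorff group is closed. Write the quantifier-free pp-formula as a finite system of linear equations
\[
\gra(\bfu) \;=\; \bigwedge_{k=1}^{m} \Bigl( \sum_{j=1}^{n} r_{kj} u_j \doteq 0 \Bigr),
\]
with $r_{kj}\in R$ and $n=|\bfu|$. Any equation $t_1 \doteq t_2$ between terms can first be rewritten as $t_1 - t_2 \doteq 0$; modulo ${_R}T$ this is again a single linear combination of the variables set equal to zero, so this normal form involves no loss of generality.

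Define $L\colon A^n \to A^m$ by $L(a_1,\dots,a_n) = \bigl(\sum_j r_{kj} a_j\bigr)_{k\le m}$. This map is additive because the module axioms hold in $A$. For continuity, fix $r\in R$. Since $R$ is discrete, the map $a\mapsto ra$ is the restriction of the continuous scalar multiplication $R\times A\to A$ to the slice $\{r\}\times A$, and is therefore continuous. Each coordinate projection $A^n\to A$ is continuous, and addition $A\times A\to A$ is continuous. Composing these maps shows that every coordinate function $\sum_j r_{kj}a_j$ of $L$ is continuous, and hence $L$ is continuous into the product $A^m$.

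Next, $A$ is Polish and so Hausdorff, and $A^m$ with the product topology is Hausdorff as well. Thus $\{0\}\subseteq A^m$ is closed, and $\gra(A)=L^{-1}(0)$ is a closed subset of $A^n$. It is a subgroup because it is the kernel of a homomorphism.

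There is no real obstacle here. The one point that needs care is justifying the continuity of $a\mapsto ra$ from condition (3) of Definition~\ref{def:polish-module}, which relies on $R$ carrying the discrete topology.
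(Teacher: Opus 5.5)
Your proposal is correct and is essentially the paper's proof: the paper likewise writes $\gra(A)=\ker(C_A)$ for the map $C_A\colon A^n\to A^m$, $\bfa\mapsto C\bfa^t$, which is continuous because addition and scalar multiplication are, and concludes that this preimage of the closed set $\{\bfzero\}$ in the Hausdorff space $A^m$ is a closed subgroup. One small remark: continuity of $a\mapsto ra$ does not actually depend on $R$ being discrete, since $a\mapsto(r,a)$ is continuous into $R\times A$ for any topology on $R$.
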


\begin{proof}
	Write $|\bfu|=n$. Since $\gra(\bfu)$ is a quantifier-free pp-formula,
	it is a finite conjunction of homogeneous linear equations. Thus there are
	$m<\omega$ and a matrix $C\in M_{m\times n}(R)$ such that
	\begin{equation}
		\gra(A)=\ker(C_A),
	\end{equation}
	where $C_A\colon A^n\to A^m$ is given by
	$C_A(\bfa)=C\bfa^t$. The map $C_A$ is continuous because addition and
	scalar multiplication are continuous on $A$. Since $A^m$ is Hausdorff,
	$\{\bfzero\}$ is closed, and hence
	$\gra(A)=C_A^{-1}(\{\bfzero\})$ is closed in $A^n$. It is a subgroup because the solution set of a homogeneous system contains zero and is closed under addition and additive inverses.
\end{proof}

Because every pp-formula is an existentially quantified system of linear equations, we obtain the following. \medskip

\begin{lemma}\label{pp_analytic} If $A$ is a Polish $R$-module and $\varphi(\bfu)$ a pp-formula, then $\varphi(A)$ is an analytic subgroup of $A^{|\bfu|}$. 

\end{lemma}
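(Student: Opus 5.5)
Write $\varphi(\bfu)=\exists\bfv\,\gra(\bfu,\bfv)$ as in Definition~\ref{def_pp_formula}, with $\gra$ quantifier-free, $|\bfu|=n$ and $|\bfv|=k$. Then $\varphi(A)$ is exactly the image of $\gra(A)\subseteq A^{n+k}$ under the coordinate projection $\pi\colon A^{n+k}\to A^n$, $(\bfa,\bfb)\mapsto\bfa$.

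First, I will check that $\gra(A)$ is Polish. By Lemma~\ref{pp_qf}, $\gra(A)$ is a closed subgroup of $A^{n+k}$. A finite power of a Polish space is Polish, and a closed subset of a Polish space is Polish in the subspace topology. So $\gra(A)$ is a Polish space.

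Next, I will check that $\pi$ is continuous: it is a coordinate projection of a product, and its restriction to $\gra(A)$ stays continuous. Hence $\varphi(A)=\pi(\gra(A))$ is a continuous image of a Polish space inside the Polish space $A^n$. By definition~\cite[14.1]{K1}, such a set is analytic. Equivalently, one can say it is the projection of a closed subset of $A^n\times A^k$.

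It remains to show that $\varphi(A)$ is a subgroup. The restriction $\pi\restriction\gra(A)$ is a group homomorphism, because $\gra(A)$ is a subgroup and $\pi$ is additive. So its image is a subgroup of $A^n$. Alternatively, this follows from the general fact recalled in the preliminaries that pp-formulae define subgroups.

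Nothing here is a real obstacle. The only point needing care is to work with the prenex form $\exists\bfv\,\gra$, so that Lemma~\ref{pp_qf} applies to the matrix $\gra$. This is legitimate because any formula we call pp is, relative to ${_R}T$, equivalent to one of this form, and equivalent formulae have the same solution set in $A$.
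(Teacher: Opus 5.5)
Your proof is correct and follows essentially the same argument as the paper. You write $\varphi$ as $\exists\bfv\,\gra$, use Lemma~\ref{pp_qf} to see that $\gra(A)$ is closed and hence Polish, and conclude that its image under the continuous coordinate projection is analytic, with the subgroup property coming from the fact that pp-formulae define subgroups (or, as you also note, from the projection being a homomorphism).
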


\begin{proof}
	Write $\varphi(\bfu)$ in the form
	\begin{equation}
		\exists \bfv\,\alpha(\bfu,\bfv),
	\end{equation}
	where $\alpha$ is a quantifier-free pp-formula. By
	Lemma~\ref{pp_qf}, $\alpha(A)$ is a closed subgroup of the relevant finite
	power of $A$, and is therefore a Polish space. The coordinate projection
	$(\bfu,\bfv)\mapsto\bfu$ is continuous and has image $\varphi(A)$.
	Consequently, $\varphi(A)$ is analytic, by the characterization of analytic sets as continuous images of Polish spaces, cf. e.g.~\cite[Exercise~14.3]{K1}. It is a subgroup because pp-formulae
	define subgroups in every module.
\end{proof}

\subsection{Polish topologies on pp-definable subgroups}

The next lemma lets us apply Polish-group arguments directly to pp-definable subgroups. It also strengthens Lemma~\ref{pp_analytic} by showing that these subgroups are Borel. We first recall the quotient theorem used in its proof.

\begin{fact}[{\cite[Theorem~1.17]{Melleray}}]\label{polish_group_quotient}
Let $G$ be a Polish group and let $K$ be a closed normal subgroup. Write $q_K\colon G\to G/K$ for the quotient homomorphism. Then $G/K$, endowed with the quotient topology
\[
 \tau_{G/K}:=\{W\subseteq G/K:q_K^{-1}(W)\text{ is open in }G\},
\]
is a Polish group.
\end{fact}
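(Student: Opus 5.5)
The plan is to verify separately that $G/K$ is a Hausdorff topological group, that it is separable and metrizable, and that it is completely metrizable. Completeness is the one step that needs an external theorem; the other two are formal.

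\smallskip\noindent\emph{Group structure and openness of $q_K$.} The first observation is that $q_K$ is an open map. If $V\subseteq G$ is open, then
\[
q_K^{-1}(q_K(V))=VK=\bigcup_{k\in K}Vk,
\]
which is open, so $q_K(V)$ is open by the definition of $\tau_{G/K}$. Openness lets us check continuity of multiplication and inversion on $G/K$ upstairs. Since $q_K\times q_K\colon G\times G\to G/K\times G/K$ is again an open continuous surjection, the map $(xK,yK)\mapsto xy^{-1}K$ is continuous: its composite with $q_K\times q_K$ equals $q_K\circ m$, where $m(x,y)=xy^{-1}$ is continuous on $G$. For Hausdorffness, the complement of the identity coset $K$ in $G/K$ is $q_K(G\setminus K)$, which is open because $K$ is closed and $q_K$ is open. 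So points are closed, and a $T_1$ topological group is Hausdorff; in fact it is regular.

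\smallskip\noindent\emph{Separability and metrizability.} $G/K$ is separable, since it is a continuous image of the separable space $G$. It is first countable, since the images under the open map $q_K$ of a countable neighbourhood base at $1_G$ form a countable neighbourhood base at $K$. The Birkhoff--Kakutani theorem then gives a compatible left-invariant metric on $G/K$. Concretely, if $d$ is a compatible left-invariant metric on $G$, one may take
\[
\bar d(xK,yK)=\inf_{k\in K}d(x,yk),
\]
and check directly that it is a metric, that it is left-invariant, and that it induces $\tau_{G/K}$.

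\smallskip\noindent\emph{Complete metrizability (the main obstacle).} This step is delicate because $\bar d$ need not be complete: left-invariant metrics on Polish groups are not complete in general. I would not try to lift Cauchy sequences through $\bar d$. Instead I would invoke the Hausdorff--Sierpi\'nski theorem: if $X$ is completely metrizable, $Y$ is metrizable, and $f\colon X\to Y$ is a continuous open surjection, then $Y$ is completely metrizable (see Kechris, Exercise~8.18 and its group-theoretic consequence~9.?). Applied to $f=q_K$, with $Y=G/K$ separable and metrizable by the previous step, this yields that $G/K$ is Polish.

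\smallskip\noindent\emph{Fallback for completeness.} If a self-contained argument is preferred, I would use the two-sided metric $D(x,y)=d(x,y)+d(x^{-1},y^{-1})$, which is complete on a Polish group. I would then show that every $\bar D$-Cauchy sequence of cosets can be lifted, after passing to a subsequence with $\bar D(x_nK,x_{n+1}K)<2^{-n}$, to a $D$-Cauchy sequence in $G$. The lifting is done by inductively choosing $k_{n+1}\in K$ that nearly attains the infimum, using normality of $K$ to move $k$ to either side. The limit of the lifted sequence then maps by $q_K$ to the limit of the coset sequence. The inductive choice of the elements $k_n$ is where the bookkeeping has to be done carefully.
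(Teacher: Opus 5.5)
Your main argument is correct and complete. The paper gives no proof of this statement: it imports it as a Fact from Melleray. So your proposal supplies an argument the paper omits rather than competing with one, and it is the standard route.

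\begin{itemize}
\item Openness of $q_K$ (since $q_K^{-1}(q_K(V))=VK$) makes $q_K\times q_K$ a quotient map. This gives continuity of $(xK,yK)\mapsto xy^{-1}K$, and together with closedness of $K$ it gives the $T_1$ property.
\item Separability and first countability pass to $G/K$, so Birkhoff--Kakutani gives metrizability.
\item Sierpi\'nski's theorem on continuous open surjections from a Polish space onto a separable metrizable space then makes $G/K$ Polish. In Kechris this theorem comes right after Choquet's characterization of Polish spaces via the strong Choquet game.
\end{itemize}

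Your explicit $\bar d(xK,yK)=\inf_{k\in K}d(x,yk)$ is also fine. Write $k_1'=xk_1x^{-1}$; left-invariance gives $d(xk_1,yk)=d(x,k_1'^{-1}yk)$, and normality gives $k_1'^{-1}yk\in yK$. So the infimum does not depend on representatives. The only repair in this part is the reference. Replace ``Exercise~8.18 \dots 9.?'' by a precise pointer to Sierpi\'nski's theorem. The separable version suffices, and your second step supplies the separable metrizability it needs.

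Your \emph{fallback}, by contrast, is not a proof as it stands; drop it or replace it.
\begin{itemize}
\item $D$ is in general not left-invariant, so the natural two-sided metric on $G/K$ is $\bar d(xK,yK)+\bar d(x^{-1}K,y^{-1}K)$.
\item A Cauchy bound for this metric gives one element $a\in x_{n+1}K$ with $y_n^{-1}a$ small. It gives a possibly different element $b\in x_{n+1}K$ with $by_n^{-1}$ small.
\item Normality lets you move an element of $K$ across $x_{n+1}$. It does not produce a single lift that is close to $y_n$ on both sides.
\item Indeed, $ay_n^{-1}=y_n(y_n^{-1}a)y_n^{-1}$ is a conjugate of a small element by $y_n$. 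That conjugation is not uniformly controlled along a sequence whose convergence is what you are trying to prove.
\end{itemize}
This is where the genuine difficulty lies, and it is exactly what Sierpi\'nski's (or Hausdorff's) theorem handles.

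If you want a self-contained replacement in the spirit of the paper, use Baire category.
\begin{itemize}
\item Embed $G/K$ densely in its two-sided (Raikov) completion $P$, which is a Polish group.
\item $G/K$ is analytic in $P$, as a continuous image of $G$.
\item $G/K$ is a Baire space, as an open continuous image of $G$. A dense Baire subspace is nonmeager in $P$.
\item Pettis' theorem then makes $G/K$ an open, hence closed, dense subgroup of $P$, so $G/K=P$.
\end{itemize}
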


If $h\colon G/K\to H$ is an isomorphism of abstract groups, \emph{transporting} $\tau_{G/K}$ to $H$ means declaring $V\subseteq H$ open exactly when $h^{-1}(V)$ is open in $\tau_{G/K}$. This equips $H$ with a Polish group topology for which $h$ is a homeomorphism. Here and below we work with additive abelian groups: a pp-definable subgroup need not be an $R$-submodule when $R$ is noncommutative. In particular, the topology placed on $H=\grf(A)$ in the next lemma is a Polish topology on its additive group. For a topology $\tau$ on $A$, we write $\tau^n$ for the product topology on $A^n$.

\begin{lemma}\label{pp_polish}
Let $(A,\tau)$ be a Polish $R$-module and let $\grf(\bfu)=\exists\bfv\,\gra(\bfu,\bfv)$ be a pp-formula, with $\gra$ quantifier-free and $|\bfu|=n$, $|\bfv|=k$. Put
\[
 G=\gra(A),\qquad
 K=\{(\bfzero,\bfb)\in G:\bfb\in A^k\},\qquad H=\grf(A),
\]
and give $G$ the subspace topology from $A^{n+k}$. The projection $\pi\colon G\to H$, $(\bfa,\bfb)\mapsto\bfa$, induces an isomorphism of abelian groups
\[
 \overline\pi\colon G/K\longrightarrow H,\qquad
 (\bfa,\bfb)+K\longmapsto\bfa.
\]
Transporting $\tau_{G/K}$ along $\overline\pi$ gives a Polish group topology $\tau_q$ on $H$. Equivalently, $V\subseteq H$ is $\tau_q$-open if and only if $\pi^{-1}(V)$ is open in $G$. Moreover:
\begin{enumerate}[(1)]
\item $\tau_q$ is finer than or equal to the subspace topology $\tau_{\mathrm{sub}}$ induced on $H$ by $(A^n,\tau^n)$;
\item for every $D\subseteq H$, the set $D$ is analytic in $(H,\tau_q)$ if and only if it is analytic as a subset of $(A^n,\tau^n)$;
\item $H$ is a Borel subset of $(A^n,\tau^n)$;
\item the inclusion $i\colon(H,\tau_q)\to(A^n,\tau^n)$, $i(\bfa)=\bfa$, is a continuous injection between Polish spaces.
\end{enumerate}

\end{lemma}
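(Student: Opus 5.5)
First I will check the setup. By Lemma~\ref{pp_qf}, $G=\gra(A)$ is a closed subgroup of the Polish group $A^{n+k}$, so $G$ is itself a Polish group. The set $K$ is the intersection of $G$ with the closed subgroup $\{\bfzero\}\times A^k$, so $K$ is closed. Since $G$ is abelian, $K$ is normal. The map $\pi$ is a homomorphism and is surjective onto $H$ by the definition of $\grf$. Its kernel is exactly $K$, so $\overline\pi$ is a group isomorphism. Fact~\ref{polish_group_quotient} makes $(G/K,\tau_{G/K})$ a Polish group, and transporting along $\overline\pi$ gives the Polish group topology $\tau_q$.

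For the equivalent description of $\tau_q$, note that $\pi=\overline\pi\circ q_K$. Hence $\pi^{-1}(V)=q_K^{-1}(\overline\pi^{-1}(V))$, and this matches the definition of the quotient topology.

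For (1), it suffices to show that $\pi\colon G\to(H,\tau_{\mathrm{sub}})$ is continuous. This holds because $\pi$ is the restriction of the coordinate projection $A^{n+k}\to A^n$. So if $V$ is $\tau_{\mathrm{sub}}$-open, then $\pi^{-1}(V)$ is open in $G$, and therefore $V\in\tau_q$.

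Statement (4) then follows immediately: by (1), the identity map $(H,\tau_q)\to(H,\tau_{\mathrm{sub}})$ is continuous, and composing with the inclusion into $A^n$ gives a continuous injection between Polish spaces.

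For (3), apply the Lusin--Souslin theorem (\cite[15.2]{K1}): a continuous injective image of a Borel subset of a Polish space is Borel. Applying it to $i$ and the Borel set $H$ of $(H,\tau_q)$ shows that $H$ is Borel in $A^n$. More generally, $i$ maps $\tau_q$-Borel sets to Borel sets. Combined with continuity, this shows that $\tau_q$ and $\tau_{\mathrm{sub}}$ have the same Borel sets.

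For (2), I will treat the two directions separately.
\begin{itemize}
\item Suppose $D$ is $\tau_q$-analytic. Then $D=i(D)$ is a continuous image of an analytic set, hence analytic in $A^n$.
\item Suppose $D\subseteq H$ is analytic in $A^n$. Then $D$ is an analytic subset of the standard Borel space $(H,\tau_{\mathrm{sub}}\text{-Borel})$, using that $H$ is Borel by (3). Since $i$ is a Borel isomorphism onto $H$, the set $i^{-1}(D)=D$ is analytic in the Polish space $(H,\tau_q)$. Here I use that analyticity is preserved under Borel images, in particular under Borel isomorphisms of standard Borel spaces.
\end{itemize}

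I expect this last point to need the most care. Analyticity depends only on the Borel structure, but this must be justified by citing that Borel images of analytic sets are analytic \cite[14.4]{K1}, applied to the Borel map $i^{-1}\colon H\to(H,\tau_q)$ defined on the Borel set $H\subseteq A^n$.
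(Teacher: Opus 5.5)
Your proof is correct and follows the paper's argument essentially step for step. The steps match: $G$ is a closed subgroup, $K$ is its closed kernel, Fact~\ref{polish_group_quotient} gives the Polish quotient topology, continuity of the coordinate projection yields (1) and (4), and Lusin--Souslin yields (3). The only divergence is the converse half of (2). There the paper simply notes that $D=i^{-1}(D)$ is a continuous preimage of an analytic set, and hence analytic by \cite[Proposition~14.4]{K1}. Your detour through (3) and the Borel isomorphism $i^{-1}$ is valid but unnecessary, so this step is actually the easiest one rather than the most delicate.
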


\begin{proof}
By Lemma~\ref{pp_qf}, $G$ is a closed subgroup of $A^{n+k}$ and is therefore Polish. Since $A^n$ is Hausdorff, $\{\bfzero\}$ is closed in $A^n$; thus $\{\bfzero\}\times A^k$ is closed in $A^{n+k}$, and $K=G\cap(\{\bfzero\}\times A^k)$ is closed in $G$. The group $G$ is abelian, so $K$ is normal. The map $\pi$ is a surjective homomorphism of abelian groups with kernel $K$. The first isomorphism theorem therefore gives the stated isomorphism $\overline\pi$, with $\pi=\overline\pi\circ q_K$.

\smallskip\noindent
By Fact~\ref{polish_group_quotient}, $(G/K,\tau_{G/K})$ is Polish. Its transported topology $\tau_q$ makes $\overline\pi$ a homeomorphism. For $V\subseteq H$, the definitions and $\pi=\overline\pi\circ q_K$ give
\[
 \begin{split}
 V\in\tau_q
 &\quad\Longleftrightarrow\quad \overline\pi^{-1}(V)\in\tau_{G/K}\\
 &\quad\Longleftrightarrow\quad q_K^{-1}\bigl(\overline\pi^{-1}(V)\bigr)=\pi^{-1}(V)
       \text{ is open in }G.
 \end{split}
\]

\smallskip\noindent
If $O\subseteq A^n$ is open, then $O\times A^k$ is open in $A^{n+k}$, so
$\pi^{-1}(H\cap O)=G\cap(O\times A^k)$ is open in the subspace $G$. The preceding equivalence shows that $H\cap O$ is $\tau_q$-open. Every $\tau_{\mathrm{sub}}$-open set has this form, proving $\tau_{\mathrm{sub}}\subseteq\tau_q$ in (1). Also $i^{-1}(O)=H\cap O$, so $i$ is continuous. It is injective, and its domain and codomain are Polish, proving~(4).

\smallskip\noindent
If $D$ is analytic in $(H,\tau_q)$, its continuous image $i(D)=D$ is analytic in $A^n$, by~\cite[Proposition~14.4]{K1}. Conversely, if $D\subseteq H$ is analytic in $A^n$, its continuous inverse image $i^{-1}(D)=D$ is analytic in the Polish space $(H,\tau_q)$, again by~\cite[Proposition~14.4]{K1}. This proves (2). Finally, the continuous injection in (4) has Borel image by the Lusin--Souslin theorem~\cite[Theorem~15.1]{K1}, proving (3).
\end{proof}

\begin{remark}
The topology $\tau_q$ on $H=\grf(A)$ need not equal the inherited topology, by which we mean the subspace topology $\tau_{\mathrm{sub}}$ induced from $A^n$. In the proofs below, all assertions about openness, meagerness, or the Baire property inside $\grf(A)$ refer to $\tau_q$. Lemma~\ref{pp_polish}(2) allows us to use subgroups originally known to be analytic in $A^n$. We are equipping an auxiliary abelian group with a topology; no change is made to the topology of the module $A$. We do not claim that $\tau_q$ is an $R$-module topology: $H$ need not even be an $R$-submodule.

\end{remark}

\subsection{The Polish unlimited theory and the bin}

The following definition is inspired by what is known as the unlimited part $T_u,$ defined in the introduction, of a complete theory $T$ of modules; see~\cite[\S~4.5]{prest_first_book}. It adapts that construction by replacing the finite-index cutoff with a countable-index cutoff.

Given a module $A \models {_R}T$, let us introduce a general definition of $\Th_{\Pun}(A)$, the \textbf{Polish unlimited theory} of $A$, which will have the property stated in the introduction when $A$ is a Polish $R$-module. 

\begin{equation}\label{def_polish_unlimited_theory}\Th_{\Pun} (A) \vdash [\varphi \; \colon \psi] = 
	\left\{ \begin{array}{ll} \infty & \mbox{if} \;\; [\grf (A) \; \colon \psi (A)] > \aleph_0 ;\\
		1 & \mbox{if} \;\; [\grf (A) \; \colon \psi (A)] \leq \aleph_0. \end{array} \right.
\end{equation}
Observe that if $A$ is countable, then $\Th_{\Pun} (A) \models [u \doteq u \; \colon u \doteq 0] = 1$, so that $\Th_{\Pun} (A) = \Th (0)$, the complete theory of the $0$ module.  To prove that $\Th_{\Pun}(A)$ is consistent, regardless of whether it is Polish or not, we need the important notion of a {\em bin.}

\begin{definition} \label{the bin}
	Let $R$ be countable and let $A$ be any $R$-module. A \textbf{bin} $B \preccurlyeq A$ is a countable elementary submodule with the property that for every pp-pair $\grf/\psi$ for which $|\grf (A)/\psi (A)| \leq \aleph_0$, the embedding $i \colon B \preccurlyeq A$ induces an isomorphism 
	\begin{equation}(\grf/\psi)(i) \colon (\grf/\psi)(B) \to (\grf/\psi)(A)\end{equation} of abelian groups. Recall that an elementary inclusion is pure: $\grf(B)=B^{|\bfu|}\cap\grf(A)$ for every pp-formula $\grf(\bfu)$. By~\eqref{pure_pp_pair_exact}, the induced map on each pp-pair is injective. To make it surjective when the index in $A$ is countable, choose representatives of all the relevant cosets and include their coordinates in $B$. This is possible because there are only countably many pp-pairs, and, for each such $\grf/\psi$, whose $A$-component is countable, there are only countably many cosets of $\psi (A)$ in $\grf (A)$. Now use the L\"{o}wenheim-Skolem Theorem to obtain a bin $B \preccurlyeq A$.

\end{definition}

\begin{remark}
	Informally, a bin is a countable elementary submodule containing representatives of every coset belonging to a countable pp-pair of $A$. This is exactly what makes the quotient satisfy $\Th_{\Pun}(A)$ below. The simplest example is $B=A$ when $A$ is countable. On the other hand, suppose that $A$ is a \textbf{Polish unlimited module,} i.e., that $\Th_{\Pun} (A) = \Th (A)$. Then a bin is just any countable elementary submodule.
\end{remark}

\begin{proposition}\label{bin_quotient}
If $B\preccurlyeq A$ is a bin, then $A/B\models\Th_{\Pun}(A)$. In particular, $\Th_{\Pun}(A)$ is consistent.
\end{proposition}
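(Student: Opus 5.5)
By Baur--Monk, it suffices to compute, for every pp-pair $\grf/\psi$, the index $[\grf(A/B)\colon\psi(A/B)]$ modulo $\infty$. The target values are: the index should be $1$ when $[\grf(A)\colon\psi(A)]\leq\aleph_0$, and infinite when $[\grf(A)\colon\psi(A)]>\aleph_0$. Once this is shown, the complete theory $\Th(A/B)$ has exactly the pp-invariants prescribed for $\Th_{\Pun}(A)$. Hence $A/B\models\Th_{\Pun}(A)$, and in particular this theory is consistent. The existence of a bin, which is needed for the consistency clause, was already established in Definition~\ref{the bin} via L\"owenheim--Skolem.

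The key tool is purity. Since $B\preccurlyeq A$ is an elementary, hence pure, embedding, the sequence $0\to B\to A\to A/B\to0$ is pure exact. For each pp-pair, sequence~\eqref{pure_pp_pair_exact} therefore gives a short exact sequence of abelian groups
\[
0\to(\grf/\psi)(B)\to(\grf/\psi)(A)\to(\grf/\psi)(A/B)\to 0 .
\]
Equivalently, Proposition~\ref{Ziegler Form 2} yields $[\grf(A)\colon\psi(A)]=[\grf(B)\colon\psi(B)]\cdot[\grf(A/B)\colon\psi(A/B)]$.

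We now split into two cases.

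\emph{Countable index.} Suppose $[\grf(A)\colon\psi(A)]\leq\aleph_0$. The bin property says the first map $(\grf/\psi)(B)\to(\grf/\psi)(A)$ is an isomorphism. Exactness then forces $(\grf/\psi)(A/B)=0$, so the index in $A/B$ is $1$, as required.

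\emph{Uncountable index.} Suppose $[\grf(A)\colon\psi(A)]>\aleph_0$. Since $B$ is countable, $(\grf/\psi)(B)$ is countable. The quotient $(\grf/\psi)(A/B)\cong(\grf/\psi)(A)/(\grf/\psi)(B)$ is then a quotient of an uncountable group by a countable subgroup. It is therefore uncountable, and in particular infinite. Note that only $B$ countable and pure in $A$ is used here; the full bin property is needed only for the countable case.

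I do not expect a serious obstacle. The only point needing care is the exactness on pp-pairs, which the paper has already recorded as the consequence~\eqref{pure_pp_pair_exact} of purity. One should also check that ``index $1$ or infinite'' for all pp-pairs matches the invariants defining $\Th_{\Pun}(A)$. This holds because the definition of $\Th_{\Pun}(A)$ only prescribes the values $1$ or $\infty$, and Baur--Monk determines a complete theory from exactly such invariants.
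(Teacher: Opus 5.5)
Your proposal is correct and follows essentially the same route as the paper's proof. Both use purity of $B\preccurlyeq A$ to get the short exact sequence $0\to F(B)\to F(A)\to F(A/B)\to 0$ for each pp-pair $F$. The bin property then kills $F(A/B)$ when $F(A)$ is countable, while countability of $B$ leaves $F(A/B)$ uncountable otherwise.
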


\begin{proof}
For every pp-pair $F=\grf/\psi$, purity gives, by~\eqref{pure_pp_pair_exact}, an exact sequence $0\to F(B)\to F(A)\to F(A/B)\to0$.
\noindent If $F(A)$ is countable, the bin condition makes $F(B)\to F(A)$ an isomorphism, so $F(A/B)=0$. If $F(A)$ is uncountable, its quotient by the countable group $F(B)$ is uncountable. These are exactly the pp-invariants prescribed by $\Th_{\Pun}(A)$.
\end{proof}

Recall that pp-formulae respect products and direct sums, so if $M$ is an $R$-module and $\grf (\bfu)$ a pp-formula, then $\grf (M^I) = (\grf (M))^I$ and 
$\grf (M^{(I)}) = (\grf (M))^{(I)}$, for any index set $I$.

\begin{proposition}\label{discrete_product}
Let $M$ be a countable $R$-module. Then $M^\omega$ and $M^{(\mathfrak{c})}$ are Polish unlimited and elementarily equivalent. Moreover, the product of the discrete topologies on $M$ makes $M^\omega$ a Polish $R$-module.
\end{proposition}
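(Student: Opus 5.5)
The plan is to compute pp-invariants in both modules directly from the identities $\grf(M^\omega)=\grf(M)^\omega$ and $\grf(M^{(\mathfrak c)})=\grf(M)^{(\mathfrak c)}$ noted just before the statement. These pass to pp-pairs, since $\psi(M)^\omega\leq\grf(M)^\omega$ taken coordinatewise gives
\[
(\grf/\psi)(M^\omega)\cong\bigl((\grf/\psi)(M)\bigr)^\omega,
\qquad
(\grf/\psi)(M^{(\mathfrak c)})\cong\bigl((\grf/\psi)(M)\bigr)^{(\mathfrak c)} .
\]
Fix a pp-pair $\grf/\psi$ and put $Q=(\grf/\psi)(M)$. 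This is a countable group because $M$ is countable. There are two cases.

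If $Q=0$, then both quotients above vanish, so the index is $1$ in $M^\omega$ and in $M^{(\mathfrak c)}$. If $Q\neq0$, then $Q^\omega$ and $Q^{(\mathfrak c)}$ both have cardinality exactly $\mathfrak c$. The lower bound comes from $2^\omega$ for the product and from $\mathfrak c$ many independent coordinates for the sum. The upper bound holds because $|Q|\leq\aleph_0$, so $|Q^\omega|\leq\aleph_0^{\aleph_0}=\mathfrak c$ and $|Q^{(\mathfrak c)}|\leq\mathfrak c$.

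Hence in both modules every pp-index is either $1$ or $\mathfrak c$. An index of $1$ is $\leq\aleph_0$ and is recorded as $1$ by $\Th_{\Pun}$. An index of $\mathfrak c$ is $>\aleph_0$, so both $\Th$ and $\Th_{\Pun}$ record it as $\infty$. So $\Th_{\Pun}$ and $\Th$ prescribe identical invariants, and by Baur--Monk each module is Polish unlimited. The two modules have the same invariants pair by pair, since each is $1$ or $\infty$ according as $Q=0$ or not. By Baur--Monk again, $M^\omega\equiv M^{(\mathfrak c)}$.

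For the topological claim, give $M$ the discrete topology; it is then Polish because it is countable. A countable product of Polish spaces is Polish, so $M^\omega$ with the product topology is separable and completely metrizable. Addition and negation are continuous coordinatewise, because they are continuous on each discrete factor, and the product topology is characterized by continuity of the coordinate maps. Scalar multiplication $R\times M^\omega\to M^\omega$ is continuous because each coordinate map $(r,x)\mapsto r\,x_n$ factors through the continuous map $R\times M\to M$ between discrete spaces composed with a projection.

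There is no real obstacle here. The only point needing care is the cardinality computation, in particular that a nonzero countable $Q$ gives exactly $\mathfrak c$ in both the product and the sum. That step is what makes every index land in $\{1,\mathfrak c\}$, which is needed both to identify $\Th$ with $\Th_{\Pun}$ and to match the two modules' invariants.
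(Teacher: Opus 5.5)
Your proposal is correct and follows essentially the paper's argument. The paper also computes each pp-pair via $F(M^\omega)\cong F(M)^\omega$ and $F(M^{(\mathfrak{c})})\cong F(M)^{(\mathfrak{c})}$, gets index $1$ or $\mathfrak{c}$ according as $F(M)=0$ or not, and concludes both $\Th=\Th_{\Pun}$ and $M^\omega\equiv M^{(\mathfrak{c})}$ by Baur--Monk; your check of the product topology just spells out what the paper states in one line.
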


\begin{proof}
For every pp-pair $F=\grf/\psi$, the product identity $F(M^{\gro})\cong F(M)^{\gro}$ shows that $F(M^{\gro})=0$ when $F(M)=0$, and that $|F(M^{\gro})|=\mathfrak{c}$ otherwise, since $F(M)$ is countable. Thus every pp-pair has index either $1$ or $\mathfrak{c}$ in $M^{\gro}$, which is exactly the assertion $\Th(M^{\gro})=\Th_{\Pun}(M^{\gro})$. The product of the discrete topologies on $M$ also makes $M^{\gro}$ a Polish $R$-module.

\smallskip \noindent
For the direct sum, $F(M^{(\mathfrak{c})})\cong F(M)^{(\mathfrak{c})}$ likewise has cardinality $1$ when $F(M)=0$, and $\mathfrak{c}$ otherwise. Hence $\Th(M^{(\mathfrak{c})})=\Th_{\Pun}(M^{(\mathfrak{c})})$. Moreover, for every pp-pair $F$, the groups $F(M^\omega)$ and $F(M^{(\mathfrak{c})})$ are both zero exactly when $F(M)=0$, and otherwise both are infinite. Thus all corresponding pp-indices agree modulo $\infty$, so Baur--Monk gives $\Th(M^{(\mathfrak{c})})=\Th(M^{\gro})$.

\end{proof}

The next theorem is the first index estimate needed for Theorem~\ref{Thm_CH}: consistency with $\Th_{\Pun}(A)$ prevents an ascending union of pp-definable subgroups from having countable index in its pp-definable upper bound.

\begin{theorem}\label{Thm_Second_qf}
	Let $R$ be a countable ring, ${_R}A$ an uncountable Polish module and consider an ascending chain of pp-definable subgroups
	\begin{equation}\psi_0 (A) \subseteq \psi_1 (A) \subseteq \cdots \subseteq \psi_m (A) \subseteq \cdots \subseteq \gra (A) \subseteq A^n\end{equation}
	that is bounded above by a pp-definable subgroup $\gra (A)$. If the pp-type 
	\begin{equation}q(\bfu) = \{ \gra (\bfu) \} \cup \{ \neg \psi_m (\bfu) \colon m < \omega \}\end{equation}
	is $\Th_{{\rm Pun}} (A)$-consistent, then the index $[\gra (A) \; \colon \Sigma_i \, \psi_i (A)] > \aleph_0$.

\end{theorem}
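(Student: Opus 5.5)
We argue by contradiction, using a Baire-category argument inside the Polish group $(\gra(A),\tau_q)$ supplied by Lemma~\ref{pp_polish}. The idea is that countable index of the union forces a single $\psi_m(A)$ to have countable index in $\gra(A)$, and this contradicts the $\Th_{\Pun}(A)$-consistency of $q$.

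Suppose, towards a contradiction, that $[\gra(A)\colon\Sigma_i\,\psi_i(A)]\leq\aleph_0$. Put $H=\gra(A)$ and give it the Polish group topology $\tau_q$ of Lemma~\ref{pp_polish}. Since the chain is ascending, the sum $\Sigma_i\,\psi_i(A)$ is simply the union $\bigcup_m\psi_m(A)$.

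\begin{enumerate}[(1)]
\item Choose countably many coset representatives $c_j\in H$ for $\bigcup_m\psi_m(A)$ in $H$. Then
\[
H=\bigcup_{j}\bigcup_{m}\bigl(c_j+\psi_m(A)\bigr)
\]
is a countable union.
\item By the Baire category theorem applied to the Polish space $(H,\tau_q)$, some translate $c_j+\psi_m(A)$ is nonmeager. Translation is a homeomorphism of the topological group $(H,\tau_q)$, so the subgroup $\psi_m(A)\subseteq H$ is itself nonmeager in $(H,\tau_q)$.
\item Each $\psi_m(A)$ is analytic in $A^n$ by Lemma~\ref{pp_analytic}, and it is contained in $H$. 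By Lemma~\ref{pp_polish}(2), it is therefore analytic in $(H,\tau_q)$. Analytic sets have the Baire property.
\item Pettis' theorem~\cite[9.9]{K1} applies to a nonmeager subgroup with the Baire property of a Polish group. It shows that $\psi_m(A)$ is open in $(H,\tau_q)$. Thus the quotient $H/\psi_m(A)$ is discrete in the quotient topology.
\item Since $(H,\tau_q)$ is separable, a discrete quotient of it is countable. Hence $[\gra(A)\colon\psi_m(A)]\leq\aleph_0$.
\end{enumerate}

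Here $\gra/\psi_m$ is a pp-pair, since $\psi_m(A)\subseteq\gra(A)$; if needed, replace $\psi_m$ by $\gra\wedge\psi_m$, which does not change $\psi_m(A)$. By the definition~\eqref{def_polish_unlimited_theory} of the Polish unlimited theory, the countable index gives
\[
\Th_{\Pun}(A)\vdash[\gra\colon\psi_m]=1,
\]
that is, $\Th_{\Pun}(A)\vdash\forall\bfu\,(\gra(\bfu)\to\psi_m(\bfu))$. But $q(\bfu)$ contains both $\gra(\bfu)$ and $\neg\psi_m(\bfu)$. So $q$ is $\Th_{\Pun}(A)$-inconsistent, contradicting the hypothesis. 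Hence $[\gra(A)\colon\Sigma_i\,\psi_i(A)]>\aleph_0$.

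The main obstacle is step~(4): Pettis' theorem must be applied in the correct topology. The subgroups $\psi_m(A)$ are only known to be analytic in $A^n$, and $H$ need not be closed in $A^n$. So we cannot argue in the subspace topology and must work in $\tau_q$. Lemma~\ref{pp_polish}(2) is exactly what transfers analyticity, and hence the Baire property, to $(H,\tau_q)$. Everything else is routine. The uncountability of $A$ is not even needed for this argument; it only matters for the finer continuum estimate in Theorem~\ref{Thm_CH}.
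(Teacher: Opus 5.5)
Your proposal is correct and follows the paper's proof essentially step for step. The steps are: assume countable index of the union, apply Baire category in $(\gra(A),\tau_q)$, transfer analyticity via Lemma~\ref{pp_polish}(2), use Pettis' theorem to make some $\psi_m(A)$ open and hence of countable index, and reach a contradiction with $\Th_{\Pun}(A)\vdash[\gra:\psi_m]=1$. Your side remark is also accurate: the argument never uses that $A$ is uncountable, since for countable $A$ the theory $\Th_{\Pun}(A)$ is that of the zero module and $q$ is automatically inconsistent.
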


\begin{proof}
Replacing $\psi_i$ by $\gra\wedge\psi_i$ leaves each $\psi_i(A)$ unchanged and does not change the type $q = q(\bfu)$, since $q$ already contains $\gra$. We may therefore assume that $\psi_i\leq\gra$ holds in every module. Give $H=\gra(A)$ the Polish group topology of Lemma~\ref{pp_polish}. Each $\psi_i(A)$ is analytic in $A^n$ by Lemma~\ref{pp_analytic}, and hence analytic in $H$ by Lemma~\ref{pp_polish}(2).

\smallskip\noindent
Put $D=\bigcup_{i<\omega}\psi_i(A)=\sum_{i<\omega}\psi_i(A)$; the union is a subgroup because the chain is increasing. Suppose, towards a contradiction, that $[H:D]\leq\aleph_0$. Choose a sequence $(c_j)_{j<\omega}$ of representatives for its cosets, repeating representatives if there are only finitely many. Then
\[
 H=\bigcup_{j<\omega}\bigcup_{i<\omega}(c_j+\psi_i(A)).
\]
Since $H$ is a nonempty Polish space, the Baire Category Theorem~\cite[8.4]{K1} implies that one of these translates is nonmeager. Translation is a homeomorphism, so $\psi_i(A)$ is nonmeager for some $i < \omega$. This subgroup $\psi_i(A)$ has the Baire property because it is analytic~\cite[Theorem~21.6]{K1}.
 Pettis' Theorem~\cite[Theorem~9.9]{K1} now implies that $\psi_i(A)-\psi_i(A)=\psi_i(A)$ contains an open neighborhood $V$ of zero. Since $\psi_i(A)=\bigcup_{a\in\psi_i(A)}(a+V)$, the subgroup is open, as in~\cite[Exercise~9.11]{K1}.
 Its cosets are disjoint nonempty open sets in a separable space, so $[\gra(A):\psi_i(A)]\leq\aleph_0$.

\smallskip\noindent
The definition of the Polish unlimited theory now gives $\Th_{\Pun}(A)\models[\gra:\psi_i]=1$. Thus $\gra(\bfu)\to\psi_i(\bfu)$ holds in every model of $\Th_{\Pun}(A)$. This contradicts the assumed consistency of $q$, which contains both $\gra$ and $\neg\psi_i$. \mbox{Thus, $[\gra(A):D]>\aleph_0$.}
\end{proof}

The next example exhibits an index of the form considered in Theorem~\ref{Thm_Second_qf} that occurs in O'Neill's algebraic criterion~\cite{ON84,oneill} for $R^{\gro}$ to be free. For a countable ring $R$, the natural product topology makes $R^{\gro}$ a Polish module, so the theorem applies to this setting. In Section~\ref{sec_applications}, we compute this index for countable rings and use it to compare freeness and projectivity of $R^\omega$.

\begin{example} \label{O'Neill}
		 Let $R$ be a ring, $f \in R$ an idempotent element and $J = J(R) \subseteq R$ the Jacobson radical. We have that
		 \begin{equation}|(fR^{\gro} + JR^{\gro})/JR^{\gro}| = |fR^{\gro}/(fR^{\gro} \cap JR^{\gro})|.\end{equation}
		 If $R$ is countable, then $A = R^{\gro}$ is a Polish module, and the pp-subgroup $fR^{\gro} = fA$ is defined by the quantifier-free formula $(1-f)u \doteq 0$. If we express the ideal $J = \Sigma_{i< \gro} \; J_i$ as an increasing union of finitely generated right ideals $J_i$, then each of the subgroups $fR^{\gro} \cap J_iR^{\gro} = fA \cap J_iA$ is defined by the pp-formula
		 \begin{equation}(1-f)u \doteq 0 \wedge \exists v_1, v_2, \ldots v_n (u \doteq r_1v_1 + \cdots + r_n v_n),\end{equation} where the $r_j$'s generate $(J_i)_R$.

	\end{example}

\subsection{The index dichotomy}

Recall that a topological space is \emph{perfect} if it has no isolated points. We will use the following form of Mycielski's theorem for perfect Polish spaces.

\begin{lemma}[{Mycielski; see \cite[Theorem~19.1]{K1} and \cite[p.~2719]{BZ13}}]\label{mycielski_lemma} Let $X$ be a non-empty perfect Polish space and let $R \subseteq X \times X$ be meager in $X \times X$. Then there is a Cantor set $C \subseteq X$ (i.e., a homeomorphic copy of the Cantor space $2^\omega$) such that, for all $x \neq y \in C$, we have that $(x, y) \notin R$.

\end{lemma}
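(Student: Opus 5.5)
The plan is the classical Cantor-scheme construction, run against a countable family of closed nowhere dense sets covering $R$. Since $R$ is meager in $X\times X$, I would first write $R\subseteq\bigcup_{n<\omega}F_n$ with each $F_n$ closed and nowhere dense in $X\times X$. Replacing $F_n$ by $F_0\cup\cdots\cup F_n$, I may assume the sequence is increasing. I would also fix a complete compatible metric $d$ on $X$.

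\smallskip\noindent
Next I would build, by recursion on $n$, nonempty open sets $U_s\subseteq X$ for $s\in 2^{<\omega}$ satisfying four conditions:
\begin{enumerate}[(i)]
\item $\overline{U_{s\frown i}}\subseteq U_s$ for $i=0,1$;
\item $\overline{U_{s\frown 0}}\cap\overline{U_{s\frown 1}}=\emptyset$;
\item $\mathrm{diam}(U_s)<2^{-|s|}$;
\item for distinct $s,t\in 2^{n+1}$, $(U_s\times U_t)\cap F_n=\emptyset$.
\end{enumerate}
Given the sets at level $n$, I would do the step in two stages.
\begin{itemize}
\item \emph{Splitting.} Since $X$ is perfect, the nonempty open set $U_s$ contains two distinct points. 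Small open balls around them give $U_{s\frown0},U_{s\frown1}$ satisfying (i)--(iii).
\item \emph{Shrinking.} Enumerate the finitely many ordered pairs $(s,t)$ of distinct nodes in $2^{n+1}$ and treat them one at a time. Since $F_n$ is closed nowhere dense, the nonempty open box $U_s\times U_t$ contains a nonempty basic open box $V\times W$ disjoint from $F_n$. I then replace $U_s$ by $V$ and $U_t$ by $W$.
\end{itemize}
Shrinking only passes to subsets, so it preserves (i)--(iii) and the disjointness already secured for earlier pairs. The order of treatment therefore does not matter.

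\smallskip\noindent
For $a\in 2^\omega$, the closures $\overline{U_{a\restriction n}}$ form a decreasing sequence of nonempty closed sets with diameters tending to $0$. By completeness their intersection is a single point $f(a)$, and by (i) we have $f(a)\in U_{a\restriction n}$ for every $n$. The map $f$ is continuous by (iii) and injective by (ii). Since $2^\omega$ is compact, $f$ is a homeomorphism onto its image $C$, so $C$ is a Cantor set.

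\smallskip\noindent
Now take $x=f(a)\neq y=f(b)$, so $a\neq b$. Then $a\restriction(n+1)\neq b\restriction(n+1)$ for all sufficiently large $n$. For such $n$, $(x,y)\in U_{a\restriction(n+1)}\times U_{b\restriction(n+1)}$, and by (iv) this gives $(x,y)\notin F_n$. Because the $F_n$ are increasing, $(x,y)\notin F_m$ for every $m$, hence $(x,y)\notin R$.

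\smallskip\noindent
The only delicate point is the shrinking stage. I expect it to be routine once the $F_n$ are made increasing: the $n$-th level has to handle only the single set $F_n$, and monotonicity transfers the avoidance to all of $R$.
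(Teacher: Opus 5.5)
Your proposal is correct. It is the standard Cantor-scheme proof of Mycielski's theorem, which is exactly the argument behind Kechris's Theorem~19.1; the paper cites that result (and \texttt{BZ13}) instead of proving the lemma itself. Your details are the right ones, in particular making the $F_n$ increasing and shrinking over ordered pairs, since $F_n$ need not be symmetric.
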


\begin{theorem}\label{analytic_dichotomy} Let $A$ be a Polish $R$-module, let $\varphi(\bfu)$ be a pp-formula with $|\bfu|=n$ and let $N\leq\varphi(A)\leq A^n$ be an analytic subgroup of $A^n$. Then the index $[\varphi(A) : N]$ is either countable or equal to $\mathfrak{c}$.
\end{theorem}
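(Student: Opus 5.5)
We argue inside the Polish group $H=\varphi(A)$, equipped with the topology $\tau_q$ of Lemma~\ref{pp_polish}. Since $N\subseteq H$ is analytic in $A^n$, Lemma~\ref{pp_polish}(2) makes it analytic in $(H,\tau_q)$. Analytic sets have the Baire property~\cite[Theorem~21.6]{K1}, so $N$ does too. We split into two cases according to whether $N$ is meager or nonmeager in $H$. The upper bound $[\varphi(A):N]\leq|H|\leq\mathfrak c$ is automatic, since $H$ is Polish. So it suffices to show that the index is either countable or at least $\mathfrak c$.

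\emph{Case 1: $N$ is nonmeager.} This is the argument from the proof of Theorem~\ref{Thm_Second_qf}. Pettis' Theorem~\cite[Theorem~9.9]{K1} gives an open neighbourhood $V$ of zero with $V\subseteq N-N=N$. Hence $N$ is open in $H$. Its cosets are then pairwise disjoint nonempty open sets in the separable space $H$, so $[H:N]\leq\aleph_0$.

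\emph{Case 2: $N$ is meager.} We produce a Cantor set of pairwise inequivalent elements, using Mycielski's Lemma~\ref{mycielski_lemma}. Let $E=\{(x,y)\in H\times H: x-y\in N\}$ be the coset equivalence relation. $E$ is the preimage of $N$ under the continuous map $(x,y)\mapsto x-y$, so $E$ is analytic and has the Baire property. To see that $E$ is meager, we apply Kuratowski--Ulam~\cite[Theorem~8.41]{K1}. For each $x$, the vertical section $E_x=x-N$ is a homeomorphic image of the meager set $N$, hence meager. Therefore $E$ is meager in $H\times H$.

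Next we check that $H$ is perfect, so that Lemma~\ref{mycielski_lemma} applies. If $H$ had an isolated point, then by homogeneity $\{0\}$ would be open, and $H$ would be discrete and countable. The meager set $N$ would then be empty. This is impossible, because $0\in N$. Hence $H$ is a nonempty perfect Polish space. Lemma~\ref{mycielski_lemma} now yields a Cantor set $C\subseteq H$ such that $(x,y)\notin E$ for all distinct $x,y\in C$. The elements of $C$ lie in distinct cosets of $N$, so $[H:N]\geq|C|=\mathfrak c$.

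The main technical point is Case~2: establishing the Baire property and meagerness of $E$ in the correct topology $\tau_q$, rather than the subspace topology. This is exactly why Lemma~\ref{pp_polish}(2) is needed to transfer analyticity. The perfectness check is the other place where care is required, and it is handled by the observation that a discrete Polish group is countable, so every nonempty subset of it, in particular $N\ni 0$, is nonmeager.
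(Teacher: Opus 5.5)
Your proposal is correct and follows the paper's proof essentially step for step: you use the $\tau_q$ topology together with Lemma~\ref{pp_polish}(2), Pettis' Theorem in the nonmeager case, and Kuratowski--Ulam followed by Mycielski in the meager case. The only cosmetic difference is where you verify that $H$ is perfect. You do it inside the meager case, since a discrete $H$ would force the meager set $N\ni 0$ to be empty. The paper instead first disposes of countable $H$ and then notes that an uncountable Polish group has no isolated points.
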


\begin{proof}
Put $H=\varphi(A)$ and give it the Polish group topology of Lemma~\ref{pp_polish}. The subgroup $N$ is analytic in this topology by part~(2) of that lemma. Since $H\subseteq A^n$, we have $[H:N]\leq\mathfrak{c}$. If $H$ is countable, the conclusion is immediate, so assume that $H$ is uncountable. It then has no isolated points: otherwise translations would make it discrete, whereas a dense subset of a discrete space is the whole space, so a separable discrete space is countable. Thus $H$ is a nonempty perfect Polish space.

\smallskip\noindent
Since $N$ is analytic in $H$, it has the Baire property by~\cite[Theorem~21.6]{K1}. If $N$ is nonmeager in $H$, Pettis' Theorem~\cite[Theorem~9.9]{K1} implies that $N-N=N$ contains an open neighborhood $V$ of zero. Then $N=\bigcup_{a\in N}(a+V)$ is open, as in~\cite[Exercise~9.11]{K1}. Its cosets are disjoint nonempty open sets, each meeting a fixed countable dense subset of $H$, so there are only countably many cosets.

\smallskip\noindent
Suppose instead that $N$ is meager in $H$. Define the coset equivalence relation
\[
 E=\{(x,y)\in H\times H:x-y\in N\}.
\]
For the continuous difference map $d\colon H\times H\to H$, $d(x,y)=x-y$, we have $E=d^{-1}(N)$. Closure of analytic sets under continuous inverse images~\cite[Proposition~14.4]{K1} shows that $E$ is analytic in the Polish space $H\times H$. Thus $E$ has the Baire property in $H \times H$, by~\cite[Theorem~21.6]{K1}. For every $x\in H$, its vertical section is
$E_x=\{y\in H:x-y\in N\}=x+N$, since $N=-N$. Translation by $x$ is a homeomorphism of $H$, so $E_x$ is meager in $H$. Apply the Kuratowski--Ulam theorem~\cite[Theorem~8.41]{K1} to the set $E\subseteq H\times H$ with the Baire property: since every vertical section is meager, in particular the sections are meager for comeager many $x$, and the theorem gives that $E$ is meager in $H\times H$. By Lemma~\ref{mycielski_lemma}, there is a Cantor set $C\subseteq H$ whose distinct elements are pairwise inequivalent under $E$. They represent distinct cosets of $N$, so $[H:N]\geq|C|=\mathfrak{c}$. Together with the upper bound, this gives $[\varphi(A):N]=\mathfrak{c}$.
\end{proof}

\begin{corollary}\label{Polish_invariants} Let $A$ be a Polish $R$-module and let $\grf/\psi$ be a pp-pair in a finite tuple $\bfu$ of free variables. Then the index $[\grf(A) : \psi(A)]$ is either countable or equal to $\mathfrak{c}$. In other words, we have that the following holds:
	\begin{equation}\Th_{\Pun} (A) \vdash [\grf \; \colon \psi] > 1 \;\; \mbox{if and only if} \;\; [\grf(A) : \psi(A)] = \mathfrak{c}.\end{equation}
\end{corollary}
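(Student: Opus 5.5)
This corollary is a direct specialization of Theorem~\ref{analytic_dichotomy}. I would take $N=\psi(A)$. Since $\grf/\psi$ is a pp-pair, $\psi(A)\leq\grf(A)\leq A^n$ with $n=|\bfu|$, and $\psi(A)$ is an analytic subgroup of $A^n$ by Lemma~\ref{pp_analytic}. If one prefers to allow $\psi$ not implying $\grf$, recall the convention that $\grf/\psi$ means $\grf/(\grf\wedge\psi)$; then $N=(\grf\wedge\psi)(A)$, which is again pp-definable and hence analytic. Theorem~\ref{analytic_dichotomy} then gives that $[\grf(A):\psi(A)]$ is either countable or equal to $\mathfrak c$. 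This is the first assertion.

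For the displayed equivalence, I would unwind the definition~\eqref{def_polish_unlimited_theory} of $\Th_{\Pun}(A)$. This theory assigns $[\grf:\psi]=\infty$ exactly when $[\grf(A):\psi(A)]>\aleph_0$, and assigns $[\grf:\psi]=1$ otherwise. Hence $\Th_{\Pun}(A)\vdash[\grf:\psi]>1$ if and only if the index is uncountable.

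One point needs care: $\Th_{\Pun}(A)$ must be a consistent complete theory, so that the value it assigns is not ambiguous. Consistency is Proposition~\ref{bin_quotient}. So the sentence $[\grf:\psi]>1$ holds in $\Th_{\Pun}(A)$ exactly in the uncountable case. By the dichotomy just established, an uncountable index equals $\mathfrak c$, which gives the equivalence.

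There is no real obstacle beyond checking that the hypotheses of Theorem~\ref{analytic_dichotomy} are met. The only substantive input, that pp-definable subgroups are analytic, is already provided by Lemma~\ref{pp_analytic}.
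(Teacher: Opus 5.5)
Your proof is correct and follows the paper's argument. As in the paper, you take $N=\psi(A)$, use Lemma~\ref{pp_analytic} to make it analytic, apply Theorem~\ref{analytic_dichotomy} for the dichotomy, and use the consistency of $\Th_{\Pun}(A)$ from Proposition~\ref{bin_quotient} to read the displayed equivalence directly off the definition of the Polish unlimited theory.
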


\begin{proof} By Lemma~\ref{pp_analytic}, $\psi(A)$ is an analytic subgroup of $A^{|\bfu|}$, and $\psi(A) \subseteq \grf(A)$, since $\vdash \psi(\bfu) \to \grf(\bfu)$; hence, Theorem~\ref{analytic_dichotomy} gives the asserted dichotomy. By definition, $\Th_{\Pun}(A)$ assigns index $1$ to every pp-pair whose index in $A$ is countable, and infinite index to every other pp-pair. Thus Proposition~\ref{bin_quotient} gives $\Th_{\Pun}(A)\vdash[\grf:\psi]>1$ exactly in the uncountable case, which the dichotomy identifies with index~$\mathfrak{c}$.
\end{proof}

	\begin{fact}[{\cite[14.4]{K1}}]\label{countable_union_analytic} Let $X$ be a Polish space and let $A_n \subseteq X$, for $n < \omega$, be analytic sets. Then $\bigcup_{n<\omega} A_n$ is analytic.
\end{fact}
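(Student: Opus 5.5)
The statement is Fact~\ref{countable_union_analytic}: a countable union of analytic subsets of a Polish space is analytic. I will prove it from the characterization of analytic sets as continuous images of Polish spaces, cf.~\cite[Exercise~14.3]{K1}. One could instead use the characterization as continuous images of the Baire space $\omega^\omega$, but the Polish-space version avoids extra bookkeeping.

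First dispose of the trivial case. Discard the empty $A_n$. If all of them are empty, the union is empty and hence analytic.

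For each remaining $n$, choose a Polish space $Y_n$ and a continuous surjection $f_n\colon Y_n\to A_n\subseteq X$. Form the topological disjoint sum $Y=\bigsqcup_n Y_n$, i.e.\ the set of pairs $(n,y)$ with $y\in Y_n$, in which each $\{n\}\times Y_n$ is clopen.

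Next check that $Y$ is Polish. Let $d_n$ be a complete compatible metric on $Y_n$ bounded by $1$. Define
\[
 d\bigl((n,y),(m,z)\bigr)=d_n(y,z)\ \text{if } n=m, \qquad d\bigl((n,y),(m,z)\bigr)=2\ \text{otherwise}.
\]
This is a complete metric on $Y$ compatible with the sum topology. Indeed, a Cauchy sequence is eventually contained in a single $Y_n$, where it converges. Separability holds because a countable union of countable dense sets is a countable dense set.

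Now define $f\colon Y\to X$ by $f(n,y)=f_n(y)$. It is continuous because it is continuous on each clopen piece. Its image is exactly $\bigcup_n A_n$, so the union is analytic.

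No step is a genuine obstacle. The only point needing care is the verification that the disjoint sum of countably many Polish spaces is Polish, which is routine with the bounded metrics above.
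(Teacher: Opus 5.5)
Your proof is correct: the disjoint sum of the witnessing Polish spaces, with the bounded metrics you describe, is Polish, and the induced map onto $\bigcup_n A_n$ is continuous. The paper gives no proof and simply cites \cite[14.4]{K1}, where the standard argument is this same disjoint-sum idea, usually phrased with the Baire space as witness space via $\omega\times\omega^{\omega}\cong\omega^{\omega}$, which spares the metric verification you carry out by hand.
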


\begin{corollary}\label{sum_dichotomy} Let $A$ be a Polish $R$-module, let $\varphi(\bfu)$ be a pp-formula and let $(\psi_n(\bfu) : n < \omega)$ be pp-formulae in the same finite tuple of free variables such that $\psi_n(A) \subseteq \varphi(A)$, for every $n < \omega$. Then the index $[\varphi(A) : \Sigma_{n < \omega} \, \psi_n(A)]$ is either countable or equal to $\mathfrak{c}$. In particular, the conclusion of Theorem~\ref{Thm_Second_qf} can be strengthened from ``the index $[\varphi (A) \colon \Sigma_i \, \psi_i (A)]$ is uncountable'' to ``the index $[\varphi (A) \colon \Sigma_i \, \psi_i (A)]$ has the cardinality of the continuum''.
\end{corollary}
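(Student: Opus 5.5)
The plan is to reduce Corollary~\ref{sum_dichotomy} to Theorem~\ref{analytic_dichotomy}. That theorem applies to any analytic subgroup $N$ with $N\leq\grf(A)\leq A^n$, so it suffices to show that
\[
N:=\Sigma_{n<\gro}\,\psi_n(A)
\]
is such a subgroup. There is one point of care. The family $(\psi_n)$ is not assumed to be increasing, so $N$ is the subgroup generated by the $\psi_n(A)$, not just their union. We therefore write $N$ as the union of finite partial sums,
\[
N=\bigcup_{k<\gro}\,(\psi_0+\cdots+\psi_k)(A).
\]
Each finite sum $\psi_0+\cdots+\psi_k$ is again a pp-formula, by the supremum construction on pp-formulae in the preliminaries, and it defines $\psi_0(A)+\cdots+\psi_k(A)$. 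The union is increasing, so it is a subgroup, and it is exactly the sum of all the $\psi_n(A)$.

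Next we check analyticity. By Lemma~\ref{pp_analytic}, each $(\psi_0+\cdots+\psi_k)(A)$ is an analytic subset of $A^n$. Fact~\ref{countable_union_analytic} then shows that the countable union $N$ is analytic in $A^n$. Since each $\psi_n(A)\subseteq\grf(A)$ and $\grf(A)$ is a subgroup, $N\leq\grf(A)$. Theorem~\ref{analytic_dichotomy} now gives that $[\grf(A):N]$ is either countable or equal to $\mathfrak{c}$.

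For the strengthening of Theorem~\ref{Thm_Second_qf}: under the hypotheses there, the chain is already increasing and bounded by $\gra(A)$, and that theorem yields $[\gra(A):\Sigma_i\,\psi_i(A)]>\aleph_0$. Applying the dichotomy just proved with $\grf=\gra$ excludes the countable case, so the index equals $\mathfrak{c}$. This strengthened statement is exactly Theorem~\ref{Thm_CH}.

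There is no real obstacle. The only subtle point is to avoid treating $\Sigma_n\,\psi_n(A)$ as a single pp-definable subgroup; in general it is not. It is only a countable union of pp-definable ones, which is why we need the closure of analytic sets under countable unions rather than Lemma~\ref{pp_analytic} alone.
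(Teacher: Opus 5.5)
Your proposal is correct and follows the paper's own proof. The paper likewise writes $\Sigma_i\,\psi_i(A)$ as the increasing union of the pp-definable partial sums $\sum_{i\leq n}\psi_i(A)$, uses Lemma~\ref{pp_analytic} and Fact~\ref{countable_union_analytic} to get analyticity, and then applies Theorem~\ref{analytic_dichotomy}; combining this with Theorem~\ref{Thm_Second_qf} gives the strengthened conclusion, i.e.\ Theorem~\ref{Thm_CH}.
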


\begin{proof} Consider the sum $\Sigma_i \, \psi_i (A)$ as the union of the ascending chain of pp-definable, and so analytic, subgroups $\psi'_n (A) = \sum_{i \leq n} \psi_i (A)$ to see, by Fact~\ref{countable_union_analytic}, that it is analytic. Then apply Theorem~\ref{analytic_dichotomy}.
\end{proof}

\section{The DCC criterion}

\subsection{Kaplansky decompositions} 

As mentioned in the introduction, a direct sum decomposition $K = \bigoplus_{i \in I} \; K_i$ will be called a \textbf{Kaplansky decomposition} if each of the factors $K_i$ is countably generated\footnote{Kaplansky's original result, that projective modules are direct sums of countably generated projectives, doesn't depend on the cardinality of the ring, so this definition is faithful to the spirit of his original result. For $R$ countable, it is the same as that in the introduction.}. As we are assuming that the ring $R$ is countable, this implies that the $K_i$ are countable.

 \medskip

\begin{proposition}[Sabbagh's Theorem; see \cite{Sabbagh}]\label{Sabbagh}
	An embedding $i \colon M \hookrightarrow N$ of modules is elementary if and only if it is pure and $M \equiv N$.
\end{proposition}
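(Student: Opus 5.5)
The statement to prove is Sabbagh's Theorem: an embedding $i\colon M\hookrightarrow N$ is elementary if and only if it is pure and $M\equiv N$.

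The forward direction is immediate. An elementary embedding preserves and reflects every formula with parameters from $M$. In particular, for each pp-formula $\grf(\bfu)$ and $\bfa\in M^{|\bfu|}$, we have $M\models\grf(\bfa)$ iff $N\models\grf(i\bfa)$. This says $\grf(M)=M^{|\bfu|}\cap\grf(N)$, which is purity. Taking sentences gives $M\equiv N$.

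For the converse I would use the Baur--Monk pp-elimination in its relative form. Every formula $\chi(\bfu)$ is equivalent, modulo a complete theory $T$, to a Boolean combination of pp-formulae in $\bfu$ and invariant sentences $[\grf:\psi]\geq n$. This is the standard consequence of Baur--Monk quoted in the preliminaries (\cite{Baur}, \cite[Corollary~2.15]{prest_first_book}). Since $M\equiv N$, both satisfy the same $T$, so a single such Boolean combination works for $\chi$ in both $M$ and $N$. The invariant sentences then have the same truth value in $M$ and $N$. Purity gives that each pp-formula $\grf(\bfu)$ holds of $\bfa$ in $M$ iff it holds of $i\bfa$ in $N$. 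Hence every Boolean combination, and therefore $\chi$ itself, is preserved and reflected by $i$, so $i$ is elementary.

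The only substantive point is the relative form of pp-elimination: equivalence modulo a complete theory, with invariant sentences as the parameter-free ingredients. I would cite it as part of the Baur--Monk theorem rather than reprove it.

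If a more self-contained route were wanted, I would instead use the Tarski--Vaught test over saturated elementary extensions. The fallback would be the pp-type characterization: in models of a common complete theory, two tuples with the same pp-type and the same pp-negations have the same complete type. Purity guarantees exactly this for $\bfa$ and $i\bfa$.
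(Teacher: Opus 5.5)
Your argument is correct. The paper gives no proof of this proposition and simply imports it from \cite{Sabbagh}, so there is no internal argument to match. Your derivation is the standard textbook route: the forward direction restricts elementarity to pp-formulae and to sentences, and the converse rewrites an arbitrary $\chi(\bfu)$, modulo the common complete theory of $M$ and $N$, as a Boolean combination of pp-formulae and invariant sentences. What it buys is that Sabbagh's theorem appears as a formal corollary of pp-elimination, with all the depth located there; the citation keeps the paper short but hides that dependency.

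One attribution should be tightened. The version of Baur--Monk recorded in the paper's preliminaries is only the sentence-level statement that the invariants $[\grf:\psi]$ modulo $\infty$ determine the complete theory. Your converse needs the formula-level pp-elimination: every formula in free variables $\bfu$ is equivalent modulo ${_R}T$ to a Boolean combination of pp-formulae in $\bfu$ and invariant sentences. That is Baur's theorem proper, and it implies the sentence version but not conversely. So cite it directly (e.g.\ \cite{Baur}) rather than as something quoted in the preliminaries. For the same reason, your pp-type fallback is not more self-contained, since ``equal pp-types give equal complete types in models of a complete theory'' is a restatement of pp-elimination.

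Identifying purity with $\grf(M)=M^{|\bfu|}\cap\grf(N)$ is harmless, and the paper makes the same identification in Definition~\ref{the bin}. For the converse you only use middle exactness of the sequence in Subsection~\ref{sec_purity}. In the forward direction, surjectivity onto $\grf(N/M)$ follows by applying the intersection condition to the auxiliary pp-formula $\exists\bfu,\bfv\,(A\bfu^t-B\bfv^t\doteq\bfw^t)$, where $\grf(\bfu)=\exists\bfv\,(A\bfu^t\doteq B\bfv^t)$.
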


We write $M\preccurlyeq N$ to mean that $M$ is an elementary substructure of $N$ in the language of modules. Recall that the notion of bin was introduced in Definition~\ref{the bin}.

\begin{lemma} \label{Kap_bin}
	Let $K$ be a module with a Kaplansky decomposition 
	$K = \bigoplus_{i \in I} \; K_i$. If $M \preccurlyeq K$ is countably generated, and $J \subseteq I$ is a countable subset such that $M \subseteq \bigoplus_{j \in J} \; K_j$, then $M \preccurlyeq \bigoplus_{j \in J} \; K_j \preccurlyeq \bigoplus_{i \in I} \; K_i = K$. Thus any countable direct summand of $K$ that contains a bin $B \preccurlyeq K$ is itself a bin, a \textbf{summand bin}.
\end{lemma}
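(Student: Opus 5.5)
The plan is to prove each link in the chain $M \preccurlyeq \bigoplus_{j\in J}K_j \preccurlyeq K$ separately, using Sabbagh's Theorem (Proposition~\ref{Sabbagh}) to reduce elementarity to purity plus elementary equivalence. Put $K_J=\bigoplus_{j\in J}K_j$. I will first show that $K_J \preccurlyeq K$. The inclusion is split, hence pure. Next I need $K_J\equiv K$, and here I use $M$. Since $M\preccurlyeq K$, the inclusion $M\subseteq K$ is pure. As $M\subseteq K_J\subseteq K$, purity of $M$ in $K$ gives purity of $M$ in $K_J$: for a pp-formula $\grf$ we have $\grf(M)\subseteq M^n\cap\grf(K_J)\subseteq M^n\cap \grf(K)=\grf(M)$.

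The key step is to compare pp-indices. I will use Proposition~\ref{Ziegler Form 2} on the pure exact sequence $0\to M\to K_J\to K_J/M\to 0$ together with the direct sum formula $F(K)=F(K_J)\oplus F(K_{I\setminus J})$ for each pp-pair $F=\grf/\psi$. The claim is that $F(M)$, $F(K_J)$ and $F(K)$ agree modulo $\infty$.

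If $F(K)=0$, then $F(K_J)=0$, since it is a direct summand of $F(K)$. If $F(K)$ is finite of order $k$, then $F(M)$ also has order $k$ because $M\equiv K$, and $F(M)$ injects into $F(K_J)$, which injects into $F(K)$. Hence $|F(K_J)|=k$. If $F(K)$ is infinite, then $F(M)$ is infinite, so $F(K_J)\supseteq F(M)$ is infinite.

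Baur--Monk then gives $K_J\equiv K\equiv M$. Combined with the two purities, Sabbagh yields $K_J\preccurlyeq K$ and $M\preccurlyeq K_J$. Note that $K_J$ is countable because $J$ is countable and each $K_j$ is countable; this is used only for the second statement.

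For the \emph{summand bin} claim, let $D$ be a countable direct summand of $K$ containing a bin $B$. I expect this to be the main obstacle, since the first part only handled subsums indexed by $J$ and an arbitrary summand need not have that form. The way around it is to repeat the argument above with $D$ in place of $K_J$. That argument only used that $K_J$ is a direct summand of $K$ containing an elementary submodule, so it shows $D\preccurlyeq K$.

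It remains to check the bin condition. Let $F$ be a pp-pair with $F(K)$ countable. The map $F(B)\to F(K)$ is an isomorphism, and it factors as $F(B)\to F(D)\to F(K)$. The second map is injective because $D$ is pure in $K$. Therefore $F(D)\to F(K)$ is surjective and hence an isomorphism. Since $D$ is countable, $D$ is a bin.
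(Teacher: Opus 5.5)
Your proposal is correct and follows the paper's proof essentially step for step. Both inclusions are pure, the injections $F(M)\hookrightarrow F(K_J)\hookrightarrow F(K)$ squeeze the middle index, and Baur--Monk plus Sabbagh make both inclusions elementary; for the bin assertion, the isomorphism $F(B)\to F(K)$ factors through the pure inclusion, which forces $F(D)\to F(K)$ to be onto. Two minor differences: you state the last step for an arbitrary countable direct summand $D$, which matches the lemma's wording slightly better than the paper's phrasing in terms of $K_J$, and your mention of Proposition~\ref{Ziegler Form 2} is unnecessary, since you only use the injections.
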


\begin{proof}
Put $K_J=\bigoplus_{j\in J}K_j$. Since $M$ is pure in $K$ and $K_J$ is a direct summand of $K$, both inclusions $M\subseteq K_J\subseteq K$ are pure. Thus, for every pp-pair $F$, they induce injections $F(M)\hookrightarrow F(K_J)\hookrightarrow F(K)$, by~\eqref{pure_pp_pair_exact}. As $M\preccurlyeq K$, the outer groups have the same finite cardinality or are both infinite; the intermediate group has the same finite cardinality in the first case and is infinite in the second. Baur--Monk therefore gives $M\equiv K_J\equiv K$, and Sabbagh's Theorem makes both pure inclusions elementary. Finally, if $M$ is a bin and $F(K)$ is countable, the composite $F(M)\to F(K)$ is onto, so $F(K_J)\to F(K)$ is onto as well. This proves the bin assertion.
\end{proof}

\subsection{The proof of the DCC criterion}
In this section we prove Theorem~\ref{Thm_DCC}. We first obtain uncountably many independent countable summands with a common theory. If this theory has a strict descending pp-chain, we choose elements witnessing strictness in these orthogonal summands and arrange that suitable infinite sums, together with solutions of the equations defining the pp-formulae, converge in $A$. The resulting elements violate the finite-support requirement of the given direct-sum decomposition, by a $\Delta$-system argument.

\begin{notation} Recalling Definition~\ref{def_pp_formula}, in the case the pp-formula $\varphi(x)$ is a formula in one free variable $x$ (in the language of $R$-modules), then the formula has the form:
	\begin{equation}\exists y_0, ..., y_{j_*-1} (\bigwedge_{i < i_*} r_{i} x = \sum \{s_{(i, j)} y_{j} : j < j_i\}),\end{equation}
	with $r_i, s_{(i, j)} \in R$. Notice that w.l.o.g. we can assume that for all $i < i_*$ we have that $j_i = j_*$, since we can let $j_* = \mrm{max}(\{j_i : i < i_*\})$ and let $s_{(i, j)} = 0$ when undefined.  This notation will be used throughout this section.
\end{notation}

	\begin{observation}\label{observation_prelim} Suppose that $(G, d)$ is a Polish metric abelian group whose metric $d$ is translation-invariant, $A \subseteq G^k$ is uncountable, $1 \leq k < \omega$ and $\zeta > 0$. Then for some $(a_{(1, \ell)} : \ell < k) = \mathbf{a}_1 \neq \mathbf{a}_2 = (a_{(2, \ell)} : \ell < k) \in A$ we have that $d(a_{(1, \ell)} - a_{(2, \ell}), 0_G) < \zeta$, for every $\ell < k$. 
\end{observation}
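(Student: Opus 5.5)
The plan is to reduce the statement to a pigeonhole argument in the separable space $G^k$. Since $(G,d)$ is Polish, it is separable, and so is $G^k$ with the max metric
\[
 d_k(\mathbf{x},\mathbf{y})=\max_{\ell<k} d(x_\ell,y_\ell),
\]
which induces the product topology. Because $d$ is translation-invariant, $d(a-b,0_G)=d(a,b)$ for all $a,b\in G$. The required conclusion is therefore equivalent to finding $\mathbf{a}_1\neq\mathbf{a}_2$ in $A$ with $d_k(\mathbf{a}_1,\mathbf{a}_2)<\zeta$.

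To find such a pair, fix a countable dense set $\{\mathbf{q}_m : m<\omega\}\subseteq G^k$ and consider the balls $B_m=\{\mathbf{x}\in G^k : d_k(\mathbf{x},\mathbf{q}_m)<\zeta/2\}$. By density, these countably many balls cover $G^k$, so they also cover $A$. Since $A$ is uncountable and is a countable union of the sets $A\cap B_m$, some $A\cap B_m$ is uncountable, and in particular contains two distinct points $\mathbf{a}_1\neq\mathbf{a}_2$. The triangle inequality gives $d_k(\mathbf{a}_1,\mathbf{a}_2)<\zeta$. Coordinatewise this says $d(a_{(1,\ell)},a_{(2,\ell)})<\zeta$ for every $\ell<k$, and translation invariance turns this into $d(a_{(1,\ell)}-a_{(2,\ell)},0_G)<\zeta$, as required.

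There is essentially no obstacle here. The only points needing a line of justification are that a finite power of a separable metric space is separable, and that translation invariance lets us rewrite the distance between $a$ and $b$ as the distance of $a-b$ from $0_G$. Completeness is not used, and two points suffice, although the argument actually yields uncountably many pairwise $\zeta$-close points, which may be useful later in the $\Delta$-system argument.
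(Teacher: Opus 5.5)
Your argument is correct. The paper states this observation without proof, and your reasoning supplies the standard justification it implicitly relies on: cover the separable space $G^k$ by countably many $\zeta/2$-balls in the max metric, use the pigeonhole principle to get two distinct points of $A$ in one ball, and convert $d(a,b)<\zeta$ into $d(a-b,0_G)<\zeta$ by translation invariance.
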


	\begin{theorem}\label{first_theorem} If (A) then (B), where:
	\begin{enumerate}[(A)]
	\item 
	\begin{enumerate}[(1)]
	\item $R$ is countable;
	\item $A$ is an $R$-module;
		\item $(A,+)$ is a Polish abelian group and $d$ is a compatible complete metric; moreover, $d$ is translation-invariant, meaning that $d(x+a,y+a)=d(x,y)$ for all $a,x,y\in A$;
	\item $A = \bigoplus \{A_\alpha : \alpha < \beta \}$ for $\omega_1 \leq \beta$ and, for every $\alpha < \beta$, $A_\alpha$ is nonzero and countable;
	\item if $r \in R$ and $(x_n : n < \omega)$ and $(rx_n : n < \omega)$ are Cauchy sequences (with respect to the metric $d$ from (3)) with limit $x$ and $y$, \mbox{respectively, then $rx = y$.}
	\end{enumerate}
	\item There are $B_1$, $B_2$, $S_1$, $S_2$ such that:
	\begin{enumerate}[(a)]
	\item $S_1, S_2$ form a partition of $\beta$;
	\item $B_\ell = \bigoplus \{A_\alpha : \alpha \in S_\ell\}$, for $\ell =1, 2$ (so $A =B_1 \oplus B_2$);
	\item $S_1$ is countable (hence $B_1$ is countable);
	\item $B_2$ is a $\Sigma$-algebraically compact $R$-module (hence, for every $\alpha \in S_2$, $A_\alpha$ is $\Sigma$-compact);
\item $B_1$ is a bin of $A$, and $B_2\models\Th_{\Pun}(A)$.
		\end{enumerate}
	\end{enumerate}
\end{theorem}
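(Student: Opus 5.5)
The plan has two parts: first arrange $S_1$ and the bin property in (e), then prove (d) by contradiction using convergent infinite sums against the finite-support condition of the decomposition.

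\emph{Step 1: choosing $S_1$.} By Definition~\ref{the bin}, fix a bin $B\preccurlyeq A$. Since $B$ is countable and each element of $A$ has finite support in $A=\bigoplus_{\alpha<\beta}A_\alpha$, there is a countable $S_1\subseteq\beta$ with $B\subseteq B_1:=\bigoplus_{\alpha\in S_1}A_\alpha$. Lemma~\ref{Kap_bin} then makes $B_1$ a summand bin of $A$. Put $S_2=\beta\setminus S_1$. Then $B_2\cong A/B_1$, and Proposition~\ref{bin_quotient} gives $B_2\models\Th_{\Pun}(A)$. This settles (a), (b), (c) and (e).

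\emph{Step 2: reduction for (d).} By Definition~\ref{gava_ex}, it suffices to derive a contradiction from a strictly descending chain of pp-pairs $\grf_0\geq\grf_1\geq\cdots$ with $\grf_k(B_2)\neq\grf_{k+1}(B_2)$ for all $k$. I would use one free variable, invoking the standard fact that DCC on pp-definable subgroups in one variable already implies $\Sigma$-algebraic compactness; otherwise I would run the same argument with tuples. Since $B_2\models\Th_{\Pun}(A)$, every such pair has uncountable index in $A$. The index of $\grf_k/\grf_{k+1}$ in $B_1$ is countable, so by the multiplicativity of Proposition~\ref{Ziegler Form 2} its index in $B_2$ is uncountable. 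Since $\grf_k(B_2)=\bigoplus_{\alpha\in S_2}\grf_k(A_\alpha)$ and each $A_\alpha$ is countable, there are uncountably many $\alpha\in S_2$ with $\grf_k(A_\alpha)\neq\grf_{k+1}(A_\alpha)$.

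\emph{Step 3: inductive choice of small witnesses.} By induction on $k$, I would choose distinct indices $\alpha_k,\alpha'_k\in S_2$ avoiding all previously used indices. For each of uncountably many fresh $\alpha$, pick $a^\alpha\in\grf_k(A_\alpha)\setminus\grf_{k+1}(A_\alpha)$ together with a witness tuple $\bfb^\alpha\in A_\alpha^{j_*}$ for the existential quantifiers of $\grf_k$. These tuples $(a^\alpha,\bfb^\alpha)$ form an uncountable subset of $A^{1+j_*}$. Observation~\ref{observation_prelim} yields $\alpha\neq\alpha'$ with every coordinate of $(a^\alpha-a^{\alpha'},\bfb^\alpha-\bfb^{\alpha'})$ of $d$-norm $<2^{-k}$. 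Set $a_k=a^\alpha-a^{\alpha'}$. Then $a_k$ satisfies $\grf_k$ via $\bfb_k=\bfb^\alpha-\bfb^{\alpha'}$, and its $A_{\alpha_k}$-component $a^{\alpha}$ is not in $\grf_{k+1}$. The $2^{-k}$ bounds, translation invariance and completeness make all tail sums $t_m=\sum_{k\geq m}a_k$ and the corresponding witness sums converge. Each partial sum satisfies the linear equations of $\grf_m$ (using $\grf_k\to\grf_m$ for $k\geq m$, so witnesses for $\grf_m$ can be taken as $R$-combinations of the $\bfb_k$ and $a_k$ with fixed coefficients). Hypothesis (A)(5), applied equation by equation, then shows that the limits satisfy these equations, so $t_m\in\grf_m(A)$.

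\emph{Step 4: contradiction and main obstacle.} Let $x=t_0$, which has finite support. For each $j$ choose $m>j$. The projection $p_j\colon A\to A_{\alpha_j}$ is an $R$-homomorphism, so it preserves pp-formulae. Applying it to $x=\sum_{k<m}a_k+t_m$ gives $p_j(x)=a^{\alpha_j}+p_j(t_m)$ with $p_j(t_m)\in\grf_m(A_{\alpha_j})\subseteq\grf_{j+1}(A_{\alpha_j})$. Hence $p_j(x)\notin\grf_{j+1}(A_{\alpha_j})$, and in particular $p_j(x)\neq0$ for every $j$. The $\alpha_j$ are pairwise distinct, so $x$ has infinite support, which is a contradiction; hence $B_2$ satisfies the DCC and is $\Sigma$-algebraically compact. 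I expect Step 3 to be the main obstacle: the witnesses must be arranged so that the pp-membership $t_m\in\grf_m(A)$ genuinely survives the limit, which requires controlling the existential witnesses simultaneously with the elements and applying (A)(5) carefully to each scalar appearing in the defining equations.
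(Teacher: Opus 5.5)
Your plan is correct in outline, and your endgame takes a genuinely different and simpler route than the paper's.

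\textbf{How the two routes differ.} The paper proceeds in three stages:
\begin{itemize}
\item it builds $\omega_1$ pairwise disjoint countable elementary summands $K_\gamma\models\Th_{\Pun}(A)$, so that the chain is strict in each;
\item it forms a whole family $(x_\eta)_{\eta\in 2^\omega}$ of limits and checks that $\eta\mapsto x_\eta$ is injective;
\item it gets the contradiction from a $\Delta$-system on the finite supports of uncountably many $x_\eta$, together with a pigeonhole on the level $\mathfrak{k}(\eta)$ of $x_\eta$ minus the heart.
\end{itemize}
You instead count directly. The quotient $\grf_k(B_2)/\grf_{k+1}(B_2)\cong\bigoplus_{\alpha\in S_2}\grf_k(A_\alpha)/\grf_{k+1}(A_\alpha)$ is uncountable with countable terms, so single summands $A_\alpha$ already witness strictness at every level. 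You then apply the coordinate projection $p_j\colon A\to A_{\alpha_j}$ to the algebraic identity $x=\sum_{k<m}a_k+t_m$.

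This is valid. $p_j$ is not continuous, but it need not be. The only topological input is $t_m\in\grf_m(A)$; after that, $p_j$ preserves pp-formulae as an $R$-homomorphism and gives $p_j(x)\notin\grf_{j+1}(A_{\alpha_j})$ for every $j$. The paper uses the same two ingredients, $(*_9)$ and the direct-sum step in $(*_{20})$, but never projects the limit itself. Your route therefore makes the $K_\gamma$, the $2^\omega$-family, the injectivity check, and all of $(*_{10})$--$(*_{24})$ unnecessary.

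\textbf{The step that fails as written.} In Step~3, bounds of $2^{-k}$ on $a_k$ and on the coordinates of $\bfb_k$ do not make the witness series for $t_m$ converge, nor do they let you invoke (A)(5).
\begin{itemize}
\item Your $\grf_m$-witnesses for $a_k$ are $M_{k,m}a_k+N_{k,m}\bfb_k$, with matrices obtained, e.g., from the free realization of $\grf_k$. These matrices change with $k$, so no estimate on $(a_k,\bfb_k)$ transfers to them.
\item Hypothesis (A)(5) can only be used once you already know that $r\cdot\sum_{k=m}^{n}a_k$ and $s\cdot\sum_{k=m}^{n}z_{k,m,j}$ are Cauchy in $n$.
\end{itemize}

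\textbf{How to repair it.} Either of the following works.
\begin{itemize}
\item At stage $k$, apply Observation~\ref{observation_prelim} to a larger finite tuple. It should consist of $a^\alpha$, $\bfb^\alpha$, all $M_{k,m}a^\alpha+N_{k,m}\bfb^\alpha$ for $m\leq k$, and the products of these and of $a^\alpha$ with the finitely many scalars occurring in the equations of $\grf_m$, $m\leq k$. All entries are $R$-linear in $(a^\alpha,\bfb^\alpha)$, so differences of images are images of differences.
\item Alternatively, normalize the chain as the paper does, so that the quantifier-free matrix of $\grf_{k+1}$ extends that of $\grf_k$. Then $\grf_m$-witnesses are initial segments of $\grf_k$-witnesses, and only the scalars $r_i,s_{(i,j)}$ with $i<i_k$ need controlling. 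This is exactly what $(*_2)$(b)--(e) does.
\end{itemize}
With either repair, (A)(5) applied equation by equation gives $t_m\in\grf_m(A)$, and your Step~4 finishes the proof.
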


Notice that a metric as in Theorem~\ref{first_theorem} may always be chosen on an abelian Polish group; see \cite[Exercise~2.2.9]{gao}. In the application to a Polish $R$-module, condition~(A)(5) follows from the continuity of scalar multiplication.

\begin{proof}
By Lemma~\ref{Kap_bin}, choose a countable set $S_1\subseteq\grb$ such that $B_1=\bigoplus_{\gra\in S_1}A_{\gra}$ is a bin of $A$. Put $S_2=\grb\setminus S_1$, $B_2=\bigoplus_{\gra\in S_2}A_{\gra}$ and $T=\Th_{\Pun}(A)$. Proposition~\ref{bin_quotient} gives $B_2\models T$. More precisely, for each pp-pair $F$, the identity $F(A)=F(B_1)\oplus F(B_2)$ and the bin condition show that $F(B_2)$ is zero when $F(A)$ is countable, and uncountable otherwise.

\smallskip \noindent
We construct pairwise disjoint countable sets $W_{\grg}\subseteq S_2$ by recursion on $\grg<\gro_1$. Suppose that the sets $W_{\grd}$ have been chosen for $\grd<\grg$. Let $S_{(2,\grg)}=\bigcup_{\grd<\grg}W_{\grd}$, with $S_{(2,0)}=\varnothing$, and let $C_{\grg}=\bigoplus_{\gra\in S_2\setminus S_{(2,\grg)}}A_{\gra}$ be the remaining direct summand. Since $\grg$ is countable, the part removed from $B_2$ is countable. For every pp-pair $F$, removing that part preserves the alternatives $F(B_2)=0$ and $F(B_2)$ uncountable, so $C_{\grg}\models T$. By L\"owenheim--Skolem and Lemma~\ref{Kap_bin}, choose a countable set $W_{\grg}\subseteq S_2\setminus S_{(2,\grg)}$ such that the module $K_{\grg}:=\bigoplus_{\gra\in W_{\grg}}A_{\gra}$ is an elementary submodule of $C_{\grg}$. In particular, $K_{\grg}\models T$.

\smallskip \noindent
Put $I_*:=\grb\setminus\bigl(S_1\cup\bigcup_{\grg<\gro_1}W_{\grg}\bigr)$. We have the disjoint union
\begin{equation}\label{partition}
\grb=S_1\cupdot\bigcup^{\bullet}_{\grg<\gro_1}W_{\grg}\cupdot I_*,
\end{equation}
and the corresponding coarser decomposition
\begin{equation*}
A=\bigoplus_{\gra<\grb}A_{\gra}=B_1\oplus\bigoplus_{\grg<\gro_1}K_{\grg}\oplus\bigoplus_{\grd\in I_*}A_{\grd}.
\end{equation*}
Here $B_1$ is a summand bin, every $K_{\grg}$ is a countable model of $T$, and $I_*$ indexes the unused original summands.

\smallskip \noindent
It suffices to prove that $T$ is totally transcendental, since $B_2\models T$. The important point is that $B_2\equiv K_{\grg}$ for every $\grg<\gro_1$. For any pp-pair $\grf/\psi$, strictness of $\psi(M)\subseteq\grf(M)$ is expressed by the first-order sentence $[\grf:\psi]>1$. Thus a strict descending pp-chain in $B_2$ would be strict in every $K_{\grg}$. Ruling out such a common chain therefore proves the DCC for the whole of $B_2$, including the summands indexed by $I_*$.

\smallskip \noindent
Suppose, towards a contradiction, that $T$ is not totally transcendental. We can then find a descending chain of pp-formulae $\grf_n (x)$, starting with $\grf_0(x)\equiv x=x$, such that for every summand $K_{\grg}$, $\grg < \gro_1$, the descending chain of pp-definable subgroups
$$K_{\grg} = \grf_0 (K_{\grg})\supset \grf_1 (K_{\grg}) \supset \cdots \supset \grf_n (K_{\grg}) \supset \cdots $$ is strict. Let us write
$\varphi_n (x) \, \dot{=} \, \exists \mathbf{y}_n \, \psi_n(x, \mathbf{y}_n)$, $\mathbf{y}_n = (y_0, ..., y_{j_n-1})$ and 
\begin{equation} \label{new_pp_form} \psi_n(x, \mathbf{y}_n) \, \dot{=}\, \bigwedge_{i < i_n} r_{i} x = \sum \{s_{(i, j)} y_{j} : j < j_i\},\end{equation}
where $i_n < i_{n+1}$ and $j_i \leq j_{i+1}$. Expressing the pp-formulae in this way ensures that 
\begin{equation} \label{pp-implication}
	{_R}T \vdash \grf_{k+1} (x) \to \grf_k (x),
\end{equation}
so that the chain is (properly) descending even for $A$,
$$A = \grf_0 (A)\supset \grf_1 (A) \supset \cdots \supset \grf_n (A) \supset \cdots .$$	

\bigskip \noindent Recall now that the metric $d$ from item (A)(3) of the statement of the theorem induces a norm on $A$ by defining, for $a \in A$, $\| a\| = d(a, 0_A)$.
	\begin{enumerate}[$(*_1)$] 
	\item For $n < \omega$ and $\grg < \gro_1$, we can find $x_{(\grg, n)}$, $y_{(\grg, n, j)}$, for $j < j_n$ such that the following happens:
	\begin{enumerate}[$(\cdot_1)$]
	\item $\mathbf{y}_{(\grg, n)} = (y_{(\grg, n, j)} : j < j_n)$, where $y_{(\grg, n, j)} \in K_\grg$; 
	\item $K_\grg \models \psi_n(x_{(\grg, n)}, \mathbf{y}_{(\grg, n)})$, 
	that is (recalling (\ref{new_pp_form})), we have 
		\begin{equation}
			r_ix_{(\grg,n)}=\sum_{j<j_i}s_{(i,j)}y_{(\grg,n,j)} \qquad (i<i_n);
		\end{equation}
	\item recall $\varphi_n \, \dot{=} \, \exists \mathbf{y}_n \psi_n(x, \mathbf{y}_n)$		
	\item $x_{(\grg, n)} \in \varphi_{n}(K_{\grg}) \setminus \varphi_{n+1}(K_{\grg})$.
	\end{enumerate}
\end{enumerate}
\begin{enumerate}[$(*_2)$] 
	\item Using Observation~\ref{observation_prelim}, by induction on $n < \omega$, we can choose $(\alpha_n, \beta_n)$ s.t.:
	\begin{enumerate}[(a)]
	\item $\alpha_n \neq \beta_n \in \gro_1 \setminus \{\alpha_\ell, \beta_\ell : \ell < n\}$;
	\item $\| x_{(\alpha_n, n)} - x_{(\beta_n, n)}\| < \frac{1}{5^n}$;
	\item for $i < i_n$ and $j < j_i$ we have that $\| y_{(\alpha_n, n, j)} - y_{(\beta_n, n, j)}\| < \frac{1}{5^n}$;
	\item for $i < i_n$, $\| r_i x_{(\alpha_n, n)} - r_i x_{(\beta_n, n)}\| < \frac{1}{5^n}$;
		\item for $i < i_n$ and $j < j_i$, $\| s_{(i, j)} y_{(\alpha_n, n, j)} - s_{(i, j)} y_{(\beta_n, n, j)}\| < \frac{1}{5^n}$.
	\end{enumerate}
\end{enumerate}
\begin{enumerate}[$(*_3)$] 
	\item For $\eta \in 2^\omega$, $n < \omega$, $i < i_n$ and $j < j_i$ let
	\begin{equation*}(a) \;\; x_{\eta \restriction n} = \sum \{x_{(\alpha_\ell, \ell)} - x_{(\beta_\ell, \ell)} : \ell < n \text{ and } \eta(\ell) = 1\},\end{equation*}
	\begin{equation*}(b) \;\; y_{(\eta \restriction n, j)} = \sum \{y_{(\alpha_\ell, \ell, j)} - y_{(\beta_\ell, \ell, j)} : \ell < n, \eta(\ell) = 1, \text{ and } j_\ell > j\}.\end{equation*}
\end{enumerate}
\begin{enumerate}[$(*_4)$] 
	\item If $\eta\in2^\omega$ and $m \leq n < \omega$, then
	\begin{equation*}\| x_{\eta \restriction n} - x_{\eta \restriction m}\| \leq \sum \{ \frac{1}{5^t} : t \in [m, n)\} < \frac{2}{2^m}.\end{equation*}
\end{enumerate}
\begin{enumerate}[$(*_5)$] 
	\item If $\eta\in2^\omega$, $m\leq n<\omega$, and $i<i_m$, then
	\begin{equation*}\| r_i x_{\eta \restriction n} - r_i x_{\eta \restriction m} \| \leq \sum \{ \frac{1}{5^t} : t \in [m, n)\} < \frac{2}{2^m}.\end{equation*}
\end{enumerate}
\begin{enumerate}[$(*_6)$] 
	\item If $\eta\in2^\omega$, $m\leq n<\omega$, $i<i_m$, and $j<j_i$, then
	\begin{equation*}\| y_{(\eta \restriction n, j)} - y_{(\eta \restriction m, j)}\| \leq \sum \{ \frac{1}{5^t} : t \in [m, n)\} < \frac{2}{2^m}.\end{equation*}
\end{enumerate}
\begin{enumerate}[$(*_7)$] 
	\item If $\eta\in2^\omega$, $m\leq n<\omega$, $i<i_m$, and $j<j_i$, then
	\begin{equation*}\| s_{(i, j)} y_{(\eta \restriction n, j)} - s_{(i, j)}y_{(\eta \restriction m, j)} \| \leq \sum \{ \frac{1}{5^t} : t \in [m, n)\} < \frac{2}{2^m}.\end{equation*}
\end{enumerate}
\begin{enumerate}[$(*_8)$] 
	\item For $\eta \in 2^\omega$ we have that
	\begin{enumerate}[(a)]
	\item 
	\begin{enumerate}[$(\cdot_1)$]
	\item $(x_{\eta \restriction n} : n < \omega)$ is a Cauchy sequence;
	\item let $x_\eta$ be its limit.
	\end{enumerate}
	\item if $m < \omega$, $i < i_m$, and $j < j_i$, then
	\begin{enumerate}[$(\cdot_1)$]
	\item $(y_{(\eta \restriction n, j)} : n \in [m,\omega))$ is a Cauchy sequence;
	\item let $y_{(\eta, j)}$ be its limit.
	\end{enumerate}
	\item if $m<\omega$ and $i<i_m$, then
	\begin{enumerate}[$(\cdot_1)$]
	\item $(r_i x_{\eta \restriction n} : n\in[m,\omega))$ is a Cauchy sequence;
	\item its limit is $r_i x_\eta$, recalling hypothesis (A)(5);
		\end{enumerate}
	\item if $m<\omega$, $i < i_m$, and $j < j_i$, then 
	\begin{enumerate}[$(\cdot_1)$]
	\item $(s_{(i, j)} y_{(\eta \restriction n, j)} : n \in [m, \omega))$ is a Cauchy sequence;
	\item its limit is $s_{(i, j)} y_{({\eta}, j)}$.
	\end{enumerate}
	\end{enumerate}
\end{enumerate}
\begin{enumerate}[$(*_9)$]
	\item If $\eta \in 2^\omega$ and $n < \omega$, then:
	\begin{enumerate}[$(\cdot_1)$]
	\item $x_\eta - x_{\eta \restriction n}$ is the limit of the Cauchy sequence $(x_{\eta \restriction k} - x_{\eta \restriction n} : k \in [n, \omega))$;
	\item $x_{\eta} - x_{\eta \restriction n} \in \varphi_n(A)$.
	\end{enumerate}
\end{enumerate}
Why $(*_{9})$? Concerning $(\cdot_1)$,  $(x_{\eta \restriction n+k} : k < \omega)$ is a Cauchy sequence with limit $x_\eta$, hence $(x_{\eta \restriction n+k} - x_{\eta \restriction n} : k < \omega)$ is a Cauchy sequence with limit $x_{\eta} - x_{\eta \restriction n}$.
\newline For $(\cdot_2)$, fix $i<i_n$. For every $k_*<\omega$, the definitions in $(*_3)$ and the equations in $(*_1)$ give
\begin{equation*}
r_i\bigl(x_{\eta\restriction(n+k_*)}-x_{\eta\restriction n}\bigr)=\sum_{j<j_i}s_{(i,j)}\bigl(y_{(\eta\restriction(n+k_*),j)}-y_{(\eta\restriction n,j)}\bigr).
\end{equation*}
By $(*_8)$ and hypothesis~(A)(5), passing to the limit as $k_*\to\infty$ yields
\begin{equation*}
r_i\bigl(x_\eta-x_{\eta\restriction n}\bigr)=\sum_{j<j_i}s_{(i,j)}\bigl(y_{(\eta,j)}-y_{(\eta\restriction n,j)}\bigr) \qquad (i<i_n).
\end{equation*}
For each coordinate $j<j_n$ which occurs in one of these equations, put $w_{(\eta,n,j)}:=y_{(\eta,j)}-y_{(\eta\restriction n,j)}$, and put $w_{(\eta,n,j)}:=0$ for every unused coordinate. The preceding identities say precisely that $A\models\psi_n(x_\eta-x_{\eta\restriction n},(w_{(\eta,n,j)}:j<j_n))$. Hence $x_\eta-x_{\eta\restriction n}\in\varphi_n(A)$, as required.
\begin{enumerate}[$(*_{10})$]
	\item For $\eta \in 2^\omega$, let $(z_{(\eta, \gamma)} : \gamma \in u_\eta)$ be such that:
	\begin{enumerate}[$(\cdot_1)$]
	\item $x_\eta = \sum \{z_{(\eta, \gamma)} : \gamma \in u_\eta\}$;
	\item $u_\eta \subseteq \beta$ is finite;
	\item $z_{(\eta, \gamma)} \in A_\gamma \setminus \{0\}$, for $\gamma < \beta$;
	\item let $(\gamma_{(\eta, \ell)} : \ell < \mathfrak{n}_\eta)$ list $u_\eta$ with no repetitions.
	\end{enumerate}
\end{enumerate}
Notice that the map $\eta\mapsto x_\eta$ is injective. Indeed, let $k$ be the first coordinate at which distinct $\eta,\nu\in2^\omega$ differ, and, after interchanging them if necessary, assume that $\eta(k)=0$ and $\nu(k)=1$. Put $d_k:=x_{(\alpha_k,k)}-x_{(\beta_k,k)}$. Then
\begin{equation*}
x_\nu-x_\eta=(x_\nu-x_{\nu\restriction(k+1)})+d_k-(x_\eta-x_{\eta\restriction(k+1)}).
\end{equation*}
The two tail terms belong to $\varphi_{k+1}(A)$ by $(*_9)(\cdot_2)$, whereas $d_k\in\varphi_k(A)\setminus\varphi_{k+1}(A)$ by $(*_1)$ and the direct-sum decomposition. It follows that $x_\nu-x_\eta\in\varphi_k(A)\setminus\varphi_{k+1}(A)$, and in particular $x_\nu\neq x_\eta$.

\begin{enumerate}[$(*_{11})$]
	\item There are $\mathfrak{m}_* \leq \mathfrak{n}_*$ and $S \subseteq 2^\omega$ of cardinality $\aleph_1$ such that:
	\begin{enumerate}[(a)]
	\item $(u_\eta : \eta \in S)$ is a $\Delta$-system with heart $u_*$;
	\item $|u_*| = \mathfrak{m}_*$ and $|u_\eta| = \mathfrak{n}_*$;
	\item $u'_\eta = u_\eta \setminus u_*$ is disjoint from $\{\alpha_\ell, \beta_\ell : \ell < \omega\}$.
	\end{enumerate}
\end{enumerate}
Why $(*_{11})$? As there are only $\aleph_0$-many possibilities for $|u_\eta|$, w.l.o.g. there is an uncountable $S \subseteq 2^\omega$ and $\mathfrak{n}_* < \omega$ such that $\eta \in S$ implies $|u_{\eta}| = \mathfrak{n}_*$. Then using the $\Delta$-system lemma, we immediately get clause (a), and recalling the previous sentence, this also gives clause (b). As the $(u_\eta \setminus u_* : \eta \in S)$ are pairwise disjoint, $S$ is uncountable and $\{\alpha_\ell, \beta_\ell : \ell < \omega\}$ is countable, w.l.o.g., clause (c) holds as well.

\begin{enumerate}[$(*_{12})$]
	\item W.l.o.g. for every $\eta \in S$, we have:
	\begin{enumerate}[$(\cdot_1)$]
	\item $\{ \gamma_{(\eta, \ell)}  :\ell < \mathfrak{m}_* \}$ is $u_*$;
	\item $\gamma_{(\eta, \ell)} = \gamma^*_\ell$, for $\ell < \mathfrak{m}_*$.
\end{enumerate}
\end{enumerate}
\begin{enumerate}[$(*_{13})$]
	\item W.l.o.g. $(z_{(\eta, \gamma^*_\ell)} : \ell < \mathfrak{m}_*)$ is constant, say equal to $(z^*_\ell : \ell < \mathfrak{m}_*)$, hence necessarily $\mathfrak{m}_* < \mathfrak{n}_*$.
\end{enumerate}
[Why w.l.o.g. $(z_{(\eta, \gamma^*_\ell)} : \ell < \mathfrak{m}_*)$ is constant? As each $A_\gamma$ is countable. Why do we have $\mathfrak{m}_* < \mathfrak{n}_*$? Otherwise every support is the common heart, and the thinning in $(*_{13})$ makes $(x_\eta : \eta\in S)$ constant, contradicting the injectivity proved above.]
\begin{enumerate}[$(*_{14})$]
	\item 
	\begin{enumerate}[$(\cdot_1)$]
	\item Let $z_* = \sum \{z^*_\ell : \ell < \mathfrak{m}_* \}$;
	\item for $\eta \in S$, let $x'_\eta = x_\eta - z_*$;
	\item $x'_\eta \neq 0$.
	\end{enumerate}
\end{enumerate}
[Why $(\cdot_3)$? Otherwise, recalling $(*_{10})(\cdot_3)$, we have that $\mathfrak{n}_* = \mathfrak{m}_*$.]
\begin{enumerate}[$(*_{15})$]
	\item For $\eta \in S$, let $\mathfrak{k}(\eta) \leq \omega$ be such that:
	\begin{enumerate}[$(\cdot_1)$]
	\item $\mathfrak{k}(\eta) = \omega$ iff $x'_\eta \in \bigcap_{n < \omega} \varphi_{n}(A)$;
	\item $\mathfrak{k}(\eta) = n < \omega$ iff there is $n$ (in which case $n$ is unique) such that 
	\begin{equation}x'_\eta \in \varphi_{n}(A) \setminus \varphi_{n+1}(A).\end{equation}
	\end{enumerate}
\end{enumerate}
\begin{enumerate}[$(*_{16})$]
	\item Notice that  $\mathfrak{k}(\eta)$ is well-defined because by assumption $\varphi_0(A) = A$.
\end{enumerate}
\begin{enumerate}[$(*_{17})$]
	\item W.l.o.g., for some $\mathfrak{k}_* \leq \omega$ we have that $\eta \in S$ implies $\mathfrak{k}(\eta) = \mathfrak{k}_*$.
\end{enumerate}
[Why? $S$ is uncountable,  $\mathfrak{k}(\eta)$ has $|\omega+1| =\aleph_0$ many possible values, hence at least one value is chosen $\geq \aleph_1$-many times. Thus, we are fine.]
\begin{enumerate}[$(*_{18})$]
	\item Choose $\eta \neq \nu \in S$ and $k < \omega$ such that
	\begin{enumerate}[$(\cdot_1)$]
	\item $\eta \restriction k = \nu \restriction k$, $\eta(k) = 0$, $\nu(k) = 1$;
	\item $\mathfrak{k}_* < \omega$ implies $k > \mathfrak{k}_*$;
\end{enumerate}	
\end{enumerate}
\begin{enumerate}[$(*_{19})$]
	\item Put $d_k:=x_{(\alpha_k,k)}-x_{(\beta_k,k)}$. Since $\eta\restriction k=\nu\restriction k$, $\eta(k)=0$, and $\nu(k)=1$, we have $x_{\nu\restriction(k+1)}=x_{\eta\restriction(k+1)}+d_k$, and hence
\begin{equation*}
x'_\nu-x'_\eta=x_\nu-x_\eta=(x_\nu-x_{\nu\restriction(k+1)})+d_k-(x_\eta-x_{\eta\restriction(k+1)}).
\end{equation*}
\end{enumerate}
 Now we have 
\begin{enumerate}[$(*_{20})$]
	\item 
	\begin{enumerate}[$(\cdot_1)$]
	\item $x_{(\alpha_k, k)} - x_{(\beta_k, k)} \in \varphi_{k}(A) \setminus \varphi_{k+1}(A)$;
	\item $(x_\nu - x_{\nu \restriction (k+1)}) \in \varphi_{k+1}(A)$ (by $(*_{9})(\cdot_2)$);
	\item $(x_\eta - x_{\eta \restriction (k+1)}) \in \varphi_{k+1}(A)$ (by $(*_{9})(\cdot_2)$).
\end{enumerate}	
\end{enumerate}
[Why $(*_{20})$? We give details only on $(\cdot_1)$. The fact that $x_{(\alpha_k, k)} - x_{(\beta_k, k)} \in \varphi_k(A)$ is immediate. The fact that $x_{(\alpha_k, k)} - x_{(\beta_k, k)} \notin \varphi_{k+1}(A)$ follows from the fact that by assumption $x_{(\alpha_k, k)} \in K_{\alpha_k}$, $x_{(\beta_k, k)} \in K_{\beta_k}$ and $K_{\alpha_k} + K_{\beta_k} = K_{\alpha_k} \oplus K_{\beta_k}$.]

\smallskip \noindent
Now, as $\varphi_{k+1} \vdash \varphi_{k}$ (\ref{pp-implication}), by $(*_{19})$ and $(*_{20})$, we have that:
\begin{enumerate}[$(*_{21})$]
	\item $x'_\nu - x'_\eta \in \varphi_{k}(A) \setminus \varphi_{k+1}(A)$.
\end{enumerate}
Notice that it follows from $(*_{21})$ that $\mathfrak{k}_* < \omega$; in fact if not then, by $(*_{17})$, $\omega = \mathfrak{k}_* = \mathfrak{k}(\eta) = \mathfrak{k}(\nu)$ and so $x'_\eta, x'_\nu \in \bigcap_{n < \omega} \varphi_n(A)$, which implies that $x'_\eta - x'_\nu \in \bigcap_{n < \omega} \varphi_n(A)$, contradicting $(*_{21})$ (since obviously $k+1 < \omega$).
\begin{enumerate}[$(*_{22})$]
	\item We have that
\begin{equation}x'_\eta = \sum \{z_{(\eta, \gamma)} : \gamma \in u_\eta \setminus u_*\} \text{ and } x'_\nu = \sum \{z_{(\nu, \gamma)} :\gamma \in u_\nu \setminus u_* \}.\end{equation}
\end{enumerate}
[Why? By $(*_{10})(\cdot_1)$, $(*_{13})$ and $(*_{14})(\cdot_2)$.]
\begin{enumerate}[$(*_{23})$]
	\item $x'_\nu, x'_\eta \in \varphi_{\mathfrak{k}_*}(A) \setminus \varphi_{\mathfrak{k}_*+1}(A)$.
\end{enumerate}
[Why? By $(*_{15})(\cdot_2)$ and $(*_{17})$.]
\newline By $(*_{22})$ and $(*_{23})$ recalling that $(u_\eta \setminus u_*) \cap (u_\nu \setminus u_*) = \emptyset$ (see $(*_{11})$(a)) we have
\begin{enumerate}[$(*_{24})$]
\item $x'_\nu - x'_\eta \in \varphi_{\mathfrak{k}_*}(A) \setminus \varphi_{\mathfrak{k}_*+1}(A)$.
\end{enumerate}
Recall that $\mathfrak{k}_* < k$ (cf. $(*_{18})(\cdot_2)$); thus $(*_{21})$ and $(*_{24})$ yield a contradiction.
\end{proof}

\section{The main theorem}\label{sec_main_theorem}

Let us combine the results of the previous two sections to both characterize when an uncountable Polish module has a Kaplansky decomposition as well as to determine the isomorphism type of the unlimited complement of a summand bin. Recall from Subsection~\ref{sec_ziegler} that $\Cl(U)=\Cl(\Th(U))$ is the closed subset of $\Zg(R)$ associated with $U$, and that $M_{\cC}=\bigoplus_{V\in\cC}V$ denotes the direct sum of one representative of each point of a closed set $\cC$.

\begin{theorem}[Main Theorem]\label{main_th_sec} 
	Let $A$ be an $R$-module with a Kaplansky decomposition \begin{equation}A = \bigoplus_{i \in I} A_i,\end{equation} where $A_i$ is a countable $R$-module, for all $i \in I$. Then $A$ admits a Polish $R$-module structure if and only if there is a partition $I = I_1 \cupdot I_2$ and a totally transcendental closed set $\cC \subseteq \Zg (R)$ such that $A = B \oplus U$, where
	\begin{enumerate}[(1)]
		\item $B = \bigoplus_{i \in I_1} A_i$ is a bin of $A$ (so that $I_1$ is necessarily countable) and 
		\item $U = \bigoplus_{i \in I_2} A_i \cong (M_{\cC})^{(\mathfrak{c})} \isom (M_{\cC})^{\gro}$ (so that $U$ is necessarily $\Sigma$-algebraically compact with $\Cl (U) = \cC$).
	\end{enumerate}

\end{theorem}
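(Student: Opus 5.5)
The backward implication is the easy half, and I would do it first. Suppose $A=B\oplus U$ with $B$ countable and $U\cong(M_{\cC})^{\gro}$, where $M_{\cC}$ is countable by Lemma~\ref{tt_closed_set}. Give $B$ the discrete topology. Give $(M_{\cC})^{\gro}$ the product of the discrete topologies, which makes it a Polish module by Proposition~\ref{discrete_product}. Then transport the product topology along the isomorphism to $B\oplus U$. A countable discrete module times a Polish module is Polish, so this gives a (non-Archimedean) Polish $R$-module structure on $A$.

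For the forward implication, let $A$ be Polish; if $A$ is countable, take $I_1=I$, $B=A$ and $\cC=\emptyset$. Otherwise, fix a translation-invariant compatible complete metric. Hypothesis (A)(5) of Theorem~\ref{first_theorem} follows from continuity of scalar multiplication, and the index set is uncountable. Theorem~\ref{first_theorem} then gives a countable $I_1$ with $B=\bigoplus_{I_1}A_i$ a bin, and $U=\bigoplus_{I_2}A_i$ $\Sigma$-algebraically compact with $U\models T:=\Th_{\Pun}(A)$. Put $\cC:=\Cl(T)$. Since $T$ is totally transcendental, $\cC$ is totally transcendental. Moreover $|U|=\mathfrak c$, since $A\subseteq$ a Polish space and $U$ is uncountable.

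Next I would decompose $U$. By Garavaglia, $U=\bigoplus_j U_j^{(\lambda_j)}$ with indecomposable summands having local endomorphism rings. Each $U_j$ is a pure-injective indecomposable in $\Cl(U)=\cC$, hence countable by Lemma~\ref{tt_closed_set}, so $\sum\lambda_j=\mathfrak c$. Conversely, every point $V\in\cC$ occurs among the $U_j$. I would argue this via Fact~\ref{Ziegler_topology}: $U\equiv(M_{\cC})^{\gro}$ because both have all invariants $1$ or $\infty$ and the same closed set. For a $\Sigma$-algebraically compact module, every point of its closed set is a direct summand, since the pp-type of $0\ne a\in V$ is generated by one formula $\grf$ modulo $T$. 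Realizing $\grf$ in $U$ and applying Fact~\ref{pointed_hull_fact} produces a summand isomorphic to $V$. So the set of isomorphism types of the $U_j$ is exactly $\cC$.

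The main obstacle is the multiplicities: I must show $\lambda_j=\mathfrak c$ for every $j$. For this I plan to use the pp-pair isolating $U_j$. Since $\cC$ is totally transcendental, take $\grf$ generating the type of some $0\ne a\in U_j$, and take $\psi$ the sum of the finitely many (by DCC/minimality) pp-formulae properly below $\grf$ in $T$-equivalence. This makes $(\grf/\psi)(V)\neq0$ only for $V\cong U_j$, possibly after intersecting with further formulae; if finitely many choices fail, I would appeal to the isolation given by Ziegler's rank in a totally transcendental closed set. Then $(\grf/\psi)(U)\cong(\grf/\psi)(U_j)^{(\lambda_j)}$. Since $T\vdash[\grf:\psi]=\infty$, Corollary~\ref{Polish_invariants} applied to $A$ gives $[\grf(A):\psi(A)]=\mathfrak c$. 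Because $B$ is countable, this forces $|(\grf/\psi)(U)|=\mathfrak c$, hence $\lambda_j=\mathfrak c$, as $(\grf/\psi)(U_j)$ is countable. If the single-point isolation proves awkward, Theorem~\ref{Thm_CH} is the fallback: use an ascending chain of pp-formulae omitted by the type of $a$, whose index is then $\mathfrak c$.

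With all $\lambda_j=\mathfrak c$ and $\cC$ countable, $U\cong(M_{\cC})^{(\mathfrak c)}$. Finally, $(M_{\cC})^{\gro}$ is also $\Sigma$-algebraically compact, has the same closed set, and is Polish unlimited of cardinality $\mathfrak c$. The same argument applied to it as a Polish module (with $B=0$) gives $(M_{\cC})^{\gro}\cong(M_{\cC})^{(\mathfrak c)}$.
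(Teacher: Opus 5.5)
Your backward direction and your use of Theorem~\ref{first_theorem} to obtain the bin $B$ and the $\Sigma$-algebraically compact complement $U\models\Th_{\Pun}(A)$ agree with the paper. The forward direction, however, has a genuine gap at the step where you show that every point of $\cC$ occurs among the $U_j$. The general principle you invoke there, that every point of $\Cl(U)$ is a direct summand of a $\Sigma$-algebraically compact $U$, is false. Take $R=\mathbb{Z}$ and $U=\mathbb{Z}(p^\infty)^{(\mathfrak c)}$. Every direct sum of copies of $U$ is divisible, so $U$ is $\Sigma$-algebraically compact, and all of its pp-indices are $1$ or $\mathfrak c$. Moreover $\mathbb{Q}\in\Cl(U)$, because $\mathbb{Q}$ is a direct summand of the divisible group $\mathbb{Z}(p^\infty)^{\omega}$, and this group is elementarily equivalent to $U$ by Proposition~\ref{discrete_product}. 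Yet $\mathbb{Q}$ is not a summand of the torsion group $U$.

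The flaw is this: realizing the generator $\varphi$ of $\tp^+(V,a)$ by some $b\in U$ only gives $\tp^+(U,b)\supseteq\tp^+(V,a)$. Fact~\ref{pointed_hull_fact} needs equality, so $b$ must also avoid the countably many proper subgroups $(\varphi\wedge\chi)(U)$ with $\chi\notin\tp^+(V,a)$, and $\varphi(U)$ may be their union; here $U=\bigcup_k U[p^k]$. This is exactly where the Polish hypothesis has to enter. The Baire argument of Theorem~\ref{Thm_Second_qf}, together with countability of $B$, is what forces every point of $\cC$ to occur in $U$. The same argument is also why $\mathbb{Z}(p^\infty)^{(\mathfrak c)}$ carries no Polish group topology.

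Your multiplicity step has a related problem. A single pp-pair $\varphi/\psi$ that is nonzero on $V\in\cC$ only for $V\cong U_j$ need not exist, because points of a totally transcendental closed set need not be isolated in it. For example, $\mathbb{Q}$ lies in the closure of $\mathbb{Z}(p^\infty)$, so it is not isolated in $\{\mathbb{Z}(p^\infty),\mathbb{Q}\}$. Ziegler's rank isolates it only after the points of lower Cantor--Bendixson rank are removed. Your $\psi$ is also not well defined: there are in general infinitely many formulae properly below $\varphi$, and finite sums of formulae not satisfied by $a$ may be satisfied by $a$.

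Your fallback via Theorem~\ref{Thm_CH} is the right tool, but it needs the paper's specific choice of formulae:
\begin{itemize}
\item For each $W\in\cC$, take $\varphi_W$ to be $T$-minimal among the formulae not vanishing on $W$, rather than the generator of the type of an arbitrary $a$.
\item Let $H_{\varphi_W}=\bigcup_n\theta_n$, where the $\theta_n$ are finite sums of formulae below $\varphi_W$ that vanish on $W$. These are closed under finite sums, so $H_{\varphi_W}(W)=0$.
\item Prove, using Fact~\ref{pointed_hull_fact}, that $\varphi_W(V)=H_{\varphi_W}(V)$ for every $V\in\cC$ with $V\not\cong W$.
\end{itemize}
Theorems~\ref{Thm_Second_qf} and~\ref{analytic_dichotomy} then give $[\varphi_W(A):H_{\varphi_W}(A)]=\mathfrak c$. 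Since $B$ is countable, this single computation shows both that $W$ occurs in $U$ and that $\lambda_W=\mathfrak c$, which repairs both gaps at once. The same computation applied to $(M_{\cC})^{\omega}$, decomposed by Garavaglia's theorem (no bin is needed), gives your final isomorphism.
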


\begin{proof}
If $A$ is countable, take $I_1=I$, $I_2=\varnothing$, $B=A$, $U=0$, and $\cC=\varnothing$; the discrete topology gives the assertion. Here $I$ is countable by the standing nonzero-summand convention. We may therefore assume that $A$ is uncountable.

\smallskip \noindent
For sufficiency, Lemma~\ref{tt_closed_set} shows that $M_{\cC}$ is countable, so $U\cong(M_{\cC})^{\gro}$ admits a Polish $R$-module topology as a countable product of discrete countable modules. Giving $B$ the discrete topology then makes $A=B\oplus U$ Polish. This proves sufficiency.

\smallskip \noindent
For necessity, assume that $A$ is Polish. Theorem~\ref{first_theorem} gives that $A=B\oplus U$, where $B$ is a summand bin and $U$ is $\Sigma$-algebraically compact with $\Th(U)=T:=\Th_{\Pun}(A)$. Thus $\cC:=\Cl(U)=\Cl(T)$ is a totally transcendental closed set.

\smallskip \noindent
By Garavaglia's theorem~\cite{garavaglia}, write $U=\bigoplus_{V\in\mathcal D}V^{(\grl_V)}$, where $\mathcal D$ consists of the pairwise nonisomorphic indecomposable pure-injectives that actually occur in $U$, and $\grl_V>0$. A pp-pair $F=\grf/\psi$ is said to vanish on a module $N$ if $F(N)=0$, equivalently $\grf(N)=\psi(N)$. Every pp-pair vanishing on $U$ also vanishes on each direct summand $V$, so $\mathcal D\subseteq\cC$ by the definition of $\Cl(U)$. We first prove $\mathcal D=\cC$, and then determine the cardinals $\grl_W$ indexing the copies of each $W\in\cC$.

\smallskip \noindent
To prove $\cC\subseteq\mathcal D$, fix $W\in\cC$. Recall that ${_R}T$ denotes the axioms for $R$-modules, so ${_R}T\vdash\psi\to\grf$ means that the implication holds in every $R$-module. Here $T=\Th(U)=\Th_{\Pun}(A)$ is the complete theory fixed above. The DCC modulo $T$ gives a $T$-minimal pp-formula $\grf_W(u)$ among those with $\grf_W(W)\neq0$. Thus, if ${_R}T\vdash\psi\to\grf_W$, either $\psi\equiv\grf_W$ modulo $T$ or $\psi(W)=0$. Recall also that every pp-implication valid in $T$ holds at every point of $\cC=\Cl(T)$, by the definition of~$\Cl(T)$.

\smallskip \noindent
Enumerate, with repetitions if necessary, all pp-formulae $\psi(u)$ such that ${_R}T\vdash\psi\to\grf_W$ and $\psi(W)=0$ as $(\psi_i(u):i<\omega)$. For every $n<\omega$, put $\theta_n=\sum_{i\leq n}\psi_i$. Then $\theta_n\leq\theta_{n+1}\leq\grf_W$ and $\theta_n(W)=0$ for every $n<\omega$. With $W$ and $\grf_W$ fixed, define, for every module $N$, the subgroup $H_{\grf_W}(N)=\bigcup_{n<\omega}\theta_n(N)=\sum_{i<\omega}\psi_i(N)$. In particular, $H_{\grf_W}(N)\subseteq\grf_W(N)$ and $H_{\grf_W}(W)=0$.

\smallskip \noindent
We first prove that $\grf_W(V)=H_{\grf_W}(V)$ for every $V\in\cC$ with $V\not\cong W$. Fix $0\neq w\in\grf_W(W)$ and suppose, towards a contradiction, that $a\in\grf_W(V)\setminus H_{\grf_W}(V)$. Given any pp-formula $\rho(u)$, apply minimality to $\grf_W\wedge\rho$. If $\grf_W\wedge\rho\equiv\grf_W$ modulo $T$, the pp-pair $\grf_W/(\grf_W\wedge\rho)$ vanishes in $T$ and hence on both $W$ and $V$, because these belong to $\Cl(T)$. Consequently, $\grf_W(W)=(\grf_W\wedge\rho)(W)$ and $\grf_W(V)=(\grf_W\wedge\rho)(V)$. Since $w\in\grf_W(W)$ and $a\in\grf_W(V)$, both satisfy $\rho$.

\smallskip \noindent
In the other case, $(\grf_W\wedge\rho)(W)=0$. The nonzero element $w\in\grf_W(W)$ cannot satisfy $\rho$, since it would then belong to this zero subgroup. Moreover, $\grf_W\wedge\rho=\psi_j$ for some $j<\omega$, by our enumeration. Hence $(\grf_W\wedge\rho)(V)=\psi_j(V)\subseteq\theta_j(V)\subseteq H_{\grf_W}(V)$, where $H_{\grf_W}(V)=\bigcup_{n<\omega}\theta_n(V)$. If $a$ satisfied $\rho$, its membership in $\grf_W(V)$ would place it in $H_{\grf_W}(V)$, contrary to the choice of $a$. Thus neither $w$ nor $a$ satisfies $\rho$ in this case. We have proved $\tp^+(W,w)=\tp^+(V,a)$. Since $a\notin H_{\grf_W}(V)$ while $0\in H_{\grf_W}(V)$, we have $a\neq0$. Hence both $w$ and $a$ are nonzero, as required in Fact~\ref{pointed_hull_fact} in Subsection~\ref{sec_purity}. Applying that fact to their equal positive pp-types gives $W\cong V$, the required contradiction. Therefore $\grf_W(V)=H_{\grf_W}(V)=\sum_{i<\omega}\psi_i(V)$ for every $V\in\cC\setminus\{W\}$.

\smallskip \noindent
For any direct sum $N=\bigoplus_{j\in J}N_j$, we have $H_{\grf_W}(N)=\bigoplus_{j\in J}H_{\grf_W}(N_j)$ and $\grf_W(N)=\bigoplus_{j\in J}\grf_W(N_j)$.

\smallskip \noindent
We now calculate the index $[\grf_W(A):H_{\grf_W}(A)]$. For every $n<\omega$, we have $\theta_n(W)=0\neq\grf_W(W)$, so $\grf_W/\theta_n$ does not vanish on $W\in\Cl(T)$. It therefore cannot have index $1$ in $T$; its index is infinite, since $T=\Th_{\Pun}(A)$. Every finite part of $\{\grf_W\}\cup\{\neg\theta_n:n<\omega\}$ is consequently consistent with $T$: the finitely many excluded subgroups are all contained in one $\theta_n$, for some $n<\omega$, and this is a proper subgroup of $\grf_W$ in every model of $T$. Theorem~\ref{Thm_Second_qf} gives $[\grf_W(A):H_{\grf_W}(A)]>\aleph_0$. For every $n<\omega$, $\theta_n(A)$ is analytic by Lemma~\ref{pp_analytic}. Its increasing union $H_{\grf_W}(A)=\bigcup_{n<\omega}\theta_n(A)$ is an analytic subgroup by Fact~\ref{countable_union_analytic}. Theorem~\ref{analytic_dichotomy} now gives $[\grf_W(A):H_{\grf_W}(A)]=\mathfrak{c}$.

\smallskip \noindent
We use this index to prove that $W$ actually occurs in $U$. Suppose that $W\notin\mathcal D$. Every $V\in\mathcal D$ then lies in $\cC\setminus\{W\}$, so the pp-type argument above gives $\grf_W(V)=H_{\grf_W}(V)$. Applying the two direct-sum identities to $U=\bigoplus_{V\in\mathcal D}V^{(\grl_V)}$, we obtain $\grf_W(U)=H_{\grf_W}(U)$. Applying them also to $A=B\oplus U$, projection onto $B$ induces $\grf_W(A)/H_{\grf_W}(A)\cong\grf_W(B)/H_{\grf_W}(B)$. Since $B$ is countable, this would give $[\grf_W(A):H_{\grf_W}(A)]\leq\aleph_0$, contradicting the value $\mathfrak{c}$ already proved. Thus $W\in\mathcal D$. This proves $\cC\subseteq\mathcal D$ because $W\in\cC$ was arbitrary. Together with $\mathcal D\subseteq\cC$, we now obtain $\mathcal D=\cC$.

\smallskip \noindent
We now determine the cardinals $\grl_W$, that is, the numbers of copies of each $W\in\cC$ in the decomposition of $U$. Fix $W\in\cC=\mathcal D$ and use the subgroups constructed for this $W$. We have $H_{\grf_W}(W)=0$, while $\grf_W(V)=H_{\grf_W}(V)$ for every $V\in\mathcal D\setminus\{W\}$. The direct-sum identities therefore give
\begin{equation*}
\grf_W(A)/H_{\grf_W}(A)\cong\bigl(\grf_W(B)/H_{\grf_W}(B)\bigr)\oplus\grf_W(W)^{(\grl_W)}.
\end{equation*}
The quotient on the left has cardinality $\mathfrak{c}$. On the right, the first group is countable, and $\grf_W(W)$ is a nonzero subgroup of the countable module $W$ by Lemma~\ref{tt_closed_set}. Hence $\mathfrak{c}\leq\max(\aleph_0,\grl_W)$. Since also $\grl_W\leq|A|=\mathfrak{c}$, we obtain $\grl_W=\mathfrak{c}$. This holds for every $W\in\cC$, and therefore $U\cong(M_{\cC})^{(\mathfrak{c})}$.

\smallskip \noindent
It remains to prove $U\cong(M_{\cC})^{\gro}$. Put $P=(M_{\cC})^{\gro}$. Since $U$ is uncountable, $\cC$ is nonempty and $P$ is uncountable. The module $M_{\cC}$ is countable by Lemma~\ref{tt_closed_set}, so its countable product $P$ is Polish. It is also $\Sigma$-algebraically compact: pp-formulae commute with products, so the DCC passes from $M_{\cC}$ to $P$. The same pp-pairs vanish on $P$ and $M_{\cC}$, giving $\Cl(P)=\cC$. Proposition~\ref{discrete_product} gives $\Th(P)=\Th_{\Pun}(P)$. Thus $\Th(P)$ and $T$ have the same associated closed set and both have only $1$ or infinite pp-indices, so Fact~\ref{Ziegler_topology} gives $\Th(P)=T$.

\smallskip \noindent
By Garavaglia's theorem, write $P=\bigoplus_{V\in\mathcal D_P}V^{(\mu_V)}$, where $\mathcal D_P$ consists of its actual indecomposable pure-injective summand types and $\mu_V>0$. The direct-summand argument used for $U$ gives $\mathcal D_P\subseteq\Cl(P)=\cC$. Since $\Th(P)=T$, we may use the same $\grf_W$, $(\theta_n:n<\omega)$ and $H_{\grf_W}$ constructed above for each $W\in\cC$. Applying Theorems~\ref{Thm_Second_qf} and~\ref{analytic_dichotomy} to $P$ gives $[\grf_W(P):H_{\grf_W}(P)]=\mathfrak{c}$.

\smallskip \noindent If $W\notin\mathcal D_P$, every summand in this decomposition satisfies $\grf_W(V)=H_{\grf_W}(V)$, so the direct-sum identities give $\grf_W(P)=H_{\grf_W}(P)$. The index would then be $1$, a contradiction. Hence $\mathcal D_P=\cC$. For $W\in\cC$, only the copies of $W$ contribute to the quotient, giving $\grf_W(P)/H_{\grf_W}(P)\cong\grf_W(W)^{(\mu_W)}$. The left side has cardinality $\mathfrak{c}$, and $W$ is countable; hence $\mathfrak{c}\leq\max(\aleph_0,\mu_W)$, while $\mu_W\leq|P|=\mathfrak{c}$. Therefore $\mu_W=\mathfrak{c}$ for every $W\in\cC$, and $P\cong(M_{\cC})^{(\mathfrak{c})}\cong U$.

\smallskip \noindent
Finally, $\cC=\Cl(\Th_{\Pun}(A))$ depends only on the abstract module $A$, and the normal form therefore makes the isomorphism type of $U$ an invariant of $A$.
\end{proof}

\begin{corollary} \label{non_Arch}
	If an uncountable Polish module $A$ admits a Kaplansky decomposition, then $A$ admits a non-Archimedean Polish topology.
\end{corollary}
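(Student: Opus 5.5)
The plan is to deduce this directly from the Main Theorem (Theorem~\ref{main_th_sec}), using its necessity direction followed by the explicit topology from its sufficiency direction.

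\textbf{Step 1: the normal form.} Let $A=\bigoplus_{i\in I}A_i$ be a Kaplansky decomposition of the uncountable Polish module $A$. The necessity half of Theorem~\ref{main_th_sec} yields a partition $I=I_1\cupdot I_2$ and a totally transcendental closed set $\cC\subseteq\Zg(R)$ such that
\[
A=B\oplus U,\qquad B=\bigoplus_{i\in I_1}A_i,\qquad U\cong(M_{\cC})^{\gro}.
\]
Here $B$ is a countable bin. By Lemma~\ref{tt_closed_set}, $M_{\cC}$ is countable.

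\textbf{Step 2: topologize the pieces.} Give $B$ the discrete topology. It is Polish since it is countable, and it is non-Archimedean because $\{0\}$ is an open subgroup. Give $(M_{\cC})^{\gro}$ the product of the discrete topologies; this is a Polish $R$-module by Proposition~\ref{discrete_product}. A countable neighborhood basis of zero there consists of the open submodules
\[
\{x : x_0=\cdots=x_{n-1}=0\}.
\]
Transport this topology to $U$ along the chosen isomorphism.

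\textbf{Step 3: combine.} Give $A\cong B\times U$ the product topology, transported along the internal direct-sum isomorphism. This is Polish, and addition and scalar multiplication are continuous coordinatewise, since $R$ is discrete. The sets $\{0\}\times V_n$, with $V_n$ as in Step~2, form a countable basis of open subgroups at zero, so the topology is non-Archimedean.

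There is no real obstacle in this argument. The only point to check is that transporting a topology along an abstract module isomorphism preserves being a Polish non-Archimedean module topology, and this holds because the isomorphism is $R$-linear.
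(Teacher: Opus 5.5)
Your proposal is correct and follows the paper's own argument. The corollary is read off from the Main Theorem: its necessity direction gives $A=B\oplus U$ with $B$ countable and $U\cong(M_{\cC})^{\gro}$, and its sufficiency direction supplies the topology (discrete on $B$, product of discrete topologies on $U$), which has the countable basis of open subgroups you exhibit.
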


\section{Applications to rings and modules}\label{sec_applications}

We now prove the existence theorems announced in the introduction. By Theorem~\ref{main_th_sec}, an uncountable Polish module $A$ with a Kaplansky decomposition has a decomposition $A=B\oplus U$, where $B$ is a countable summand bin and
\[
 U\cong M_{\cC}^{(\mathfrak{c})}\cong M_{\cC}^{\omega},
 \qquad \cC=\Cl(\Th_{\Pun}(A)).
\]
The closed set $\cC$ is nonempty and totally transcendental, and every point of $\cC$ occurs as a direct summand of $U$. Thus $U$ is determined up to isomorphism by $A$. In particular, if $A$ is projective, then every point of $\cC$ is projective. These conclusions concern the unlimited summand, i.e., the summand $U$, while the countable summand $B$, and hence $\Th(A)$, need not be totally transcendental.

\subsection{Uncountable free Polish modules}

Recall that a ring $R$ is \emph{right coherent} if every finitely generated right ideal is finitely presented. We use the following pp-formula characterization.

\begin{fact}[{\cite[Theorem~14.16]{prest_first_book}}]\label{coherence_pp}
Let $R$ be a ring. Then $R$ is right coherent if and only if $\grf({}_{R}R)$ is a finitely generated right ideal for every pp-formula $\grf(u)$ in one free variable.
\end{fact}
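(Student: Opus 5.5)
The plan is to work directly with the right $R$-module structure on solution sets. For a pp-formula $\grf(u)=\exists\bfv\,(\bfa u\doteq B\bfv^t)$ in one free variable, with $\bfa$ a column in $R^m$ and $B\in M_{m\times k}(R)$, the solution set in ${}_RR$ can be described linearly. The map
\[
 R^{1+k}\to R^m,\qquad (u,\bfv)\mapsto \bfa u-B\bfv^t,
\]
is a homomorphism of right $R$-modules, since left multiplication by matrices commutes with right multiplication by scalars. Its kernel $G$ is the set of solutions of the quantifier-free part, and $\grf({}_RR)$ is the image of $G$ under the projection onto the first coordinate. In particular, $\grf({}_RR)$ is a right ideal. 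The standard reformulation of right coherence I will use is the following: every finitely generated submodule of a free right module of finite rank is finitely presented. Equivalently, by Schanuel's lemma, every right-linear map $R^n\to R^m$ between finite free right modules has a finitely generated kernel.

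For the forward direction, assume $R$ is right coherent. The kernel $G$ of the map $R^{1+k}\to R^m$ above is then finitely generated. Its image under the right-linear projection $(u,\bfv)\mapsto u$ is $\grf({}_RR)$, which is therefore finitely generated as well.

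For the converse, assume every one-variable pp-definable subgroup of ${}_RR$ is a finitely generated right ideal. Let $I=a_1R+\cdots+a_nR$ be a finitely generated right ideal. By Schanuel, $I$ is finitely presented exactly when the kernel $K_n$ of
\[
 R^n\to R,\qquad (r_i)_{i}\mapsto \textstyle\sum_i a_ir_i,
\]
is finitely generated. I would prove that $K_n$ is finitely generated by induction on $n$.

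For $n=1$, $K_1$ is the right annihilator of $a_1$. This is defined by the quantifier-free pp-formula $a_1u\doteq0$, so it is finitely generated by hypothesis.

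For the inductive step, project $K_n$ onto its last coordinate. The image is
\[
 \Bigl\{u_n : \exists u_1,\dots,u_{n-1}\ \bigl(a_1u_1+\cdots+a_nu_n\doteq0\bigr)\Bigr\},
\]
which is defined by a pp-formula in the single free variable $u_n$, and hence is finitely generated. The kernel of this projection is $K_{n-1}\times\{0\}$, where $K_{n-1}$ is the kernel associated with $a_1,\dots,a_{n-1}$, and this is finitely generated by induction. An extension of a finitely generated module by a finitely generated module is finitely generated: lift generators of the image to $K_n$ and add them to the generators of the kernel. So $K_n$ is finitely generated, and $I$ is finitely presented.

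The only delicate point is choosing the right equivalent form of coherence, namely finite presentation of finitely generated right ideals, which is the definition recalled in the paper. I also need Schanuel's lemma to transfer finite generation between the kernel of the chosen presentation $R^n\to I$ and the kernel of an arbitrary one. As an alternative to the induction, one could invoke Chase's criterion: $R$ is right coherent if and only if right annihilators of elements are finitely generated and intersections of two finitely generated right ideals are finitely generated. Both of these are visibly one-variable pp-definable: the annihilator by $a u\doteq0$, and the intersection of $\sum_i a_iR$ and $\sum_j b_jR$ by the conjunction of $\exists\bfv\,(u\doteq\sum_i a_iv_i)$ and $\exists\bfw\,(u\doteq\sum_j b_jw_j)$. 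The induction above avoids citing Chase's criterion.
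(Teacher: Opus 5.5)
The paper does not prove this fact; it quotes it from Prest's book. There is therefore no internal argument to match, and your proof is a correct, self-contained substitute. The citation buys the paper brevity, while your route shows which formulas actually matter.

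Your converse needs nothing beyond the definition. The induction on $n$, projecting $K_n$ to its last coordinate, uses only the one-variable formulas $au\doteq 0$ and $\exists u_1,\ldots,u_{n-1}\,(a_1u_1+\cdots+a_nu_n\doteq 0)$. So it shows the hypothesis need only be checked on these formulas, much as in Chase's annihilator-and-intersection criterion.

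The forward direction, by contrast, uses a stronger form of coherence than the definition recalled in the paper. The bridge between the two forms is not Schanuel's lemma, which only compares two presentations of the same finitely presented module. You need that over a right coherent ring every matrix map $R^n\to R^m$ of right modules has finitely generated kernel, i.e.\ that $R^m_R$ is a coherent module. This follows by induction on $m$:
\begin{enumerate}[(1)]
\item The kernel of one row $R^n\to R$ is finitely generated, because that row's image is a finitely generated, hence finitely presented, right ideal.
\item If $\kappa\colon R^{n'}\to R^n$ maps onto that kernel, then the kernel of the full matrix is the $\kappa$-image of the kernel of the remaining $m-1$ rows composed with $\kappa$, which is finitely generated by induction.
\end{enumerate}
Citing this standard fact is fine. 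It is, however, the only place where right coherence enters your forward direction, so it deserves that sentence.
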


If $I=r_1R+\cdots+r_tR$ is a finitely generated right ideal, write $I|u$ for the pp-formula $\exists v_1,\ldots,v_t\,(u\doteq\sum_{j=1}^t r_jv_j)$. Its value in the left regular module ${}_{R}R$ is $I$.

A \emph{projective cover} of a module $M$ is a surjective homomorphism $p\colon P\to M$, with $P$ projective, such that no proper submodule of $P$ maps onto $M$. Equivalently, $\ker p$ is \emph{superfluous}: if $L\leq P$ and $L+\ker p=P$, then $L=P$. A ring is \emph{left perfect} if every $R$-module has a projective cover. The following equivalent condition describes left perfectness directly in terms of ideals.

\begin{fact}[Bass's Theorem, {\cite[Theorem~P]{Bass}}]\label{bass_perfect}
Let $R$ be a ring. Then $R$ is left perfect if and only if every descending chain $a_0R\supseteq a_1R\supseteq\cdots$ of principal right ideals eventually stabilizes.
\end{fact}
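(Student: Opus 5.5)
Since this is Bass's classical theorem, the plan is to route both directions through Bass's intrinsic characterization of left perfect rings. That characterization says $R$ is left perfect if and only if $R/J$ is semisimple artinian and $J=J(R)$ is \emph{left T-nilpotent}: for every sequence $a_1,a_2,\dots$ in $J$ there is $n$ with $a_1a_2\cdots a_n=0$. I would first recall, or prove via projective covers of the modules $R/J\otimes$-type quotients, that this characterization is equivalent to every left module having a projective cover. That is the standard route through Nakayama's lemma for left T-nilpotent ideals (i.e.\ $JM\neq M$ for every nonzero left module $M$). I will then check the principal-right-ideal DCC against these two ring-theoretic conditions.

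\emph{DCC $\Rightarrow$ left perfect.}
\begin{enumerate}[(1)]
\item \emph{$J$ is left T-nilpotent.} Given $a_1,a_2,\ldots\in J$, the chain
\[
a_1R\supseteq a_1a_2R\supseteq a_1a_2a_3R\supseteq\cdots
\]
stabilizes. So $x=a_1\cdots a_n$ satisfies $x=xa_{n+1}r$ for some $r\in R$. Then $x(1-a_{n+1}r)=0$, and $1-a_{n+1}r$ is a unit since $a_{n+1}\in J$, so $x=0$.
\item \emph{$R/J$ is semisimple.} The DCC passes to $\bar R=R/J$, since principal right ideals of $\bar R$ lift to principal right ideals of $R$ and strict descent is preserved along the lifted chain $a_0R\supseteq a_0b_0R\supseteq\cdots$. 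In the semiprimitive ring $\bar R$, DCC gives a minimal nonzero principal right ideal inside every nonzero right ideal. Semiprimeness makes such a minimal right ideal $e\bar R$ with $e^2=e$. DCC also forbids infinite strictly descending chains $(1-e_1-\cdots-e_n)\bar R$, so there is no infinite orthogonal family of idempotents. Peeling off minimal idempotent summands therefore terminates and writes $\bar R$ as a finite sum of minimal right ideals, i.e.\ $\bar R$ is semisimple.
\end{enumerate}

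\emph{Left perfect $\Rightarrow$ DCC.} Let $a_0R\supseteq a_1R\supseteq\cdots$ with $a_{n+1}=a_nb_n$.
\begin{enumerate}[(1)]
\item Since $\bar R$ is semisimple, hence right artinian, the image chain stabilizes in $\bar R$. After discarding finitely many terms, $\bar a_n\bar R$ is constant.
\item One then wants to use left T-nilpotence to force stabilization in $R$. The natural attempt is to split each $b_n$ as a unit-like part plus a part in $J$, and use T-nilpotence to kill the infinite products $b_nb_{n+1}\cdots$ of the $J$-parts.
\item A cleaner formulation I would try first: pass to the left module $M=\varinjlim(R\xrightarrow{\cdot b_0}R\xrightarrow{\cdot b_1}\cdots)$, a countably generated flat module. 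Left perfect rings have all flat left modules projective, and a projective cover argument shows the direct system must eventually consist of split maps compatible with the chain. Translating this back gives $a_nR=a_{n+1}R$ for large $n$.
\end{enumerate}
This step, extracting stabilization of an actual chain of right ideals from the left-module hypothesis, is where the left/right switch happens. I expect it to be the main obstacle.

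Since the paper uses this only as a cited Fact, it would be acceptable to defer the converse to \cite[Theorem~P]{Bass} and only record the easy computations above.
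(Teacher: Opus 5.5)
The paper gives no proof of this Fact; it only cites Bass's Theorem~P, so your proposal has to stand on its own. Your half ``DCC $\Rightarrow$ left perfect'' is sound:
\begin{itemize}
\item the computation $x(1-a_{n+1}r)=0$ gives left T-nilpotence in exactly the convention that matches left perfectness;
\item the lifted chain transfers the DCC to $\bar R$;
\item the Brauer-lemma peeling works once you keep the idempotents orthogonal. If $e\bar R$ is a minimal right ideal inside $f\bar R$, with $f=1-e_1-\cdots-e_n$, replace $e$ by $ef$. It is idempotent, orthogonal to $e_1,\dots,e_n$, and generates the same right ideal.
\end{itemize}
Note, however, that this half only reduces to another clause of Theorem~P: that $R/J$ semisimple and $J$ left T-nilpotent imply every left module has a projective cover. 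That clause needs idempotent lifting modulo the nil ideal $J$ and the Nakayama property $JM\neq M$ for $M\neq0$. Your remark about ``$R/J\otimes$-type quotients'' only gestures at it.

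The genuine gap is in the converse. Step~(2) is not an argument. Step~(3) rests on the claim that projectivity of $M=\varinjlim(R\xrightarrow{\cdot b_0}R\xrightarrow{\cdot b_1}\cdots)$ forces the transition maps to be eventually split, and that claim is false. Over $R=\mathbb{Z}/4\mathbb{Z}$ with all $b_n=2$, the limit is $0$, hence projective. Yet no transition map splits in any sense, since its image $2R$ is not a summand of $R$.

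What you need instead is that projective modules are Mittag-Leffler, which here can be checked by hand.
\begin{itemize}
\item Prepend $a_0$: use $R\xrightarrow{\cdot a_0}R\xrightarrow{\cdot b_0}R\xrightarrow{\cdot b_1}\cdots$ with limit maps $\mu_m$, so that $\mu_0(1)=a_{m-1}\mu_m(1)$ for $m\geq1$.
\item If $M$ is projective, take $F'$ free on $(e_\lambda)$ and maps $\iota\colon M\to F'$, $\pi\colon F'\to M$ with $\pi\iota=1_M$. Write $\iota\mu_0(1)=\sum_{k=1}^tc_ke_{\lambda_k}$.
\item Since $\iota\mu_0(1)=a_{m-1}\,\iota\mu_m(1)$ for every $m\geq1$, each $c_k$ lies in $\bigcap_ma_mR$.
\item Pick $N\geq1$ and $d_k\in R$ with $\pi(e_{\lambda_k})=\mu_N(d_k)$. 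Then $\mu_N(\sum_kc_kd_k)=\pi\iota\mu_0(1)=\mu_0(1)=\mu_N(a_{N-1})$.
\item Hence $(a_{N-1}-\sum_kc_kd_k)\,b_{N-1}\cdots b_{N'-2}=0$ for some $N'\geq N$. This says $a_{N'-1}\in\sum_kc_kR\subseteq\bigcap_ma_mR$, so the chain is constant from $N'-1$ on.
\end{itemize}
This makes your steps~(1) and~(2) unnecessary.

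You also still owe ``flat $\Rightarrow$ projective'' over a left perfect ring, which is yet another clause of Theorem~P. Take a projective cover $P\to M$ with kernel $K$. Flatness of $M$ gives $K\cap JP=JK$. Superfluity gives $K\subseteq\mathrm{Rad}\,P=JP$. So $K=JK$, and $K=0$ by left T-nilpotence.

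With these two additions the converse is proved. Without them it rests entirely on the citation, exactly as in the paper.
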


The \emph{Jacobson radical} $J(R)$ is the intersection of all maximal right ideals of $R$ (equivalently, of all maximal left ideals). An element $e\in R$ is an \emph{idempotent} if $e^2=e$. Idempotents $e$ and $f$ are \emph{orthogonal} if $ef=fe=0$, and a nonzero idempotent is \emph{primitive} if it cannot be written as a sum of two nonzero orthogonal idempotents. Over a left perfect ring, $R/J(R)$ is semisimple artinian (a finite product of full matrix rings over division rings), the identity $1_R$ is a finite sum of pairwise orthogonal primitive idempotents, and every projective $R$-module is a direct sum of indecomposable summands of ${}_{R}R$~\cite[Theorem~2.1 and p.~476]{Bass}.

\begin{proposition}\label{useful_prop}
Let $R$ be a countable ring. If an uncountable free $R$-module admits a Polish topology, then ${}_{R}R$ is $\Sigma$-algebraically compact and $R$ is left perfect.
\end{proposition}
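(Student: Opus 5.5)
Let $F=R^{(\kappa)}$ be an uncountable free $R$-module carrying a Polish topology; then $\kappa\leq\mathfrak{c}$, and $F=\bigoplus_{\alpha<\kappa}R$ is a Kaplansky decomposition because $R$ is countable. The plan is to apply the Main Theorem~\ref{main_th_sec} (or directly Theorem~\ref{first_theorem}) to this decomposition. This gives a partition $\kappa=I_1\cupdot I_2$ with $I_1$ countable such that $U=\bigoplus_{\alpha\in I_2}R\cong R^{(I_2)}$ is $\Sigma$-algebraically compact. Since $I_2$ is nonempty (indeed uncountable), ${}_{R}R$ is a direct summand of $U$. The DCC on pp-definable subgroups passes to direct summands, because $\grf(N_1\oplus N_2)=\grf(N_1)\oplus\grf(N_2)$; hence ${}_{R}R$ itself is $\Sigma$-algebraically compact. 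This is the first assertion.

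For left perfectness I would use Bass's Theorem (Fact~\ref{bass_perfect}). Take a descending chain $a_0R\supseteq a_1R\supseteq\cdots$ of principal right ideals. Each $a_nR$ is pp-definable in ${}_{R}R$ in one variable: it is $\grf_n({}_{R}R)$ for the formula $\grf_n(u)=\exists v\,(u\doteq a_nv)$, which is the formula $a_nR\mid u$ introduced above. So the chain of right ideals is a descending chain of pp-definable subgroups of ${}_{R}R$. By the DCC, that is, by $\Sigma$-algebraic compactness in the form of Definition~\ref{gava_ex}, it stabilizes. Bass's Theorem then gives that $R$ is left perfect. If one prefers the formulation of Definition~\ref{gava_ex} with formulas implying each other, replace $\grf_n$ by $\bigwedge_{i\leq n}\grf_i$; this does not change the subgroups, since the chain is already descending.

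The only step needing care is the first. Theorem~\ref{first_theorem} is stated for index sets $\beta\geq\omega_1$, so we reindex $\kappa$ as an ordinal; it needs a translation-invariant complete metric, which exists by the remark following that theorem; and condition (A)(5) follows from the continuity of scalar multiplication. All of these hypotheses hold. I therefore do not expect a genuine obstacle: the substantive work is in Theorem~\ref{first_theorem}, and what remains is to pass the DCC from the uncountable free summand to a single copy of ${}_{R}R$ and to identify principal right ideals as pp-definable subgroups.
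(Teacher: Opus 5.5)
Your proposal is correct and follows the paper's proof: apply the DCC criterion (Theorem~\ref{Thm_DCC}, i.e.\ Theorem~\ref{first_theorem}) to the decomposition into copies of ${}_{R}R$, and pass the DCC to a single copy because pp-formulae commute with direct sums. Then use the pp-definability of $aR$ by $a|u$ together with Bass's Theorem. The only cosmetic difference is that the paper notes $\kappa=\mathfrak{c}$ and that the remaining summand is again $({}_{R}R)^{(\mathfrak{c})}$, whereas you simply observe that ${}_{R}R$ is a direct summand of the uncountable remainder; this changes nothing.
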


\begin{proof}
An uncountable Polish space has cardinality $\mathfrak{c}$, so the free module is $A=({}_{R}R)^{(\mathfrak{c})}$. Apply Theorem~\ref{Thm_DCC} to its decomposition into copies of ${}_{R}R$. After removing countably many copies, the remaining summand is still isomorphic to $({}_{R}R)^{(\mathfrak{c})}$ and is $\Sigma$-algebraically compact. Since pp-formulae commute with direct sums, the DCC on its pp-definable subgroups implies the same DCC on ${}_{R}R$. Thus ${}_{R}R$ is $\Sigma$-algebraically compact. In particular, every descending chain of principal right ideals stabilizes, because $aR$ is defined in ${}_{R}R$ by the pp-formula $a|u$. Bass's Theorem (Fact~\ref{bass_perfect}) therefore gives that $R$ is left perfect.
\end{proof}

\begin{proposition}\label{useful_prop_2}
Let $R$ be a countable ring. If an uncountable free $R$-module admits a Polish topology, then $R$ is right coherent.
\end{proposition}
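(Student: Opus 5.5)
The plan is to verify right coherence through the pp-characterization of Fact~\ref{coherence_pp}: for every pp-formula $\grf(u)$ in one free variable, I will show that the right ideal $\grf({}_{R}R)$ is finitely generated. As in the proof of Proposition~\ref{useful_prop}, the free Polish module is $A=({}_{R}R)^{(\mathfrak{c})}$. Pp-formulae commute with direct sums, so $\grf(A)=\grf(R)^{(\mathfrak{c})}$.

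Fix $\grf(u)$. Since $R$ is countable, enumerate $\grf(R)=\{r_0,r_1,\ldots\}$ and put $I_i=r_0R+\cdots+r_iR$. Let $\psi_i(u)$ be the pp-formula $I_i|u$.

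\emph{Step 1: each $\psi_i$ implies $\grf$ in every module.} Let $M$ be a module and $m=\sum_{j\le i}r_jv_j$ with $v_j\in M$. The morphism $R\to M$, $1\mapsto v_j$, sends $r_j\in\grf(R)$ to $r_jv_j$, so $r_jv_j\in\grf(M)$ by functoriality. Since $\grf(M)$ is a subgroup, $m\in\grf(M)$. Hence ${}_RT\vdash\psi_i\to\grf$, and $\psi_0\le\psi_1\le\cdots\le\grf$ is an ascending chain bounded by $\grf$, as Theorem~\ref{Thm_Second_qf} requires.

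\emph{Step 2: the union exhausts $\grf(A)$.} We have $\psi_i(A)=I_i^{(\mathfrak{c})}$, and $\bigcup_i I_i=\grf(R)$. An element of $\grf(A)$ has finite support, so its finitely many coordinates all lie in a single $I_i$. Hence $\sum_i\psi_i(A)=\grf(A)$, and the index $[\grf(A):\Sigma_i\psi_i(A)]$ equals $1$. By the contrapositive of Theorem~\ref{Thm_Second_qf}, the type $\{\grf\}\cup\{\neg\psi_i:i<\omega\}$ is inconsistent with $\Th_{\Pun}(A)$.

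\emph{Step 3: compactness.} Some finite subset of the type is inconsistent with $\Th_{\Pun}(A)$. Because the chain is ascending, this finite subset reduces to $\{\grf,\neg\psi_m\}$ for one $m$. Therefore $\Th_{\Pun}(A)\vdash\grf\to\psi_m$, i.e.\ $\Th_{\Pun}(A)\models[\grf:\psi_m]=1$. By the definition of $\Th_{\Pun}(A)$, this means $[\grf(A):\psi_m(A)]\le\aleph_0$.

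\emph{Step 4: conclusion.} We have $\grf(A)/\psi_m(A)\cong(\grf(R)/I_m)^{(\mathfrak{c})}$. This group is uncountable unless $\grf(R)/I_m=0$, so $\grf(R)=I_m$ is finitely generated. Fact~\ref{coherence_pp} then gives right coherence.

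The step I expect to need the most care is Step~1. The chain condition of Theorem~\ref{Thm_Second_qf} requires the implication $\psi_i\to\grf$ in every module, not just membership of the $r_j$ in $\grf(R)$, and the functoriality argument is what supplies this. The compactness reduction in Step~3 is routine once the chain is known to be ascending.
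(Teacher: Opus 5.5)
Your proof is correct and follows the paper's argument. You use the same chain $\psi_i=(I_i|u)$, the same finite-support observation that $\sum_i\psi_i(A)=\grf(A)$, and the same Baire-category index theorem (Theorem~\ref{Thm_Second_qf}, where the paper cites its strengthening Theorem~\ref{Thm_CH}); the only difference is that you argue contrapositively, extracting one $\psi_m$ of countable index and computing $(\grf(R)/I_m)^{(\mathfrak{c})}$ directly, whereas the paper argues by contradiction, obtaining $\Th(A)$-consistency of the type by compactness and invoking Proposition~\ref{discrete_product} to identify $\Th(A)$ with $\Th_{\Pun}(A)$.
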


\begin{proof}
Put $A=({}_{R}R)^{(\mathfrak{c})}$. By Fact~\ref{coherence_pp}, we must show that $\grf({}_{R}R)$ is a finitely generated right ideal for every pp-formula $\grf(u)$ in one free variable. It is a right ideal because right multiplication by any $r\in R$ is an endomorphism of ${}_{R}R$, and homomorphisms preserve pp-formulae. Since $R$ is countable, write $\grf({}_{R}R)=\bigcup_{i<\omega}I_i$ as an increasing union of finitely generated right ideals, and put $\psi_i(u)=(I_i|u)$.

Suppose that every $I_i$ is properly contained in $\grf({}_{R}R)$. Then every finite subset of $q(u)=\{\grf(u)\}\cup\{\neg\psi_i(u):i<\omega\}$ is realized in ${}_{R}R$, and hence in any one of the direct summands of $A$ isomorphic to ${}_{R}R$, with all other coordinates zero. By compactness, $q$ is $\Th(A)$-consistent. Proposition~\ref{discrete_product}, applied to the countable module ${}_{R}R$, says that every pp-pair in $A$ has index $1$ or $\mathfrak{c}$; hence $\Th(A)=\Th_{\Pun}(A)$. Theorem~\ref{Thm_CH} now gives
\[
 [\grf(A):\textstyle\sum_{i<\omega}\psi_i(A)]=\mathfrak{c}.
\]
On the other hand, pp-formulae commute with direct sums, so $\grf(A)=\grf({}_{R}R)^{(\mathfrak{c})}$ and $\psi_i(A)=\psi_i({}_{R}R)^{(\mathfrak{c})}=I_i^{(\mathfrak{c})}$. Each element of $\grf(A)$ has finitely many nonzero coordinates. Each such coordinate belongs to some $I_i$, and, since the ideals form an ascending chain, one common $I_i$ contains them all. Thus $\grf(A)=\bigcup_{i<\omega}\psi_i(A)=\sum_{i<\omega}\psi_i(A)$, making the displayed index $1$, a contradiction. Therefore $\grf({}_{R}R)=I_i$ for some $i<\omega$, as required by Fact~\ref{coherence_pp}.
\end{proof}

We next compute the cardinal quotients in O'Neill's freeness criterion; these are also the quotients considered in Example~\ref{O'Neill}.

\begin{lemma}\label{third_condition_pleonastic}
Let $R$ be a countable ring, put $J=J(R)$, and let $f\in R$ be a nonzero idempotent. Then
\[
 \bigl|(fR^\omega+JR^\omega)/JR^\omega\bigr|=\mathfrak{c}.
\]
Here $JR^\omega$ means $J\cdot R^\omega$, the submodule generated by products $av$ with $a\in J$ and $v\in R^\omega$.
\end{lemma}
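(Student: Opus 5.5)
The plan is a direct construction of continuum many pairwise distinct cosets; I do not expect to need the descriptive-set-theoretic machinery. The upper bound is immediate: $|R^\omega|=\mathfrak{c}$ because $R$ is countable and nonzero, so the quotient has cardinality at most $\mathfrak{c}$. The work is the lower bound.

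\emph{Step 1: every element of $JR^\omega$ has all coordinates in $J$.} An element of $JR^\omega$ is a finite sum $\sum_{k} a_k v_k$ with $a_k\in J$ and $v_k\in R^\omega$. Its $n$-th coordinate is $\sum_k a_k v_k(n)$. This lies in $J$, because $J$ is a two-sided ideal and each $a_k\in J$.

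\emph{Step 2: $f\notin J$.} If $f\in J$, then $1-f$ is a unit. But $(1-f)f=f-f^2=0$, so multiplying by $(1-f)^{-1}$ gives $f=0$, contrary to $f\neq0$. Hence $f\notin J$, and also $-f\notin J$.

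\emph{Step 3: continuum many cosets.} For $S\subseteq\omega$, let $\chi_S\in R^\omega$ be the characteristic vector, with entries $1$ on $S$ and $0$ elsewhere, and put $x_S:=f\chi_S\in fR^\omega$. Let $S\neq T$ and choose $n$ in their symmetric difference. The $n$-th coordinate of $x_S-x_T$ is $\pm f$, which is not in $J$ by Step 2. By Step 1, $x_S-x_T\notin JR^\omega$. Thus the $2^{\aleph_0}$ elements $x_S+JR^\omega$ are pairwise distinct in $(fR^\omega+JR^\omega)/JR^\omega$, which gives the lower bound $\mathfrak{c}$.

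The only step needing any care is Step 1. There one must use the stated meaning of $JR^\omega$ as the submodule generated by the products $av$, consisting of finite sums. The argument does not need $JR^\omega$ to equal the possibly larger set $J^\omega$: only the containment $JR^\omega\subseteq J^\omega$ is used.

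Alternatively, one could follow Example~\ref{O'Neill}. Write $J=\bigcup_i J_i$ as an increasing union of finitely generated right ideals. Then $fR^\omega\cap JR^\omega=\bigcup_i (fR^\omega\cap J_iR^\omega)$ is an ascending union of pp-definable subgroups. The type $\{(1-f)u\doteq0\}\cup\{\neg J_i|u\}$ is realized finitely in a single coordinate by $f$, using Step 2. It is consistent with $\Th(R^\omega)=\Th_{\Pun}(R^\omega)$ by Proposition~\ref{discrete_product}. Theorem~\ref{Thm_Second_qf} together with Corollary~\ref{sum_dichotomy} would then give index $\mathfrak{c}$. The direct argument above is simpler, so I would present it.
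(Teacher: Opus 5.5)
Your proof is correct and matches the paper's argument: the paper also uses the containment $JR^\omega\subseteq\prod_{n<\omega}J$, deduces $f\notin J$ from $f(1-f)=0$, and takes the vectors with coordinate $f$ on a set $X\subseteq\omega$ and $0$ elsewhere to obtain $2^{\aleph_0}$ distinct cosets. As in your argument, the upper bound there is $|R^\omega|=\mathfrak{c}$.
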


\begin{proof}
Every coordinate of an element of $J\cdot R^\omega$ lies in $J$, so $J\cdot R^\omega\subseteq\prod_{n<\omega}J$. Also $f\notin J$: otherwise $1-f$ would be a unit, whereas $f(1-f)=0$ and $f\neq0$.

\smallskip\noindent
For $X\subseteq\omega$, let $v_X\in R^\omega$ have coordinate $f$ on $X$ and $0$ off $X$. Then $fv_X=v_X$, so $v_X\in fR^\omega$. If $X\neq Y$, some coordinate of $v_X-v_Y$ is $f$ or $-f$, and hence does not belong to $J$. Thus $v_X-v_Y\notin JR^\omega$. The cosets $v_X+JR^\omega$ are therefore pairwise distinct, giving the lower bound $\mathfrak{c}$. Countability of $R$ gives the upper bound $|R^\omega|=\mathfrak{c}$.
\end{proof}

We use the following consequence of O'Neill's criterion, in the countable-power case.

\begin{fact}\label{oneill_freeness}
Let $R$ be a countable right coherent ring such that ${_R}R$ is $\Sigma$-algebraically compact. If
$|(fR^\omega+J(R)R^\omega)/J(R)R^\omega|=\mathfrak{c}$ for every primitive idempotent $f\in R$, then $R^\omega$ is free.
\end{fact}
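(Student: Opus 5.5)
The plan is to combine Chase's theorem with the structure theory of projective modules over left perfect rings, and to read off the multiplicities of the indecomposable projective summands of $P:=R^\omega$ from the semisimple quotient $P/JP$, where $J=J(R)$.

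\textbf{Step 1: $P$ is projective.} Since ${}_{R}R$ is $\Sigma$-algebraically compact, every descending chain $a_0R\supseteq a_1R\supseteq\cdots$ of principal right ideals stabilizes, because $aR$ is pp-definable by $a|u$. Bass's Theorem (Fact~\ref{bass_perfect}) then makes $R$ left perfect. Together with right coherence, Chase's theorem~\cite[Theorem~3.3]{chase} shows that $\RProj$ is closed under products, so $P$ is projective.

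\textbf{Step 2: decompose $P$ and $P/JP$.} By the facts recalled from~\cite{Bass}, $P\cong\bigoplus_{k}(Re_k)^{(\lambda_k)}$, where $1=e_1+\cdots+e_m$ is a decomposition into orthogonal primitive idempotents and $e_1,\dots,e_t$ are representatives of the isomorphism types of the $Re_k$. Here $Re_k\cong Re_l$ exactly when the simple modules $S_k:=Re_k/Je_k$ are isomorphic. Modding out by $J$ gives
\[
 P/JP\cong\bigoplus_{k\le t}S_k^{(\lambda_k)},
\]
a semisimple $R/J$-module in which the cardinals $\lambda_k$ are uniquely determined. One has to check that $J\cdot R^\omega$ is the same submodule as $JP$ computed through this decomposition. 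This holds because $JM$ is defined functorially for any module $M$ and commutes with direct sums and isomorphisms.

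\textbf{Step 3: count the multiplicities.} Fix a primitive idempotent $f$. Then
\[
 (fR^\omega+JR^\omega)/JR^\omega=f\,(P/JP)\cong\bigoplus_k (fS_k)^{(\lambda_k)}.
\]
We have $fS_k\cong\operatorname{Hom}_R(Rf,S_k)$. This group is nonzero exactly when $S_k\cong Rf/Jf$, and in that case it is the countable division ring $\End(S_k)$. Hence the hypothesis $|f(P/JP)|=\mathfrak c$ forces $\mathfrak c\le\max(\aleph_0,\lambda_{k(f)})$. Since also $\lambda_{k(f)}\le|P|=\mathfrak c$, we get $\lambda_{k(f)}=\mathfrak c$. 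Applying this to each representative $f=e_k$ shows that every $\lambda_k=\mathfrak c$, so $P\cong\bigoplus_{k\le t}(Re_k)^{(\mathfrak c)}$.

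\textbf{Step 4: identify $P$ as free.} On the other hand,
\[
 R^{(\mathfrak c)}\cong\Bigl(\bigoplus_{i\le m}Re_i\Bigr)^{(\mathfrak c)}\cong\bigoplus_{k\le t}(Re_k)^{(\mathfrak c)},
\]
because each type occurs among the $e_i$ a finite nonzero number of times, and $n\cdot\mathfrak c=\mathfrak c$. Therefore $R^\omega\cong R^{(\mathfrak c)}$ is free.

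\textbf{Main obstacle.} The delicate point is Step 2. One must justify that every indecomposable projective is isomorphic to some $Re_k$, and that the multiplicities in the decomposition of $P$ are recovered from $P/JP$. The first fact comes from~\cite{Bass}. The second comes from projective covers: $Re_k\to S_k$ is a projective cover, direct sums of projective covers remain projective covers over a left perfect ring, and projective covers are unique. The remaining step is only a cardinality computation.
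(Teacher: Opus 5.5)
Your proof is correct, and it takes a different route from the paper's. The paper proves this Fact purely by citation. First, $\Sigma$-algebraic compactness of ${}_{R}R$ makes $R$ semiprimary \cite[Theorem~14.23]{prest_first_book}. Then O'Neill's freeness criterion \cite[Theorem~5.2]{oneill} applies directly, since $R$ is right coherent and the assumed quotients have size $|R|^\omega=\mathfrak{c}$.

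You instead reprove the countable-power case of O'Neill's criterion from scratch:
\begin{enumerate}[(1)]
\item Bass and Chase make $P=R^\omega$ projective.
\item Bass's structure theorem gives $P\cong\bigoplus_k(Re_k)^{(\lambda_k)}$.
\item The computation $e_k(P/JP)\cong\End(S_k)^{(\lambda_k)}$, with $\End(S_k)$ a countable division ring, shows that the hypothesized quotient measures exactly the multiplicity $\lambda_k$. This forces $\lambda_k=\mathfrak{c}$, and hence $R^\omega\cong R^{(\mathfrak{c})}$.
\end{enumerate}

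Your route is self-contained and explains why these cardinal quotients are the right invariants. It uses only left perfectness and Chase's theorem, where the paper passes through semiprimarity to meet O'Neill's hypotheses. It also needs the hypothesis only for the representatives $e_1,\dots,e_t$. The paper's citation is shorter and places the statement inside O'Neill's general criterion.

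The uniqueness of multiplicities raised in your ``main obstacle'' paragraph is not needed. Step~3 computes $f(P/JP)$ directly from whatever decomposition Bass's theorem supplies, so that decomposition already has every $\lambda_k=\mathfrak{c}$.
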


\begin{proof}
The $\Sigma$-algebraic compactness of ${}_{R}R$ implies that $R$ is \emph{semiprimary}: $R/J(R)$ is semisimple artinian and $J(R)$ is nilpotent~\cite[Theorem~14.23]{prest_first_book}. O'Neill's criterion~\cite[Theorem~5.2]{oneill} now applies, since $R$ is right coherent and the assumed cardinal quotients all have size $\mathfrak{c}=|R|^\omega$.
\end{proof}

Recall that $R$ is an $F$-ring if $R^\omega$ is free, and a $P$-ring if $R^\omega$ is projective.

\begin{corollary}\label{F_theorem}
For a countable ring $R$, the following are equivalent: $R$ is an $F$-ring; $R$ is a $P$-ring; and $R$ is left perfect and right coherent. In this case $R^\omega\cong({}_{R}R)^{(\mathfrak{c})}$.
\end{corollary}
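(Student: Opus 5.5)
We prove the cycle $F$-ring $\Rightarrow$ $P$-ring $\Rightarrow$ (left perfect and right coherent) $\Rightarrow$ $F$-ring. The first implication is trivial, since free modules are projective.

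For $P$-ring $\Rightarrow$ left perfect and right coherent, we would like to reuse Propositions~\ref{useful_prop} and~\ref{useful_prop_2}. They are stated for free modules, but their proofs use only two facts about the Polish module $A=R^\omega$ with its product topology. First, $A$ carries a Kaplansky decomposition whose summands are summands of ${}_{R}R$, so that DCC on the unlimited part transfers to ${}_{R}R$. Second, pp-formulae commute with direct sums along that decomposition.

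If $R^\omega$ is projective, Kaplansky's theorem gives a Kaplansky decomposition of $R^\omega$ into countably generated, hence countable, projectives. Theorem~\ref{main_th_sec} then writes $R^\omega=B\oplus U$ with $U$ $\Sigma$-algebraically compact and $B$ countable. Since $R^\omega\cong R^\omega\oplus R^\omega$ and $R^\omega\cong (R^\omega)^\omega$, the invariance statement should give $R^\omega\cong U$. More directly, $R^\omega$ is Polish unlimited by Proposition~\ref{discrete_product}, so $\Th(R^\omega)=\Th_{\Pun}(R^\omega)=\Th(U)$ is totally transcendental. Because pp-formulae commute with products, ${}_{R}R$ satisfies DCC on pp-definable subgroups. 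Bass's theorem (Fact~\ref{bass_perfect}) then gives left perfectness.

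For right coherence, we repeat the argument of Proposition~\ref{useful_prop_2} with $A=R^\omega$. Suppose $\grf({}_{R}R)=\bigcup_i I_i$ is a strictly increasing union of finitely generated right ideals. Then $q$ is consistent with $\Th(R^\omega)=\Th_{\Pun}(R^\omega)$, so Theorem~\ref{Thm_CH} gives $[\grf(A):\sum_i\psi_i(A)]=\mathfrak c$. We must contradict this. Here the product, unlike the direct sum, does not immediately collapse the index: $\grf(A)=\grf(R)^\omega$ while $\sum_i I_i^\omega$ is a proper subgroup.

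This is the main obstacle. We would use projectivity together with the perfectness already established. Over a left perfect ring, $R^\omega$ projective means it is a direct sum of indecomposable summands $Re$ of ${}_{R}R$, each with $\grf(Re)=\grf(R)e$. Along this decomposition, a direct-sum computation as in Proposition~\ref{useful_prop_2} yields $\grf(A)=\bigcup_i\psi_i(A)$. The key point is that pp-subgroups of a direct sum are direct sums, and each element has finite support in the direct-sum decomposition, even though not in product coordinates. This forces index $1$, a contradiction, and so $\grf(R)=I_i$ for some $i$.

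For left perfect and right coherent $\Rightarrow$ $F$-ring, left perfectness together with right coherence gives DCC on finitely generated right ideals, which are exactly the pp-definable subgroups of ${}_{R}R$ by Fact~\ref{coherence_pp}. Hence ${}_{R}R$ is $\Sigma$-algebraically compact; this is the Bass--Björk identification cited in the introduction. Lemma~\ref{third_condition_pleonastic} supplies the cardinal hypothesis for every primitive idempotent, so Fact~\ref{oneill_freeness} shows $R^\omega$ is free.

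Finally, $|R^\omega|=\mathfrak c$ with $R$ countable forces the rank to be $\mathfrak c$, so $R^\omega\cong({}_{R}R)^{(\mathfrak c)}$.
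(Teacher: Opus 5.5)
Your proof is correct, and your step from $P$-ring to left perfect matches the paper's (Kaplansky, the DCC criterion, Proposition~\ref{discrete_product}, Bass). Your other two steps take different routes.

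For right coherence, the paper argues directly. It packs an enumeration of $\grf({}_{R}R)$ into one element $a\in\grf(R^\omega)$ and pushes it along a split embedding $j$ into a free module. It then reads finitely many generators of $\grf({}_{R}R)$ off the finite support of $j(a)$. Your contradiction argument uses the same finite-support idea, but Theorem~\ref{Thm_CH} contributes nothing to it. You already note that $\bigcup_i I_i^\omega\subsetneq\grf(R)^\omega$, and this contradicts the direct-sum identity $\grf(A)=\bigcup_i\psi_i(A)$ outright. Left perfectness and the decomposition into modules $Re$ are also unnecessary here. For any direct summand $P$ of a free module $F$ one has $\grf(P)=\grf(F)\cap P$ and $\psi_i(P)=\psi_i(F)\cap P$, which gives the same identity.

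For the converse, the paper invokes Chase's theorem to make $R^\omega$ projective and then reruns its own $P\Rightarrow F$ argument. In that route, $\Sigma$-algebraic compactness of ${}_{R}R$ comes out of Theorem~\ref{first_theorem}. You instead obtain it directly from Bj\"ork's minimum condition on finitely generated right ideals plus coherence. This avoids Chase's theorem but imports a different external fact. That fact is that the DCC on pp-definable subgroups of ${}_{R}R$ in one free variable already yields $\Sigma$-algebraic compactness, whereas Definition~\ref{gava_ex} is phrased for all $n$. It is a standard theorem, and the paper's introduction asserts the resulting Bass--Bj\"ork identification, so it is a legitimate citation. You should still name it explicitly, since it is the one ingredient your route needs that the paper's does not.
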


\begin{proof}
Every $F$-ring is a $P$-ring. Suppose that $R$ is a $P$-ring and give $P=R^\omega$ its product Polish topology. By Kaplansky's theorem~\cite{Kap}, $P$ has a decomposition into countably generated, hence countable, summands. Theorem~\ref{first_theorem} gives a $\Sigma$-algebraically compact summand $U$ with $\Th(U)=\Th_{\Pun}(P)$. By Proposition~\ref{discrete_product}, $\Th_{\Pun}(P)=\Th(P)$, so $P$, and then its direct summand ${}_{R}R$, are $\Sigma$-algebraically compact. As in Proposition~\ref{useful_prop}, Bass's Theorem gives left perfectness.

\smallskip\noindent
To prove right coherence, fix a pp-formula $\grf(u)$ in one variable and enumerate $\grf({}_{R}R)=\{a_i:i<\omega\}$, allowing repetitions. Then $a=(a_i)_{i<\omega}\in\grf(P)$. Since $P$ is projective, choose homomorphisms $j\colon P\to F$ and $p\colon F\to P$ with $pj=1_P$, where $F$ is free with basis $(e_\lambda)_{\lambda\in\Lambda}$. Write $j(a)=\sum_{k=1}^t b_ke_{\lambda_k}$. The element $j(a)$ belongs to $\grf(F)$, so the direct-sum identity gives $b_k\in\grf({}_{R}R)$ for every $k$. Applying $p$ yields $a=\sum_{k=1}^t b_kp(e_{\lambda_k})$; in each coordinate this says $a_i\in\sum_{k=1}^t b_kR$. Hence $\grf({}_{R}R)=\sum_{k=1}^t b_kR$, since $\grf({}_{R}R)$ is a right ideal. Fact~\ref{coherence_pp} gives right coherence.

\smallskip\noindent
Lemma~\ref{third_condition_pleonastic} and Fact~\ref{oneill_freeness} now show that $P=R^\omega$ is free. Its cardinality is $\mathfrak{c}$, so, as $R$ is countable, its rank is $\mathfrak{c}$. This proves that a countable $P$-ring is an $F$-ring and is left perfect and right coherent. Conversely, Chase's theorem~\cite[Theorem~3.3]{chase} makes every product of projectives projective over a left perfect right coherent ring, in particular $R^\omega$. This proves all the equivalences.
\end{proof}

We can now prove the free-module characterization announced in the introduction.

\begin{restatement}{Theorem}{Free_Polish}
The following are equivalent for a countable ring $R$:
\begin{enumerate}[(1)]
	\item there exists an uncountable free $R$-module with a Polish topology;
	\item the ring $R$ is right coherent and left perfect;
	\item the class $\RProj$ of projective $R$-modules is closed under products; and
	
	\item $({}_{R}R)^{\gro} \isom ({}_{R}R)^{(\mathfrak{c})}$ is a free $R$-module.
	
\end{enumerate}
\end{restatement}

\begin{proof}
Propositions~\ref{useful_prop} and~\ref{useful_prop_2} prove (1)$\Rightarrow$(2). The equivalence (2)$\Leftrightarrow$(3) is Chase's theorem~\cite[Theorem~3.3]{chase}. If (3) holds, $R^\omega$ is projective, and Corollary~\ref{F_theorem} gives (4). Finally, the product of countably many copies of the discrete countable module ${}_{R}R$ is Polish. Transporting that topology along the isomorphism in (4) proves (1).
\end{proof}

\subsection{Uncountable projective Polish modules}\label{sec_projective}

Recall from Subsections~\ref{sec_ziegler} and~\ref{sec_endomorphisms} that an endofinite module has finite length over its endomorphism ring, and is therefore $\Sigma$-algebraically compact. An indecomposable endofinite module is pure-injective with local endomorphism ring. Lemma~\ref{tt_endofinite_point} supplies such a module in every nonempty totally transcendental closed subset of $\Zg(R)$. We first show how its countable power yields an uncountable Polish module.

\begin{lemma}\label{endofinite_power}
Let $R$ be a countable ring. If $V$ is a nonzero indecomposable endofinite $R$-module, then $V$ is countable and $V^\omega\cong V^{(\mathfrak{c})}$.

\end{lemma}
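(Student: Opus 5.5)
Countability and the isomorphism come from separate earlier results, and I will prove them in that order.

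\emph{Countability.} By the remarks in Subsection~\ref{sec_endomorphisms}, an endofinite module is $\Sigma$-algebraically compact, and an indecomposable endofinite module is pure-injective. Hence $V$ is an indecomposable pure-injective module, i.e.\ a point of $\Zg(R)$. By Fact~\ref{fact_closed_point}, the singleton $\{V\}$ is closed, and it is totally transcendental because $M_{\{V\}}=V$ is $\Sigma$-algebraically compact. Lemma~\ref{tt_closed_set} then says that every point of a totally transcendental closed set is countable, so $V$ is countable. Alternatively, one can repeat that lemma's argument directly: a countable elementary submodule of $V$ is $\Sigma$-algebraically compact, hence a pure, and therefore split, summand of the indecomposable $V$, so it equals $V$.

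\emph{The isomorphism.} I will feed $V^\omega$ into the Main Theorem~\ref{main_th_sec}, taking $\cC=\{V\}$, so that $M_{\cC}=V$. With the product of the discrete topologies, $V^\omega$ is a Polish $R$-module by Proposition~\ref{discrete_product}, and it is uncountable since $V\neq0$. Since $V^\omega$ is $\Sigma$-algebraically compact (pp-formulae commute with products, so the DCC passes from $V$ to $V^\omega$), Garavaglia's theorem gives a decomposition of $V^\omega$ into indecomposable, hence countable, summands. This is a Kaplansky decomposition. The Main Theorem then yields $V^\omega=B\oplus U$ with $B$ a countable bin and $U\cong(M_{\cC'})^{(\mathfrak{c})}\cong(M_{\cC'})^{\omega}$, where $\cC'=\Cl(\Th_{\Pun}(V^\omega))$.

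It remains to identify $\cC'$ and remove $B$. By Proposition~\ref{discrete_product}, $\Th_{\Pun}(V^\omega)=\Th(V^\omega)$, whose pp-invariants are $1$ or infinite. The same pp-pairs vanish on $V^\omega$ and on $V$. So by Fact~\ref{Ziegler_topology}, $\cC'=\Cl(\Th(V^\omega))=\Cl(V)$, and $\Cl(V)=\{V\}$ because $\{V\}$ is a closed set corresponding to the theory $\Th(V^\omega)$. Hence $M_{\cC'}=V$ and $U\cong V^{\omega}$. Equivalently, the last two paragraphs of the proof of Theorem~\ref{main_th_sec} applied with $P=V^\omega$ already give $V^\omega\cong V^{(\mathfrak{c})}$, and this is the step I would actually cite, since it avoids the bin.

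To finish, let $\lambda$ be the multiplicity of $V$ in the Garavaglia decomposition of $V^\omega$. The Main Theorem's counting via the index $[\grf_V:H_{\grf_V}]=\mathfrak{c}$ gives $\lambda=\mathfrak{c}$, and $\{V\}=\cC$ forces all summands to be copies of $V$. So $V^\omega\cong V^{(\mathfrak{c})}$.

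The main obstacle is the bookkeeping that $\Cl(V)=\{V\}$, namely that no other indecomposable pure-injective lies in the closure. This is exactly the closedness of the point $V$ from Fact~\ref{fact_closed_point}. Granting it, the rest is a direct application of the $P=(M_{\cC})^\omega$ part of the proof of the Main Theorem.
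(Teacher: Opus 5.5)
Your proof is correct, and your countability argument is the paper's (Fact~\ref{fact_closed_point} plus Lemma~\ref{tt_closed_set}). For the isomorphism, however, you take a much heavier route than needed.

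\textbf{The paper's route.} The paper never invokes the Main Theorem. It notes that $V^\omega$ is $\Sigma$-algebraically compact with $\Cl(V^\omega)=\{V\}$. Hence every summand in a Garavaglia decomposition of $V^\omega$ is isomorphic to $V$, and $V^\omega\cong V^{(\lambda)}$ for some cardinal $\lambda$. Since $V$ is countable and nonzero, $|V^\omega|=\mathfrak{c}$ while $|V^{(\lambda)}|\leq\max(\lambda,\aleph_0)$ and $\lambda\leq|V^{(\lambda)}|$. This forces $\lambda=\mathfrak{c}$.

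\textbf{Where your route diverges.} You actually reach $V^\omega\cong V^{(\lambda)}$ yourself. You need it, or Lemma~\ref{tt_closed_set}, to justify ``indecomposable, hence countable'' when you produce the Kaplansky decomposition. You also say explicitly that ``$\{V\}=\cC$ forces all summands to be copies of $V$''. At that point plain cardinality finishes the proof. Instead, you route the multiplicity count through Theorem~\ref{main_th_sec}. That means using Theorem~\ref{first_theorem}, Theorem~\ref{Thm_Second_qf}, and the analytic dichotomy (Pettis, Kuratowski--Ulam, Mycielski) to obtain $[\grf_V(V^\omega):H_{\grf_V}(V^\omega)]=\mathfrak{c}$.

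\textbf{Comparison.} Your argument is logically sound. The proof of Theorem~\ref{main_th_sec} does not use this lemma, so there is no circularity. The index argument is needed in the Main Theorem only when $\cC$ has several points. There, the total cardinality $\mathfrak{c}$ cannot separate the individual multiplicities. With the single point $V$ it buys nothing. The paper's version has the advantage of proving a purely algebraic statement without putting any Polish topology on $V^\omega$ at all.
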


\begin{proof}
By Fact~\ref{fact_closed_point}, $\Cl(V)=\{V\}$, and this singleton is totally transcendental. Lemma~\ref{tt_closed_set} therefore makes $V$ countable. Since pp-formulae commute with products, $V^\omega$ is $\Sigma$-algebraically compact and $\Cl(V^\omega)=\{V\}$. Garavaglia's decomposition theorem~\cite{garavaglia} expresses $V^\omega$ as a direct sum of indecomposable pure-injectives. Every such summand belongs to $\Cl(V^\omega)$, so every summand is isomorphic to $V$. Thus $V^\omega\cong V^{(\lambda)}$ for a cardinal $\lambda$. Because $V$ is nonzero and countable, $|V^\omega|=\mathfrak{c}$, which forces $\lambda=\mathfrak{c}$.
\end{proof}

\begin{lemma}\label{endofinite_projective_idempotent}
Let $R$ be a countable ring. Every nonzero indecomposable endofinite projective $R$-module is isomorphic to $Re$ for a primitive idempotent $e\in R$.
\end{lemma}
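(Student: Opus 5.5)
Let $V$ be a nonzero indecomposable endofinite projective $R$-module. The plan is to show first that $V$ is finitely generated, then identify it with a cyclic direct summand $Re$ of ${}_RR$, and finally deduce primitivity of $e$ from indecomposability.

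\emph{Step 1: $V$ is countably generated.} By Lemma~\ref{endofinite_power}, $V$ is countable; in particular it is countably generated. As $V$ is projective, write $V\oplus W\cong F$ for a free module $F=R^{(\Lambda)}$.

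\emph{Step 2: $V$ is finitely generated.} This is the main obstacle. Since $V$ is endofinite, $V_S$ has finite length, where $S=\End_R V$. I would consider the right $S$-submodules $\operatorname{tr}$-type subgroups $Ia\,S$, but more directly use pp-definability. For each finite tuple $\bfa$ of generators, the submodule $R\bfa\subseteq V$ is not obviously an $S$-submodule. Instead take the fully invariant submodules $N_F:=\sum_{f\in S} f(R F)$ for finite $F\subseteq V$. These are right $S$-submodules of $V$. They form an ascending chain exhausting $V$ when $F$ runs through an increasing enumeration of $V$. By the ACC on $V_S$, one $N_F$ equals $V$. Hence $V$ is generated by the images of a finitely generated submodule $RF$ under endomorphisms. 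To conclude finite generation I would use that $S$ is local (indecomposable endofinite modules have local endomorphism rings). Since $V$ is projective, choose a finitely generated free module $R^n$ mapping onto $RF$, composed with the inclusion $RF\hookrightarrow V$. Then the map $\bigoplus_{f} R^n\to V$ is surjective. Projectivity of $V$ gives a splitting, so $V$ is a summand of a direct sum of copies of the finitely generated module $R^n$. A Kaplansky-type/Crawley--J{\o}nsson--Warfield exchange argument, using that $V$ has local endomorphism ring, makes $V$ isomorphic to a direct summand of a single finite sum $R^n$. Concretely, the exchange property of modules with local endomorphism rings implies that $V$ is isomorphic to a summand of one of the summands $R^n$. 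Refining further to one indecomposable summand of $R^n$, $V$ becomes a direct summand of ${}_RR$ itself. The exchange of a local-endomorphism-ring summand into a finite decomposition $R^n=R\oplus\cdots\oplus R$ gives this, as in the Krull--Schmidt theory of~\cite[\S V.5]{stenstrom}.

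\emph{Step 3: idempotent.} Every direct summand of ${}_RR$ has the form $Re$ with $e^2=e$, obtained as the image of $1$ under the projection. If $e=e_1+e_2$ with $e_1,e_2$ nonzero orthogonal idempotents, then $Re=Re_1\oplus Re_2$ with both summands nonzero. This contradicts the indecomposability of $V\cong Re$. Hence $e$ is primitive.

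I expect Step 2 to be delicate. The fallback is to avoid the trace argument: write $V\oplus W=R^{(\omega)}$ and pick $0\neq v\in V$. Since $\End V$ is local, the exchange property of $V$ applied to the decomposition of $R^{(\omega)}$ into copies of $R$ yields $V\cong$ a summand of $R$ directly. This gives the claim without using endofiniteness beyond locality of $S$.
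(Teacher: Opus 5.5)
Your proposal is correct, but it gets finite generation from a heavier black box than the paper does.

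\textbf{The paper's route.} The paper argues by hand. With $p\colon R^{(I)}\to V$ split by $s$, it picks $0\neq a\in V$ and a finite $J\subseteq I$ with $s(a)\in R^{(J)}$. The composite $f=p'\pi s$ (with $\pi$ the projection onto $R^{(J)}$ and $p'$ the restriction of $p$) fixes $a$, so $1-f$ is not invertible. Locality of $\End_R V$ then makes $f$ invertible, and $V$ is a direct summand of the finitely generated free module $R^{(J)}$. After that, only the finite Krull--Schmidt lemma~\cite[Lemma~V.5.2]{stenstrom} is needed to reach $V\cong Re$.

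\textbf{Your route.} You instead rely on Warfield's theorem that a module with local endomorphism ring has the exchange property for arbitrary (infinite) direct-sum decompositions. Apply it to $V'\oplus W=R^{(I)}$ with $V'\cong V$. It yields summands $X_i'$ of the copies $X_i\cong R$ with $R^{(I)}=V'\oplus\bigoplus_i X_i'$. Hence $V$ is isomorphic to the direct sum of complements of the $X_i'$ in the $X_i$, and indecomposability leaves a single nonzero term. This is valid, and your ``fallback'' is the clean form of it.

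\textbf{Two corrections.} First, the result you need is Warfield's exchange theorem, not the Crawley--J\'onsson--Warfield theorem. The latter requires the ambient summands (here copies of ${}_RR$ or $R^n$) to have local endomorphism rings, which they generally do not. Second, the $N_F$/ACC detour in your Step~2 adds nothing. Every projective module is already a summand of a direct sum of copies of the finitely generated module $R$, so all the work is done by the exchange property.

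\textbf{Comparison.} The paper's argument is in effect a self-contained proof of exactly the special case of exchange needed here. Your version is shorter but imports a substantial theorem. Both arguments use only that $V\neq0$ has local endomorphism ring, not endofiniteness or countability as such. Your Step~3 (summands of ${}_RR$ are $Re$, and indecomposability gives primitivity) is fine.
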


\begin{proof}
The idea of the proof follows that of the Krull-Schmidt-Azumaya Theorem~\cite[\S V.5]{stenstrom}.
Let $V$ be such a module and consider an epimorphism $\xymatrix{R^{(I)} \ar@{->>}[r]^p & V}$ from a free module. Because ${_R}V$ is projective, there is a section $s \colon V \to R^{(I)}$, with $ps = 1_V$ so that $V$ is isomorphic to a summand of $R^{(I)}.$ We will use the fact that $S=\End_R \,V$ is a local ring to show that $V$ is in fact a summand of $R^{(J)}$ for some finite $J \subseteq I.$ The lemma then follows from~\cite[Lemma~V.5.2]{stenstrom}, which implies that $V$ is isomorphic to a summand of ${_R}R.$

\smallskip \noindent 
Let $a \in V$ be nonzero and take a finite $J \subseteq I$ with $s(a) \in R^{(J)}.$ Then the composition 
$$\xymatrix{f \colon V \ar[r]^s & R^{(I)} \ar[r]^{\pi} & R^{(J)} \ar[r]^-{p'} & V}$$ is an endomorphism $f \in S$ that fixes $a.$ Here $\pi \colon R^{(I)} \to R^{(J)}$ is the structural projection and $p' \colon R^{(J)} \to V$ is the restriction of $p.$ Because $(1-f)(a) = 0,$ we see that $1-f \in S$ is not an invertible endomorphism of $V.$ But then $f \in S$ must be invertible and $p' \colon R^{(J)} \to V$ is an epimorphism, as required.

\end{proof}

We can now prove the announced characterization of countable rings admitting an uncountable projective Polish module. Recall that a ring is \emph{right artinian} if its right ideals satisfy the DCC. The Hopkins--Levitzki theorem~\cite[Theorem~4.15]{Lam} says that a right artinian ring is right noetherian. Thus its right regular module has both the ACC and the DCC, equivalently finite length; conversely, finite length implies the DCC.

These considerations imply that every countable right artinian ring $R$ is a left $F$-ring. Indeed, every finitely generated right $R$-module $M$ is a quotient of a finite-length module $(R_R)^n$. The kernel has finite length and is therefore finitely generated, so $M$ is finitely presented. In particular, $R$ is right coherent. The DCC on right ideals implies the DCC on principal right ideals, so Bass's theorem (Fact~\ref{bass_perfect}) makes $R$ left perfect. Since $R$ is countable, Corollary~\ref{F_theorem} now gives $R^\omega\cong R^{(\mathfrak c)}$, proving the claim.

\begin{restatement}{Theorem}{Polish projective existence}
The following are equivalent for a countable ring $R$:
	\begin{enumerate}[(1)]
		\item there exists an uncountable Polish projective $R$-module;
		\item there is an irreducible idempotent $e \in R$ such that the projective module $Re$ is of finite length as a right $eRe$-module, with the action given by multiplication;
		\item there is an irreducible idempotent $e \in R$ such that the corner ring $eRe$ is right artinian and the right $eRe$-module $Re$ is of finite length; and
		\item there is a nonzero idempotent $e \in R$ such that the corner ring $eRe$ is a left $F$-ring and $Re$ is a finitely presented right $eRe$-module.
	\end{enumerate}
\end{restatement}

\begin{proof}
Suppose (1) holds, and let $A$ be an uncountable projective Polish $R$-module. Kaplansky's theorem gives a decomposition of $A$ into countable summands. Apply Theorem~\ref{main_th_sec} to obtain $A=B\oplus U$ and the nonempty totally transcendental closed set $\cC=\Cl(\Th_{\Pun}(A))$. Lemma~\ref{tt_endofinite_point} gives an endofinite point $V\in\cC$. By the Main Theorem, $V$ occurs as a direct summand of $U$, and is therefore projective. Lemma~\ref{endofinite_projective_idempotent} yields $V\cong Re$ for a primitive idempotent $e\in R$.

\smallskip\noindent
With our convention on composition, $\End_R(Re)$ is identified with $eRe$ by sending $s\in eRe$ to right multiplication by $s$. Thus endofiniteness of $V$ says precisely that $Re$ has finite length as a right $eRe$-module. This proves (2).

\smallskip\noindent
Now assume (2) with the aim of proving (3) and decompose the right $eRe$-module as a direct sum $Re=eRe\oplus(1-e)Re$. Thus $Re$ has finite length if and only if both summands do. By Hopkins--Levitzki~\cite[Theorem~4.15]{Lam}, the first summand has finite length if and only if the ring $eRe$ is right artinian, which proves (3). In these conditions, $Re$ is also pure-injective, so its endomorphism ring $eRe$ is local.

\smallskip \noindent
To prove that (3) implies (4), use the fact that the corner ring $eRe$ is countable and right artinian, so the paragraph preceding the theorem makes it a left $F$-ring. The right $eRe$-module $Re$ has finite length, so it is finitely generated and hence finitely presented by the same argument. This proves (4). 

\smallskip \noindent Finally, to see that (4) implies (1), use the isomorphism $(eRe)^{\gro} \isom (eRe)^{({\frak c})}$ provided by Corollary~\ref{F_theorem}, since $eRe$ is a countable left $F$-ring, and let us apply the functor $Re \tensor_{eRe} -.$ Because $Re$ is a finitely presented right $eRe$-module, the functor $Re \tensor_{eRe} -$ is finitely presented~\cite[Theorem 10.2.36]{prest_second_book} and therefore respects direct products~\cite[Proposition 18.1.17]{prest_second_book}. We also use the isomorphism $Re\tensor_{eRe}eRe\to Re$ given by $x\tensor a\mapsto xa$, with inverse $x\mapsto x\tensor e$. We obtain
\begin{eqnarray*}
	(Re)^{\gro} \isom (Re \tensor_{eRe}\, eRe)^{\gro} & \isom & Re \tensor_{eRe}\, (eRe)^{\gro} \isom Re \tensor_{eRe} (eRe)^{({\frak c})} \\ 
	& \isom & (Re \tensor_{eRe} eRe)^{({\frak c})} \isom (Re)^{({\frak c})},
\end{eqnarray*}
the second-to-last isomorphism coming from the general property that tensoring respects direct sums. Since $Re$ is nonzero and countable, $(Re)^{\gro}$ with its product topology is an uncountable Polish module. The displayed isomorphism makes it projective, since $Re$ is a direct summand of ${}_R R$. 
\end{proof}

\begin{remark}
For a countable ring $R$, the existence of a closed point in $\Cl({}_{R}R)$ alone does not characterize the rings in Theorem~\ref{Polish projective existence}. For example, $\mathbb Q$ is a closed endofinite point in $\Cl({}_{\mathbb Z}\mathbb Z)$: for every pp-pair $\grf/\psi$, localization carries the equality $\grf(\mathbb Z)=\psi(\mathbb Z)$ to $\grf(\mathbb Q)=\psi(\mathbb Q)$. However, every projective $\mathbb Z$-module is free, and Theorem~\ref{Free_Polish} excludes uncountable projective Polish $\mathbb Z$-modules. In the proof above, projectivity follows from occurrence as an actual summand of $U$.
\end{remark}
\medskip

\begin{example}~\cite{pmcohn}
	P.M.\ Cohn, in his solution to Artin's Problem, constructed an extension of countable division rings $D \leq \grD$ so that the right vector space $\grD_D$ is two-dimensional, while the left vector space ${_D}\grD$ is infinite-dimensional. Consider the matrix ring 
	$$R =  \left\{ \left(\begin{array}{cc} d & \delta \\ 0 & d \end{array} \right) \; \colon \;\; d \in D, \; \grd \in \grD \right \}.$$ 
	This is a local ring, whose maximal ideal is its Jacobson radical 
	${\dis J = \left(\begin{array}{cc} 0 & \Delta \\ 0 & 0 \end{array} \right)},$ $R/J \isom D.$
	
	\smallskip \noindent Because $J^2 = 0,$ both ${_R}R$ and $R_R$ are $\Sigma$-algebraically compact~\cite[Proposition 1.1.6]{prest_second_book}), so that $R$ is both left and right perfect. Let us show that $R$ is right coherent but not left coherent, from which it will follow that it is an example of a left $F$-ring that is not a right $F$-ring. This means that even though the objects appearing in the isomorphism $R^{\gro} \isom R^{({\frak c})}$ of left $R$-modules are both $R$-$R$-bimodules, the isomorphism is decidedly only {\em left} $R$-linear.
	
	\smallskip \noindent Indeed, the right ideals of $J = \grD$ correspond to its right $D$-subspaces, so that the length of $J_R$ is $2.$ This implies that the length of $R_R$ is 3, and that the ring $R$ is right artinian, and therefore right coherent. Because ${_R}R$ is indecomposable and endofinite, it is a closed point so that $\Cl ({_R}R) = \{ {_R}R \}$ is singeleton. The left ideals of $J,$ on the other hand, correspond to the left $D$-subspaces of $\grD,$ from which it follows that ${_R}J$ is not finitely generated, and that the unique simple left $R$-module ${_R}D = R/J$ is not finitely presented. As any simple left ideal of $R$ is isomorphic to ${_R}D,$ it is finitely generated but not finitely presented, and $R$ is not left coherent. 
	
	\smallskip \noindent Another way of seeing that $R$ is not left coherent is by observing that $J = \gra (R_R)$ is pp-definable by the quantifier-free formula $\gra (u) = ua \doteq 0,$ $a \in J$ nonzero, which is not equivalent, relative to $\Th (R_R),$ to any divisibility formula $I|u$ in the language of right $R$-modules: such formulae can only define in $R_R$ a finitely generated left ideal of $J,$ corresponding to a finite dimensional left $D$-subspace of $\grD.$ The pp-type 
	$$\{ \gra (u) \} \cup \{ \neg \, (I|u) \; \colon \;\; I \leq {_D}\grD \; \mrm{finite}{\rm -}\mrm{dimensional} \}$$ is realized by any nonzero element of the simple right $R$-module $D_R \isom R/J,$ which serves as its pure-injective hull. It follows that the totally transcendental closed set $\Cl (R_R) = \{ R_R, D_R \}$ has as its endofinite point $D_R$ and that there is a Kaplansky decomposition of the uncountable Polish module
	$(R_R)^{\gro} \isom (R_R)^{({\frak c})} \oplus (D_R)^{({\frak c})}.$
	Furthermore, the right module $D_R \in \Cl (R_R)$ is not flat, for it were, then~\cite[Theorem 2.3.9]{prest_second_book} would imply
	$D_R = \gra (D_R) = D_R \gra (R_R) = D_R J = 0.$\end{example}

\subsection{Existence of Kaplansky decompositions}\label{sec_kaplansky_existence}

The endolength material in Subsection~\ref{sec_endomorphisms} now gives the general existence criterion: Lemma~\ref{tt_endofinite_point} extracts an endofinite point from the closed set supplied by the Main Theorem, and Lemma~\ref{endofinite_power} constructs the required Polish module from that point.

\begin{restatement}{Theorem}{Polish existence}
Let $R$ be a countable ring. There exists an uncountable Polish left $R$-module with a Kaplansky decomposition if and only if there exists a nonzero indecomposable endofinite left $R$-module. Equivalently, if and only if there exists an indecomposable endofinite right $R$-module.
\end{restatement}

\begin{proof}
Suppose that an uncountable Polish $R$-module $A$ has a Kaplansky decomposition. Its unlimited summand gives a nonempty totally transcendental closed set $\cC$ by Theorem~\ref{main_th_sec}. Lemma~\ref{tt_endofinite_point} then supplies a nonzero indecomposable endofinite $R$-module.

\smallskip\noindent
Conversely, let $V$ be such an endofinite module. By Lemma~\ref{endofinite_power}, it is countable and $V^\omega\cong V^{(\mathfrak{c})}$. Thus $V^\omega$, equipped with its product Polish topology, is uncountable and admits a Kaplansky decomposition. By Proposition~\ref{endofinite_left_right}, $\Zg(R)$ contains an endofinite point if and only if $\Zg(R^{\op})$ does. Thus existence of an indecomposable endofinite $R$-module is equivalent to existence of an indecomposable endofinite right $R$-module, proving the last equivalence in the statement. Applying the construction above to the countable ring $R^{\op}$ also gives an uncountable Polish right $R$-module with a Kaplansky decomposition.
\end{proof}

Recall that the unlimited part $T_u$ of a complete theory $T$ of left $R$-modules was defined in~\eqref{def_unlimited_theory} in the introduction; see also~\cite[\S~4.5]{prest_first_book}. If $A$ is a Polish module and $T=\Th(A)$, then
\[
\Cl(\Th_{\Pun}(A))\subseteq\Cl(T_u)\subseteq\Cl(T)=\Cl(A).
\]
Indeed, $T_u$ imposes $\grf=\psi$ for every pp-pair whose index is finite in $T$, while $\Th_{\Pun}(A)$ imposes this equality whenever the index in $A$ is countable, by~\eqref{def_polish_unlimited_theory}. Thus every pp-equality imposed by $T_u$ is imposed by $\Th_{\Pun}(A)$, giving the first inclusion. Every pp-equality imposed by $T$ is imposed by $T_u$, giving the second. 

\begin{restatement}{Corollary}{theory_polish_existence}
	Let $T$ be a complete first-order theory of modules over a countable ring $R$. Then $T$ has an uncountable Polish model admitting a Kaplansky decomposition if and only if there exists a nonzero indecomposable endofinite $R$-module $V \in \Cl (T_u) \subseteq \Zg (R).$ In that case, $B\oplus V^\omega \models T$ is such a Polish model for every countable $B\models T$.
\end{restatement}

\begin{proof}
Let $A\models T$ be an uncountable Polish module with a Kaplansky decomposition. By the Main Theorem, $A = B \oplus M_{\cC}^{(\mathfrak{c})}$ for the nonempty totally transcendental closed set $\cC=\Cl(\Th_{\Pun}(A))$. Lemma~\ref{tt_endofinite_point} supplies an endofinite point 
$V \in \Cl(\Th_{\Pun}(A)) \subseteq \Cl(T_u).$

\smallskip \noindent Conversely, let $V \in \Cl(T_u)$ be endofinite and choose a countable model $B\models T$. By Lemma~\ref{endofinite_power}, $V$ is countable and $V^\omega\cong V^{(\mathfrak{c})}$. Hence $A=B\oplus V^\omega$ is an uncountable Polish module with a Kaplansky decomposition. For every pp-pair $F$, we have $F(A)\cong F(B)\oplus F(V)^{\omega}$. If $F(B)$ is finite, then $V \in \Cl(T_u)$ gives $F(V)=0$, so $F(A)$ has the same finite order. If $F(B)$ is infinite, so is $F(A)$. The Baur--Monk theorem therefore gives $A\equiv B$, and hence $A\models T$.
\end{proof}

\end{document}